\def\tank#1{\protected@xdef\@thanks{\@thanks
 \protect\footnotetext[0]{#1}}}
\def\bigfoot{

 \@footnotetext}
\newcommand{\ea}{\end{array}}
\newtheorem{theorem}{Theorem}[section]
\newtheorem{lem}{Lemma}[section]
\newtheorem{prp}[theorem]{Proposition}
\newtheorem{thm}[theorem]{Theorem}
\newtheorem{remark}{Remark}[section]
\def\beq{\begin{equation}}
\def\nneq{\end{equation}}
\def\bthm{\begin{thm}}
\def\nthm{\end{thm}}
\def\Rd{\mathbb{R}^d}
\def\R{\mathbb{R}}
\def\Rdd{\mathbb{R}^{d+1}}
\def\a{\alpha}
\def\d{\delta}
\def\t{\theta}
\def\g{\gamma}
\def\G{\Gamma}
\def\c{\cdot}
\def\p{\partial}
\def\les{\lesssim}
\def\o{\omega}
\def\O{ \mathcal{O}}
\def\l{\lambda}
\def\e{\varepsilon}
\title{Strong Solutions to Reflecting Stochastic Differential Equations with Singular Drift}
\thanks{yangss@mail.ustc.edu.cn},\ \
\thanks{Tusheng.Zhang@manchester.ac.uk}\\
\date{}
\newenvironment{proof}{\par\noindent{\bf Proof:}}{\hspace*{\fill}$\blacksquare$\par}
\begin{document}
\maketitle
\noindent \textbf{Abstract:}
In this paper, we prove that there exists a unique strong solution to  reflecting stochastic differential equations with merely measurable drift giving an affirmative answer to the longstanding problem. This is done
through Zvonkin transformation and a careful analysis of the transformed reflecting stochastic differential equations on non-smooth  time-dependent domains.


\vspace{4mm}


\vspace{3mm}
\noindent \textbf{Key Words:} strong solution; reflecting stochastic differential equations; singular drift; Zvonkin transformation; time-dependent domain.

\numberwithin{equation}{section}
\vskip 0.3cm
\noindent \textbf{AMS Mathematics Subject Classification:} Primary 60H10. Secondary 60J60.
\tableofcontents
\section{Introduction}
\indent
Let $D$ be a bounded domain in $\mathbb{R}^d$ with a $C^3$ boundary $\partial D$ and $b(t,x)$ a measurable $\Rd$-valued function bounded on $[0,T]\times D$ for every $T>0$. In this paper, we are concerned with the strong solutions to  reflecting stochastic differential equations (SDEs) on the domain $D$ with the singular drift $b$. The purpose  is to give an affirmative answer to the longstanding problem of the existence and uniqueness of strong  solutions.

More rigorously, given a probability space $(\Omega, \mathcal{F}, (\mathcal{F})_{t\ge 0}, P)$ satisfying the usual assumptions and a $d$-dimensional standard Bronwian motion $W_t, t\geq 0$ on the probability space. Denote by $n(x)$ the unit inward  normal to the boundary $\partial D$. We aim to  show that for any $x\in \bar D$, there exists a unique pair of continuous adapted processes $(X_t,L_t)$ solving the  reflecting stochastic differential equation below, namely,  $X_t \in \overline{D}$ for all $t \ge 0$, $P$-$a.e.$, $L_t$ is a continuous process of bounded variation  with values in $\R^d$, and the following equation holds:

\begin{align} \label{1.1}
\left\{
\begin{aligned}
& X_{t}=x+W_{t}+\int_{0}^{t} b (s, X_{s} ) d s+L_{t}, \\
& |L|_{t}=\int_{0}^{t} I_{ \{X_{s} \in \partial D \}} d |L|_{s}, \\
& L_{t}=\int_{0}^{t} n(X_{s}) d |L|_{s}, \\
\end{aligned}
\right.
\end{align}
where $|L|_t$ is the total variation of $L_t$.

\vskip 0.5cm

Reflecting SDEs have been investigated by many authors when the coefficients are smooth /lipschitz. H. Tanaka in \cite{Tanaka} obtained the strong solutions of the reflecting SDEs in a convex domain based on solving the corresponding Skorokhod problem. P.L. Lions and A.S. Sznitman in \cite{Lions} studied  the reflecting SDEs by a penalized  method in a  $C^3$-domain. P. Dupuis and H. Ishii in \cite{Dupuis3} obtained  the existence and uniqueness of the strong solutions to reflecting SDEs in a general domain, which only requires the directions of reflection to be $C^2$.
In one-dimensional case, T.S. Zhang in \cite{Zhang} obtained the strong solution to reflecting SDEs with
locally bounded drifts using crucially  the comparison theorem. In multidimensional case,
P. Mar\'{\i}n-Rubio and J. Real in \cite{Marin} obtained the strong solutions to
reflecting SDEs when the drifts satisfy a certain monotonicity condition.

\vskip 0.4cm

On the other hand, strong solutions have been studied by many people for  stochastic differential equations with singular drift. In the celebrated work \cite{Zvonkin},  Zvonkin introduced a quasi-isometric transformation of the phase space that can convert a stochastic differential equation with a non-zero singular drift into a SDE without drift. This method is now called Zvonkin transformation. There are many papers (particularly in recent years) devoted to extending the  Zvonkin transformation in various ways to obtain the strong solutions of stochastic differential equations with  singular coefficients. We mention  \cite{Gyongy}, \cite{Krylov2}, \cite{Veretennikov}, \cite{ZhangX2} and \cite{ZhangX1}.

\vskip 0.5cm

The purpose of this paper is to establish the existence and uniqueness of strong solutions of reflecting SDEs with drifts which are merely measurable. The existence of weak solutions of the reflecting SDE (\ref{1.1}) is clear by using the Girsanov transform. To get the pathwise strong solution, the key is to prove the pathwise uniqueness of the equation (\ref{1.1}). Because of the singularity of the drifts we could not rely on solving  the deterministic Skorohod problem, see for instance \cite{Dupuis3} and \cite{Lions}. We will use the Zvonkin transformation. However difficulties immediately arise. Zvonkin transformation maps the domain $D$ into a family of time-dependent domains which are not as regular as the original one. Thus, after the transformation we are bound to  establish the pathwise uniqueness of  reflecting SDEs in  time dependent, non-smooth domains. Moreover, the reflecting directions of the  transformed process are not as smooth as the original inward normal and also the coefficients of the transformed reflecting SDEs are not Lipschitz. The existing results on reflecting SDEs in time dependent domains can not be applied. A large part of our work is  to carry out a careful analysis of  the transformed, time dependent domains and the time dependent reflecting directions to establish the necessary regularities required. To get the pathwise uniqueness, eventually we also need to construct a family of auxiliary test functions. This is done  in a similar way as that in  \cite{Dupuis3} and \cite{Lundstrom}.

\vskip 0.5cm
Throughout this paper, we assume $b(t,x)$ is bounded on $[0,T] \times D$ for every $T>0$.

\vskip 0.5cm

 Now we describe the content and organization of the paper in more details. In Section 2, we consider the following parabolic partial differential equation (PDE) associated with the singular drift on the domain $(0, T) \times D$, equipped with the Neumann boundary condition:
 \begin{align}  \nonumber
 \left\{
\begin{aligned}
& \partial_{t} u^T(t,x)+\frac{1}{2} \Delta_{x} u^T(t,x)+b(t,x) \cdot \nabla_{x} u^T(t,x)=0,&&\forall (t,x) \in (0, T) \times D, \\
&  \frac{\partial u^T}{\partial n}(t, x)=n(x),&&\forall (t,x) \in (0, T) \times \partial D, \\
& u^T(T, x)=x, && \forall x \in D.\\
\end{aligned}
\right.
\end{align}
 We provide regularities of the solution $u^T(t,x)$, which will be used in subsequent sections. Especially, we show that there exists an open set $G  \supset \bar D$ such that the extension of $u^T(t,\c)$ on $G$ is a homeomorphism for $t \in [0,  T]$ and $\tilde u^T(t,x):=(t, u^T(t,x))$ is an open mapping on $(0,T)\times G$.
\vskip 0.4cm

 In Section 3,  we study the time dependent domains $u^T(t, D), \ t \in [0, T]$, the images of  domain $D$ under the solution mappings $u^T(t, \cdot)$, $t\in [0, T]$. Among other things, we showed that the domains $u^T(t, D)$ satisfy the exterior and interior cone conditions when $T$ is sufficiently small.  Regularities of the time dependent vector fields, $\gamma(t,x):=n((u^T)^{-1}(t,x))$,  of the reflecting directions are also established. Here $(u^T)^{-1}(t,x)$ denotes the inverse function of $u^T(t,x)$.
\vskip 0.4cm
 In Section 4, we consider the flows associated with the time dependent vector fields of reflecting directions:
  \begin{align} \nonumber
 \left\{
\begin{aligned}
& y(t, x, 0)=x,  \\
&  \partial_{r} y(t,x, r)=\g(t, y(t, x,r)), \ r \in \R.
\end{aligned}
  \right.
\end{align}
We will provide a number of regularity results of the hitting times $\Gamma(t,x)$ of the flows on certain hyperplane. These hitting times will be used to construct test functions for proving the pathwise uniqueness of the transformed reflecting SDEs. Roughly speaking, since $\g$ only belongs to some Sobolev space on $\tilde D:=\tilde u^T((0,T)\times D)$, to ensure the regularity of the hitting times $\Gamma(t,x)$, we need to prove that if $(t,x) \in \tilde D$, then $(t,y(t,x,r))$ lies in $\tilde D$ before $y(t,x,r)$ hits the hyperplane ($i.e.$ for $r \in (0,\G(t,x)]$). At the end of this section, we will establish some smooth approximations of $\G(t,x)$,  which will be used later to show that $y(\c,\c ,\Gamma(\c,\c))$ belongs to some Sobolev space on $\tilde D$.
\vskip 0.4cm

In Section 5, a family of auxiliary functions is constructed. We first construct the functions locally in some neighborhoods of the points on the boundary of the domain $\tilde D$ and then piece them together through a finite cover of the boundary. These test functions will be used to prove the pathwise uniqueness of the solutions of reflecting stochastic differential equations. We also introduce the stochastic Gronwall's inequality and Krylov's estimate.
\vskip 0.4cm

In Section 6, we will establish the existence and uniqueness of strong solutions to the reflecting SDEs (\ref{1.1}). The existence of a weak solution follows from the Girsanov theorem. The strong solution is obtained by proving the pathwise uniqueness of the solutions. To this end, we first establish a generalized It\^o's formula for the solution $X_t$ of the  reflecting SDEs using the Krylov's estimate. Then we will use the auxiliary functions to eliminate the local times of the transformed processes $u^T(t,X_t)$. Finally, with the help of the stochastic Gronwall's inequality, the pathwise uniqueness is proved for the transformed processes and hence the pathwise uniqueness of the solutions $X_t$ follows.
\vskip 0.4cm
The last part of the paper is  the appendix which provides the  proofs for some of the results in Section 4.

\vskip 0.5cm

We close this introduction by mentioning some conventions used throughout this
paper: $|\cdot|$ or $d(\cdot,\cdot)$ denotes the Euclidean norm in $\Rd$. $\c$ denotes the inner product in $\Rd$. Use $B(x,r)$ to denote the ball in $\Rd$ centered at $x$ with radius $r$, and use $n^i(x)$ to denote the $i$-th component of the unit inward  normal $n(x)$ for $1 \le i \le d$.
For $d \times d$ matrix $A$, we use $|A|$ to denote the determinant of $A$ and define $\|A\|_2:=\sup_{x\in B(0,1)}|Ax|$,
$\|A\|:=\sup_{1\le i,j\le d}|a_{ij}|$.
Let $D_xf(x)$ stand for the vector $(\p_{x_1}f(x),...,\p_{x_d}f(x))$ if $f:\R^d \to \R$, and $D_xf(x)$ stand for the Jacobian matrix of $f$ if $f:\R^d \to \R^d$. For $x \in \Rd$, a unite vector $\gamma$ in $\Rd$ and $\theta,r>0$, define $C(x, \gamma, \theta ,r):=\{y\in \Rd: 0<|y-x|<r \ \mbox{and} \ (y-x)\cdot \gamma > \cos \theta |y-x|\}$.
For $-\infty<b<a<\infty$, we stile use the symbol $(a,b]$ to stand for $\{t\in \Rd:b\le t<a\}$
when there is no danger of causing ambiguity. For an open set $O \subset  \Rd $ and measurable function $f: O \rightarrow \Rd$, denote $\| f\|_{L^{2d+2}(O)}:=\| |f| \|_{L^{2d+2}(O)}$.
For a bounded domain $\O \subset (0,T)\times \Rd$, $W_{2d+2}^{1,2}(\O)$ ($W_{2d+2}^{1,2}(\O; \Rd)$) is a Sobolev space of functions $f(t,x)$ (with value in $\Rd$) such that
$$\|f\|_{W_{2d+2}^{1,2}(\O)}:=\|f\|_{L^{2d+2}(\O)}+\| \p_tf(t,x)\|_{L^{2d+2}(\O)}+\sum_{1\le i,j \le d}\|\p_{x_i}\p_{x_j}f(t,x)\|_{L^{2d+2}(\O)}<\infty.$$
Where $\p_tf(t,x)$ stands for the first order weak derivative with respect to ($w.r.t.$) $t$ and $\p_{x_i}\p_{x_j}f(t,x)$ stands for the second order weak derivative $w.r.t.$ $x$.
In the sequel, we will also write $W_{2d+2}^{1,2}(\O)$ for $W_{2d+2}^{1,2}(\O; \Rd)$ with no danger of ambiguity.
$c$ will denote a generic positive constant which may be different from line to line, and $a \les b$ means $a \le cb$ for some unimportant $c>0$.


\section{Parabolic PDEs associated with the singular drift}

In this section, we consider parabolic PDEs associated with the singular drift on the domain $D$, equipped with the Neumann boundary condition. We will provide some results on the regularity of the solutions, which will be used in subsequent sections.

\vskip 0.4cm

Since $\partial D \in C^3$ satisfies a uniform interior sphere condition and a uniform exterior sphere condition, we can find a positive constant $\d_0$ such that for each point $y \in \partial D$ there exist balls $B$ and $B'$, with the radii being  bounded from below by $\d_0$, satisfying $\overline{B} \cap D^c=\overline{B'} \cap \overline D=\{y\}$. Set $\G_{c}:=\{x\in \Rd: d(x, \p D)<c\}$ for $c>0$.
Following the argument of Lemma 14.16 in \cite{Gilbarg}, we see that for any $x \in \G_{\delta_0}$, there exists a unique $\varphi(x) \in \partial D$ such that $|x-\varphi(x)|=d(x, \p D)$, moreover $\varphi\in C^2(\G_{\delta_0})$. Thus we can extend $n(x)$ to the whole space $\Rd$ such that $n \in C_0^2(\Rd)$ with $|n(x)| \le 1$ on $\Rd$ and $|n(x)|= 1$ on $\G_{\frac{\delta_0}{2}}$ (for example, take $n(x):=n(\varphi(x))\phi(x)$ for some real function $\phi \in C_0^2(\G_{\delta_0})$ with $\phi(x)= 1$ on $\G_{\frac{\delta_0}{2}}$).

\vskip 0.4cm

From Chapter 4 (Section 9) in \cite{Lady}, it is known that for any $T>0$, there exists a unique weak solution $u^T\in W_{2d+2}^{1,2}((0, T) \times D)$ to the following boundary value problem:
\begin{align}\label{1}
 \left\{
\begin{aligned}
& \partial_{t} u^T(t,x)+\frac{1}{2} \Delta_{x} u^T(t,x)+b(t,x) \cdot \nabla_{x} u^T(t,x)=0,&&\forall (t,x) \in (0, T) \times D, \\
&  \frac{\partial u^T}{\partial n}(t, x)=n(x),&&\forall (t,x) \in (0, T) \times \partial D, \\
& u^T(T, x)=x, && \forall x \in D.\\
\end{aligned}
\right.
\end{align}
We now consider smooth approximations of the drift vector field $b(t,x)$. \vskip 0.3cm
Fix a nonnegative smooth function $\psi$ on $\R^{d+1}$ with compact support such that
$$\int_{\R^{d+1}} \psi(t,x)dtdx=1.$$
For any positive integer $n$, let $\psi_n(t,x):=2^{n(d+1)}\psi(2^nt,2^nx)$ and
\begin{eqnarray}\label{2.4} \nonumber
\begin{split}
b_n(t,x):=\int_{\R^{d+1}}b(s,y)\psi_n(t-s,x-y)dsdy.
\end{split}
\end{eqnarray}
Since $b_n(t,x)$ is smooth, according to Theorem 5.18 in \cite{Lieberman} there exists a unique \\
$u_n^T \in C^{1,2}_b([0, T]\times \bar{D})$,  that is the solution to the following boundary value problem:
\begin{align} \nonumber
 \left\{
\begin{aligned}
& \partial_{t} u_n^T(t,x)+\frac{1}{2} \Delta_{x} u_n^T(t,x)+b_n(t,x) \cdot \nabla_{x} u_n^T(t,x)=0,&&\forall (t,x) \in (0, T) \times D, \\
&  \frac{\partial u_n^T}{\partial n}(t, x)=n(x),&&\forall (t,x) \in (0, T) \times \partial D, \\
& u_n^T(T, x)=x, && \forall x \in D.\\
\end{aligned}
 \right.
\end{align}
Moreover, by Theorem 7.20 in \cite{Lieberman}
we have
\begin{eqnarray} \label{3.77}
\lim_{n\to \infty} \| u_n^{T}-  u^{T}\|_{W_{2d+2}^{1,2}((0,T)\times  D)}=0.
\end{eqnarray}

Set $G:=\{x \in \Rd, d(x,D)< \frac{\d_0}{2}\}$ and $G':=\{x \in \Rd, d(x,D)< \d_0\}$.
We have the following result.


\begin{lem} \label{L3.1.1}
There exist constants $M_0>0$ and $0< \a_0 <1$, such that for any $n\ge 1$, $0<T\le 1$, we can extend $u^T$ and $u^T_n$ to $[0,T] \times G'$, such that $u^T \in C^{0,1}([0,T]\times G')$, $u^T_n \in C^{1,1}([0,T]\times G')$, $u^T(T,x)=u^T_n(T,x)=x$ on $G'$ and
\begin{eqnarray}
\lim_{n\to \infty}\sup_{(t,x)\in [0,T]\times G'} (| u_n^{T}(t, x)- u^{T}(t, x)|+\|\nabla_{x}u_n^{T}(t, x)-\nabla_{x}u^{T}(t, x)\|)=0. \label{3.64}
\end{eqnarray}
Moreover, if $0\le s \le t \le T$ and $x \in G'$, then
\begin{eqnarray}
|u^{T}(t, x)-u^{T}(s, x)|+\|\nabla_{x}u^{T}(t, x)-\nabla_{x}u^{T}(s, x)\| \leqslant M_0|t-s|^{\a_0}, \label{3.1}\\
\|\nabla_{x}u_n^{T}(t, x)-\nabla_{x}u_n^{T}(s, x)\| \leqslant M_0|t-s|^{\a_0}. \label{2.38}
\end{eqnarray}
\end{lem}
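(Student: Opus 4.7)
The plan is to combine parabolic Sobolev embedding on $(0,T)\times D$ with a canonical extension across $\partial D$ tailored to the Neumann condition, and then transfer the $W^{1,2}_{2d+2}$ convergence (\ref{3.77}) via the continuous embedding.

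First I would invoke the parabolic Sobolev embedding: since $2d+2>d+2$, one has a continuous embedding
\[
W^{1,2}_{2d+2}\bigl((0,T)\times D\bigr)\hookrightarrow C^{(1+\alpha_*)/2,\,1+\alpha_*}\bigl(\overline{(0,T)\times D}\bigr),\quad \alpha_*=1-\tfrac{d+2}{2d+2}=\tfrac{d}{2d+2}.
\]
Combining this with the parabolic $W^{1,2}_p$ Calder\'on--Zygmund estimate for the Neumann problem satisfied by $u_n^T$, the uniform bound $\|b_n\|_\infty\le\|b\|_\infty$, and the fixed $C^2$ boundary data $n$, yields bounds on $\{u_n^T\}$ in $C^{(1+\alpha_*)/2,\,1+\alpha_*}$ that are uniform in $n$ and in $T\in(0,1]$ (after the time reversal $s=T-t$ all problems live on the fixed time interval $(0,1)$). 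Taking $\alpha_0:=\alpha_*/2$ absorbs the fact that the $t$-H\"older exponent of $\nabla_x u^T$ is half of that of $u^T$ itself, giving the interior versions of (\ref{3.1}) and (\ref{2.38}) on $\overline{(0,T)\times D}$ with constants $M_0,\alpha_0$ independent of $n$ and $T$.

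Next I would build the extension by nearest-point projection. For $x\in G'$, let $\varphi(x)$ denote the nearest-point projection onto $\partial D$ (set $\varphi(x)=x$ on $\bar D$); recall $\varphi\in C^2(G')$. Define
\[
\tilde u^T(t,x):=\begin{cases} u^T(t,x), & x\in\bar D,\\ u^T(t,\varphi(x))+\bigl(x-\varphi(x)\bigr), & x\in G'\setminus\bar D,\end{cases}
\]
and analogously $\tilde u_n^T$. At $t=T$, $\tilde u^T(T,x)=\varphi(x)+(x-\varphi(x))=x$ on all of $G'$, preserving the terminal condition. The essential verification is $C^1$-matching across $\partial D$. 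A direct computation using $D\varphi(x)=I-n(x)n(x)^\top$ on $\partial D$ gives
\[
D_x\tilde u^T(t,x)=\nabla u^T(t,x)\bigl(I-n n^\top\bigr)+n n^\top,\qquad x\in\partial D,
\]
which equals $\nabla u^T(t,x)$ precisely when $\nabla u^T(t,x)\cdot n=n$, i.e.\ exactly the vector-valued Neumann condition $\partial u^T/\partial n=n$. Thus $\tilde u^T\in C^{0,1}([0,T]\times G')$; the identical computation applied to $u_n^T\in C^{1,2}_b([0,T]\times \bar D)$ yields $\tilde u_n^T\in C^{1,1}([0,T]\times G')$. The $t$-H\"older regularity is inherited verbatim (no $t$-derivative of $\varphi$ enters), while spatial H\"older regularity of $\nabla_x\tilde u^T$ follows by the chain rule with $\varphi\in C^2$; so (\ref{3.1}) and (\ref{2.38}) pass from $\overline{(0,T)\times D}$ to $[0,T]\times G'$ with the same $\alpha_0$ (after enlarging $M_0$).

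Finally, for (\ref{3.64}), applying the continuous embedding to $u_n^T-u^T$ together with (\ref{3.77}) yields $\sup|u_n^T-u^T|+\sup\|\nabla_x u_n^T-\nabla_x u^T\|\to 0$ on $\overline{(0,T)\times D}$. The extension formula expresses $\tilde u_n^T-\tilde u^T$ and $\nabla_x\tilde u_n^T-\nabla_x\tilde u^T$ at a point $x\in G'\setminus\bar D$ as a bounded combination of $(u_n^T-u^T)(t,\varphi(x))$ and $(\nabla_x u_n^T-\nabla_x u^T)(t,\varphi(x))$ with smooth $\varphi$-dependent coefficients, so the uniform convergence transfers to $[0,T]\times G'$. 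The main obstacle is the $C^1$-matching of the extension at $\partial D$, which critically relies on the Neumann condition being \emph{vector-valued} and of the specific form $\partial u^T/\partial n=n$, exactly cancelling the normal component produced by the Jacobian of $\varphi$; a secondary technical point is ensuring that the parabolic regularity constants can be chosen uniformly in $T\in(0,1]$, handled by the time reversal $s=T-t$.
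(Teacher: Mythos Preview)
Your approach is correct and shares with the paper the use of the parabolic Sobolev embedding to obtain (\ref{3.1})--(\ref{2.38}) and (\ref{3.64}) on $\overline{(0,T)\times D}$, and then the transfer to $G'$ via a nearest-point-projection extension. The genuine difference lies in the extension formula. You use the affine extension
\[
\tilde u^T(t,x)=u^T(t,\varphi(x))+(x-\varphi(x)),\qquad x\in G'\setminus\bar D,
\]
whose $C^1$-matching across $\partial D$ hinges precisely on the Neumann condition $\partial u^T/\partial n=n$, which you exploit cleanly via the identity $D\varphi=I-nn^\top$ on $\partial D$. The paper instead uses the even reflection
\[
u^T(t,x)=2u^T(t,\varphi(x))-u^T(t,2\varphi(x)-x),\qquad x\in G'\setminus\bar D,
\]
for which the $C^1$-matching across $\partial D$ is automatic (the derivative from outside equals $\nabla u^T(t,x)[2D\varphi-(2D\varphi-I)]=\nabla u^T(t,x)$ regardless of the Neumann data), at the price of verifying differentiability near $\partial D$ through a local straightening diffeomorphism. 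Your route is more direct and makes explicit why the particular Neumann data $\partial u^T/\partial n=n$ matters for the extension; the paper's route is more robust in that the reflection works for any solution and pushes the burden to a standard coordinate-patch argument. Both are valid and yield the same conclusion.
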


\begin{proof}
By (\ref{3.77}) and Sobolev inequality (see Lemma II.3.3 in \cite{Lady}), we know that
$$u^T \in C^{0,1}([0,T] \times \bar D),$$
and (\ref{3.64})-(\ref{2.38}) hold if $G'$ is replaced by $\bar D$.

Now we define the extensions of $u^T$ and $u^T_n$ on $[0,T] \times G' \setminus \bar D$ by
\begin{eqnarray} \label{2.42}
\begin{split}
u^T(t,x):=2u^T(t,\varphi(x))-u^T(t,2\varphi(x)-x),\\
u^T_n(t,x):=2u^T_n(t,\varphi(x))-u^T_n(t,2\varphi(x)-x).
\end{split}
\end{eqnarray}
Since $\varphi\in C^2(\G_{\delta_0})$, it is easy to see that  $u^T(T,x)=u^T_n(T,x)=x$ on $G'$ and
$$u^T \in C^{0,1}([0,T]\times G' \setminus \bar D), \ u^T_n \in C^{1,1}([0,T]\times G' \setminus \bar D).$$
Now we show that for any $x_0 \in \p D$, $u^T(t,\cdot)$ and $u^T_n(t,\cdot)$ are differentiable in a neighborhood of $x_0$.

Since $\p D \in C^3$, for $x_0 \in \p D$ there exist a neighborhood $U$ of $x_0$ and a $C^3$-diffeomorphism $\Psi$ that maps $U$ onto $B(\Psi(x_0),r)$ for some $r>0$, such that $\Psi^{-1}(B^-(\Psi(x_0),r)=U \setminus D$, where $\Psi^{-1}$ is the inverses  of $\Psi$ and $B^-(\Psi(x_0),r):=\{x=(x_1,x_2,\cdots,x_d) \in B(\Psi(x_0),r) : x_d\le 0\}$.
Set $v(t,x):=u^T(t,\Psi^{-1}(x))$ and $v_n(t,x):=u^T_n(t,\Psi^{-1}(x))$. Then for $x \in B^-(\Psi(x_0),r)$,
\begin{eqnarray}
v(t,x)=2v(t,\Psi(\varphi(\Psi^{-1}(x)) ))-v(t,\Psi(2\varphi(\Psi^{-1}(x))-\Psi^{-1}(x))), \nonumber\\
v_n(t,x)=2v_n(t,\Psi(\varphi(\Psi^{-1}(x)) ))-v_n(t,\Psi(2\varphi(\Psi^{-1}(x))-\Psi^{-1}(x))). \nonumber
\end{eqnarray}
One can verify that $v \in C^{0,1}([0,T]\times B(\Psi(x_0),r))$ and $v_n \in C^{1,1}([0,T] \times B(\Psi(x_0),r))$. Hence, we have
$$u^T(t,x)=v(t,\Psi(x)) \in C^{0,1}([0,T]\times U),$$
and
$$u_n^T(t,x)=v_n(t,\Psi(x)) \in C^{1,1}([0,T]\times U).$$

Note that $2\varphi(x)-x \in D$ for $x \in G' \setminus \bar D$, from the definition of $u^T$ and $u^T_n$ in (\ref{2.42}), clearly (\ref{3.64})-(\ref{2.38}) hold.
\end{proof}

\vskip 0.4cm

Set $\tilde u^T(t,x):=(t,u^T(t,x))$ and $\tilde u_n^T(t,x):=(t,u_n^T(t,x))$. Then we have the following proposition:

\begin{prp} \label{C3.2}
There exists  a constant $T_0 \in (0, 1]$ such that for any $T\in (0,  T_0]$, $\tilde u^T $, $\tilde u_n^T $ are open mappings on $(0,T) \times G$. Moreover, there exist positive constants $M_1,M_2,M_3$, such that for any $n\ge 1$, $0 \le t\le T \le T_0$ and $x,y \in G$,
\begin{eqnarray}
\frac{1}{2} \le |D_x u^T(t,x)|\le 2, \ \frac{1}{2} \le |D_xu_n^T(t,x)|\le 2, \label{3.63}\\
M_1|x-y|\le |u^T(t,x)-u^T(t,y)|\le M_2|x-y|, \label{3.2} \\
M_1|x-y|\le |u_n^T(t,x)-u_n^T(t,y)|\le M_2|x-y|. \label{3.73}
\end{eqnarray}
Furthermore, if $|x-y|< \frac{\d_0}{2}$, then
\begin{eqnarray}\label{2.24}
|u^T(t,x)-u^T(t,y)| \wedge |u_n^T(t,x)-u_n^T(t,y)|\ge (1-M_3T^{\a_0})|x-y|,
\end{eqnarray}
where $\a_0$ is the constant defined in Lemma \ref{L3.1.1}.

\end{prp}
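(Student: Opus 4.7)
The plan rests on the terminal condition $u^T(T,x) = x = u^T_n(T,x)$ on $G'$, which forces $\nabla_x u^T(T,\c) = \nabla_x u^T_n(T,\c) = I$. Combined with the $\a_0$-Hölder estimates \eqref{3.1} and \eqref{2.38} from Lemma \ref{L3.1.1}, this yields
$$\|\nabla_x u^T(t,x) - I\| \le M_0 T^{\a_0}, \qquad \|\nabla_x u^T_n(t,x) - I\| \le M_0 T^{\a_0},$$
uniformly on $[0,T] \times G'$. I would choose $T_0 \in (0,1]$ so small that $M_0 T_0^{\a_0}$ lies below whatever threshold is required later. Since a matrix within operator-norm distance $< 1/2$ of $I$ has determinant in $[1/2, 2]$, the bounds \eqref{3.63} follow at once.

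For the near-diagonal lower bound \eqref{2.24}, I would observe that if $x, y \in G$ with $|x-y| < \d_0/2$, then the straight segment $[y,x]$ lies in $G'$ (each point has distance at most $\d_0/2 + \d_0/2 = \d_0$ from $D$). Working first with the smooth approximant $u^T_n \in C^{1,1}$, the fundamental theorem of calculus gives
$$u^T_n(t,x) - u^T_n(t,y) = (x-y) + \int_0^1 \bigl(\nabla_x u^T_n(t, y+s(x-y)) - I\bigr)(x-y)\, ds,$$
whence $|u^T_n(t,x) - u^T_n(t,y)| \ge (1 - M_0 T^{\a_0})|x-y|$. Passing $n \to \infty$ using the uniform convergence \eqref{3.64} transfers the bound to $u^T$, so \eqref{2.24} holds with $M_3 = M_0$.

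The global bi-Lipschitz estimates \eqref{3.2} and \eqref{3.73} are the main obstacle, because $G$ is not convex and iterating \eqref{2.24} through triangle inequality cannot yield a lower bound. I would exploit quasiconvexity of the tubular neighborhood $G'$: since $\p D \in C^3$ is bounded and smooth, $G'$ is a bounded connected smooth domain, so there exists $C > 0$ such that every pair $x, y \in G$ can be joined by a rectifiable path $\g \subset G'$ of length at most $C|x-y|$. Integrating $\nabla_x u^T_n - I$ along $\g$ controls the deviation,
$$|u^T_n(t,x) - u^T_n(t,y) - (x-y)| \le CM_0 T^{\a_0}|x-y|,$$
which produces the uniform two-sided bounds $(1 - CM_0 T^{\a_0})|x-y| \le |u^T_n(t,x) - u^T_n(t,y)| \le (1 + CM_0 T^{\a_0})|x-y|$. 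Shrinking $T_0$ so that $CM_0 T_0^{\a_0} < 1$ yields admissible $M_1, M_2$; the analogous bounds for $u^T$ follow from \eqref{3.64}.

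Finally, the open mapping property follows from Brouwer's invariance of domain. The map $\tilde u^T: (0,T) \times G \to \Rdd$ is continuous, and it is injective because its first coordinate is the identity in $t$ and, for each fixed $t$, $u^T(t,\c)$ is injective by the positive lower bound just established. Invariance of domain then forces $\tilde u^T$ to send open sets to open sets, and the identical argument applies verbatim to $\tilde u^T_n$.
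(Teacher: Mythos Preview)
Your argument is correct, and it diverges from the paper's in two places worth noting.

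For the global bi-Lipschitz bounds \eqref{3.2}--\eqref{3.73} you invoke the quasiconvexity of the tubular neighborhood $G'$ and integrate $\nabla_x u^T_n - I$ along a path of controlled length. The paper instead splits into cases: when $|x-y| \ge \d_0/2$ it uses the $C^0$ closeness $|u^T(t,x)-x| \le M_0 T^{\a_0}$ (from \eqref{3.1} and the terminal condition) together with the triangle inequality to get $|u^T(t,x)-u^T(t,y)| \ge |x-y| - 2M_0 T^{\a_0} \ge |x-y|/2$ directly, and when $|x-y| < \d_0/2$ it falls back on \eqref{2.24}. Your route is tidier and avoids the case split, at the cost of citing the quasiconvexity of bounded smooth domains; the paper's route is entirely elementary and self-contained.

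For the open-mapping assertion you appeal to Brouwer's invariance of domain after observing that $\tilde u^T$ is continuous and injective on $(0,T)\times G$. The paper gives a hands-on argument: for each fixed $t$, $u^T(t,\cdot)$ is open by the inverse function theorem (via \eqref{3.63}), and the H\"older continuity in $t$ from \eqref{3.1} is then used to show that a ball around $u^T(t_0,x_0)$ remains inside $u^T(t,A_1)$ for $t$ near $t_0$, by a short contradiction argument tracking the boundary $u^T(t,\p A_1)$. Your approach is decidedly shorter; the paper's has the virtue of not importing a topological black box.

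Two minor points: the claim that operator-norm distance $< 1/2$ from $I$ forces determinant in $[1/2,2]$ fails for large $d$ (take $A = \tfrac{3}{4}I$), so the threshold on $T_0$ must depend on $d$; and the detour through $u^T_n$ for \eqref{2.24} is unnecessary since $u^T \in C^{0,1}([0,T]\times G')$ already admits the line-integral directly, as the paper does.
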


\begin{proof}
\textcolor[rgb]{1.00,0.00,0.00}{
}
We only give the proof of the properties of $u^T$ because the corresponding proof for $u_n^T$ is similar.

\vskip 0.3cm

Since $D_x u^T(T,x)$ is the identity matrix, by Lemma \ref{L3.1.1} one can see that (\ref{3.63})
 holds if $T_0$ is sufficiently small. Without loss of generality, we assume $T_0 < (\frac{1}{dM_0})^{\frac{1}{\a_0}} \wedge(\frac{\d_0}{8M_0})^{\frac{1}{\a_0}} $.

\vskip 0.3cm

To show that $\tilde u^T $ is an open mapping on $(0,T) \times G$, it is sufficient  to show that for any open set $A_1 \subset G$ and $(t_0,x_0) \in (0,T) \times A_1$, there exist constants $\eta,\d>0$ such that if $t \in (t_0-\eta, t_0+\eta)$, then
\begin{eqnarray}  \label{2.39}
 B(u^T(t_0,x_0), \d) \subset  u^T(t , A_1).
\end{eqnarray}

By (\ref{3.63}) and the implicit function theorem, $u^T(t_0,\cdot)$ is an open mapping on $ G$. Hence there exist a constant $\d>0$ and an open set $A_2 \subset A_1$ such that $u^T(t_0,A_2)=B(u^T(t_0,x_0), \d)$ and $B(u^T(t_0,x_0), 2\d) \subset  u^T(t_0, A_1)$. By (\ref{3.1}), there exists a constant $\eta>0$ such that for any $t \in (t_0-\eta, t_0+\eta)$,
\begin{eqnarray} \label{2.40}
|u^T(t,x)-u^T(t_0,x)| < \frac{\d}{4}, \ \forall x \in G'.
\end{eqnarray}

Suppose $B(u^T(t_0,x_0), \d) \nsubseteq u^T(t , A_1)$ for some $t\in (t_0-\eta, t_0+\eta)$. Then there exists a point $x \in A_2 $ such that $u^T(t_0,x) \in u^T(t,A_1)^c$. Since
 $u^T(t,x) \in u^T(t, A_1)$ and since $u^T(t,A_1)$ is an open set,
we have
\begin{eqnarray} \label{2.41}
\begin{split}
d(u^T(t_0,x),u^T(t,\p A_1)) &\le |u^T(t_0,x)-u^T(t,x)| <\frac{\d}{4}.
\end{split}
\end{eqnarray}
On the other hand, by (\ref{2.40}),
\begin{eqnarray} \nonumber
\begin{split}
d(u^T(t_0,x),u^T(t,\p A_1)) &\ge d(u^T(t_0,x),u^T(t_0,\p A_1))-d(u^T(t_0,\p A_1),u^T(t,\p A_1)) \\
&\ge d(B(u^T(t_0,x_0), \d),u^T(t_0,\p A_1))-d(u^T(t_0,\p A_1),u^T(t,\p A_1)) \\
& \ge \d-\frac{\d}{4}=\frac{3\d}{4},
\end{split}
\end{eqnarray}
which contradicts (\ref{2.41}). Hence we have (\ref{2.39}).

\vskip 0.3cm

Now we show (\ref{2.24}).
For $0 \le t\le T \le T_0$ and $x,y \in G$ with $|x-y|< \frac{\d_0}{2}$, we have $\lambda x+(1-\lambda)y \in G'$ for any $\lambda \in (0,1)$. Hence by (\ref{3.1}) and the fact that $\| \c \|_2 \le d\| \c \|$,
\begin{eqnarray*}
\begin{split}
&\ \ \ \   |u^T(t,x) -u^T(t,y)| \\
&\ge   |u^T(T,x) \! - \! u^T(T,y)|-|\int_0^1 (D_x u^T(T,\lambda x+(1-\lambda)y) \! - \! D_x u^T(t,\lambda x+(1-\lambda)y)) \c (x-y)d \lambda | \\
&\ge   |u^T(T,x) \! - \! u^T(T,y)|-\int_0^1 d \| D_x u^T(T,\lambda x+(1-\lambda)y) \! - \! D_x u^T(t,\lambda x+(1-\lambda)y)) \| |x-y| d \lambda \\
&\ge  |x-y|-dM_0|T-t|^{\a_0} |x-y| \ge (1-dM_0 T^{\a_0}) |x-y|.
\end{split}
\end{eqnarray*}

\vskip 0.3cm

Finally we show (\ref{3.2}). Let  $0 \le t\le T \le T_0$ and $x,y \in G$. When $|x-y|\ge \frac{\d_0}{2}$, notting that $T_0 <(\frac{\d_0}{8M_0})^{\frac{1}{\a_0}}$, by (\ref{3.1}) we have
\begin{eqnarray} \label{2.43}
\begin{split}
|u^T(t,x)-u^T(t,y)| \le 2 \sup_{(t,x)\in [0,T]\times G} |u^T(t,x) | \le \frac{4}{\d_0}\sup_{(t,x)\in [0,T]\times G} |u^T(T,x) | |x-y|,
\end{split}
\end{eqnarray}
and
\begin{eqnarray} \label{2.67}
\begin{split}
|u^T(t,x)-u^T(t,y)| &=|u^T(t,x)-u^T(T,x)-u^T(t,y) +u^T(T,y)+x-y| \\
& \ge |x-y|-|u^T(t,x)-u^T(T,x)|-|u^T(t,y) -u^T(T,y)|\\
& \ge |x-y|-2M_0T^{\a_0}\\
&\ge \frac{|x-y|}{2}-(\frac{\d_0}{4}- 2M_0T^{\a_0})\\
&> \frac{|x-y|}{2}.
\end{split}
\end{eqnarray}
When $|x-y|\le \frac{\d_0}{2}$, then $\lambda x+(1-\lambda)y \in G'$ for any $\lambda \in (0,1)$. Using the Lagrange mean value theorem and the boundness of $\|D_xu^T(t,x)\|$, we have
\begin{eqnarray} \label{2.68}
\begin{split}
|u^T(t,x)-u^T(t,y)| \le M_2 |x-y|.
\end{split}
\end{eqnarray}
Hence combining (\ref{2.24}) with (\ref{2.43})-(\ref{2.68}), we get (\ref{3.2}).
\end{proof}

\vskip 0.4cm

\section{Domain transformation and regularity of the reflecting directions }
In this section we study the time dependent domains which are the images of  domain $D$ under the solution mappings $u^T(t, \cdot)$, $t\in [0, T]$. Regularities of the time dependent vector field  of the reflecting directions will be established.

\vskip 0.3cm
We start by  showing the exterior and interior cone conditions for $u^T(t,D)$ for sufficiently small $T$.
Set
$$\d_1:= M_1  d( \p D ,   \p G)=\frac{M_1\d_0}{2},$$
$$\tilde G_1^T:=\{(t,x):0 \le t \le T , \ d(x,u^T(t,D))<\frac{\d_1}{2}\},$$
and
$$\tilde G_2^T:=\{(t,x):0 \le t \le T , \ d(x,u^T(t,D))<\frac{3\d_1}{4}\}.$$
Recall the constant $T_0$ defined in the statement of Proposition 2.1. We first have the following Lemma.
\begin{lem} \label{L3.1}
There exists an integer $N_0>0$ such that for $n\ge N_0$, $0<T\le T_0$,
$$\tilde u^T_n([0,{T}]\times \bar D) \bigcup \tilde u^T([0,{T}]\times \bar D) \subset \tilde G_1^T \subset \tilde G_2^T  \subset \tilde u^T_n([0,{T}]\times G) \bigcap \tilde u^T([0,{T}]\times G). $$
\end{lem}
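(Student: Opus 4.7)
The plan is to verify the three set containments in succession. Write $\phi_t := u^T(t,\cdot)$ and $\phi_{n,t}:=u_n^T(t,\cdot)$; by (\ref{3.2}) and (\ref{3.73}) both are bi-Lipschitz on $G'$ with constants $M_1,M_2$. The middle inclusion $\tilde G_1^T \subset \tilde G_2^T$ is immediate from $\d_1/2<3\d_1/4$. For the first inclusion, note that for $x \in \bar D$ the image $\phi_t(x)$ lies in $\phi_t(\bar D)=\overline{\phi_t(D)}$, so $d(\phi_t(x),u^T(t,D))=0$; and by the uniform convergence (\ref{3.64}) of Lemma \ref{L3.1.1}, I would choose $N_0$ large enough that $\sup_{(t,x)\in[0,T]\times G'}|u_n^T(t,x)-u^T(t,x)|<\d_1/4$ for $n\ge N_0$, so that
$$d(\phi_{n,t}(x),u^T(t,D)) \le |\phi_{n,t}(x)-\phi_t(x)|<\d_1/4<\d_1/2 \qquad \text{for } x \in \bar D.$$

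The substance lies in the third containment. Fix $(t,x) \in \tilde G_2^T$. Since $\bar D$ is compact, pick $y_0 \in \bar D$ realising $|x-\phi_t(y_0)|=d(x,\phi_t(\bar D))=d(x,u^T(t,D))<3\d_1/4$. Because $y_0 \in \bar D$, the ball $B(y_0,\d_0/2)$ sits inside $G$. The key sub-claim is
\begin{equation*}
\phi_t\bigl(B(y_0,\d_0/2)\bigr)\supset B\bigl(\phi_t(y_0),\d_1\bigr),
\end{equation*}
from which $x \in B(\phi_t(y_0),3\d_1/4) \subset \phi_t(G)$ follows at once. To prove the sub-claim I invoke invariance of domain: continuity and injectivity of $\phi_t$ on $\bar B(y_0,\d_0/2) \subset G'$ make $\phi_t$ a homeomorphism onto its image, so $\phi_t(B(y_0,\d_0/2))$ is open and $\p\phi_t(B(y_0,\d_0/2)) = \phi_t(\p B(y_0,\d_0/2))$. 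For any $w\in B(\phi_t(y_0),\d_1)\setminus\{\phi_t(y_0)\}$ consider the straight segment $\g(s):=\phi_t(y_0)+s(w-\phi_t(y_0))$, $s\in[0,1]$. If $w \notin \phi_t(B(y_0,\d_0/2))$, then at the first exit time $s^*\in(0,1)$ one has $\g(s^*)=\phi_t(z)$ with $|z-y_0|=\d_0/2$, so the lower bound in (\ref{3.2}) forces $|\g(s^*)-\phi_t(y_0)|\ge M_1\d_0/2 = \d_1$; but $|\g(s^*)-\phi_t(y_0)| = s^*|w-\phi_t(y_0)|<\d_1$, a contradiction.

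For the analogous inclusion into $\tilde u_n^T([0,T]\times G)$, enlarge $N_0$ so that $\sup_{(t,x)\in[0,T]\times G'}|u_n^T-u^T|<\d_1/8$; then $|x-\phi_{n,t}(y_0)|<3\d_1/4+\d_1/8<\d_1$, and since (\ref{3.73}) gives $\phi_{n,t}$ the same bi-Lipschitz constants, the identical segment-exit argument applied to $\phi_{n,t}$ yields $x \in \phi_{n,t}(B(y_0,\d_0/2))\subset \phi_{n,t}(G)$. The main obstacle is exactly the sub-claim: one must upgrade the quantitative injectivity $|\phi_t(x)-\phi_t(y)|\ge M_1|x-y|$ into a ball-inside-the-image statement, uniformly in $t\in[0,T]$ and in $n$; the uniform bi-Lipschitz constants from Proposition \ref{C3.2} together with the uniform convergence of Lemma \ref{L3.1.1} are precisely what makes this uniformity possible.
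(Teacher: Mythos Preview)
Your proof is correct and follows essentially the same route as the paper. Both arguments handle the first inclusion via the uniform convergence (\ref{3.64}), and both handle the substantive third inclusion by combining the bi-Lipschitz lower bound $M_1|x-y|\le|\phi_t(x)-\phi_t(y)|$ with a segment argument: the paper phrases this as a contradiction (a segment from a point of $u_n^T(t,D)$ to $x$ would have to cross $u_n^T(t,\partial G)$ at distance $<\delta_1$, violating (\ref{3.73})), whereas you package it as the ``ball-in-image'' sub-claim $\phi_t(B(y_0,\delta_0/2))\supset B(\phi_t(y_0),\delta_1)$ via invariance of domain. The paper invokes openness of $u^T(t,G)$ directly from Proposition~\ref{C3.2} rather than re-deriving it locally, and chooses the threshold $\delta_1/8$ once at the outset rather than tightening from $\delta_1/4$ midway, but these are cosmetic differences. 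One small point: your exit point $z$ lies on $\partial B(y_0,\delta_0/2)\subset\bar G$, possibly on $\partial G$, so strictly speaking (\ref{3.2}) is stated only for $x,y\in G$; this is harmless since the inequality extends to $\bar G\subset G'$ by continuity of $u^T$ on $G'$.
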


\begin{proof}
By (\ref{3.64}), there exists an integer $N_0>0$ such that for $n\ge N_0$, we have
\begin{eqnarray} \nonumber
\begin{split}
\sup_{(t,x)\in [0,T] \times G}|u^T_n(t,x)-u^T(t,x)|<\frac{\d_1}{8},
\end{split}
\end{eqnarray}
which implies $\tilde u^T_n([0,{T}]\times \bar D) \bigcup \tilde u^T([0,{T}]\times \bar D) \subset \tilde G_1^T \subset \tilde G_2^T$.
Noting that $u^T(t,G)$ is an open set by Proposition \ref{C3.2}  and the fact that $\inf_{0 \le  t \le T}d(u^T(t, D), u^T(t, \p G))\ge M_1  d( \p D ,   \p G)= \d_1$, we have $ \tilde G_2^T  \subset \tilde u^T([0,{T}]\times G)$.

Now we show that $\tilde G_2^T  \subset \tilde u^T_n([0,{T}]\times G)$ for $n \ge N_0$. Suppose there exists a point $(t,x)$ belonging to $\tilde G_2^T \setminus \tilde u^T_n([0,{T}]\times  G)$. Then $d(x,u^T_n(t,D))\le d(x,u^T(t,D))+\frac{\d_1}{8}<\frac{7\d_1}{8}$. Therefore there exists a $y_1 \in u^T_n(t,D)$ such that $|x-y_1|< \frac{7\d_1}{8}$. On the other hand, since $x\in u^T_n(t,G)^c$, $y_1 \in u^T_n(t,D)$ and  since $u_n^T(t,G)$ is an open set, there exists a $\lambda \in (0,1)$ such that  $y_2:=\lambda x+(1-\lambda)y_1 \in u^T_n(t, \p G)$. Hence $|y_1-y_2|<|x-y_1|< \frac{7\d_1}{8}$,
which contradicts the fact that $d(u^T_n(t,D),u^T_n(t, \p G))\ge M_1d( D , \p G)=\d_1$ by (\ref{3.73}).
\end{proof}

\vskip 0.4cm

The next result shows that  $u^T(t,D)$ fulfils  the exterior and interior cone conditions for  sufficiently small $T$.

\begin{prp} \label{P2.2}
There exist constants $T_1 \in (0, T_0)$, $\theta_0 \in (0,\frac{\pi}{2})$ and $\d_2 \in (0, \frac{ \d_1}{2} ]$ such that for any $t \in [0,{T_1}]$ and $x\in \p D$,
\begin{eqnarray}
C(u^{T_1}(t,x),-n(x),\t_0,\d_2) \subset u^{T_1}(t,\bar D)^c, \label{3.59} \\
C(u^{T_1}(t,x),n(x),\t_0,\d_2) \subset u^{T_1}(t, D). \label{3.60}
\end{eqnarray}

\end{prp}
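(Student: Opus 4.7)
The plan is to transfer the interior and exterior cone conditions enjoyed by $D$ itself to the images $u^{T_1}(t,D)$, using that $u^{T_1}(t,\cdot)$ is a small perturbation of the identity when $T_1$ is small. Because $\p D\in C^3$ admits the uniform two-sided sphere condition of radius $\d_0$ recalled at the start of Section 2, a short geometric computation (embed the cone in the touching ball of radius $\d_0$) furnishes constants $\t_1\in (0,\pi/2)$ and $\d_3>0$, depending only on $\d_0$, such that for every $y\in\p D$,
\[
C(y,n(y),\t_1,\d_3)\subset D,\qquad C(y,-n(y),\t_1,\d_3)\subset \bar D^c.
\]
I then fix $\t_0\in(0,\t_1)$ and will shrink $\d_2\in(0,\d_1/2]$ and $T_1\in(0,T_0)$ at the end of the argument.

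The engine of the argument is that $D_xu^{T_1}(T_1,\cdot)\equiv I$, paired with the H\"older estimate (\ref{2.38}). For $y\in\p D$ and $y'\in G'$ with $|y'-y|<\d_0/2$, the segment from $y$ to $y'$ stays in $G'$, and
\[
u^{T_1}(t,y')-u^{T_1}(t,y)-(y'-y)=\int_0^1\bigl[D_xu^{T_1}(t,y+\l(y'-y))-D_xu^{T_1}(T_1,y+\l(y'-y))\bigr](y'-y)\,d\l,
\]
from which $|u^{T_1}(t,y')-u^{T_1}(t,y)-(y'-y)|\le dM_0 T_1^{\a_0}|y'-y|$.

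To prove (\ref{3.60}), fix $z\in C(u^{T_1}(t,y),n(y),\t_0,\d_2)$ and write $z-u^{T_1}(t,y)=rv$ with $|v|=1$, $v\cdot n(y)>\cos\t_0$ and $r\in(0,\d_2)$. Since $u^{T_1}(t,y)\in \overline{u^{T_1}(t,D)}$ and $\d_2\le\d_1/2$, Lemma \ref{L3.1} places $z$ in $u^{T_1}(t,G)$, and the bi-Lipschitz estimate (\ref{3.2}) from Proposition \ref{C3.2} then produces a unique $y'\in G$ with $u^{T_1}(t,y')=z$, satisfying $r/M_2\le|y'-y|\le r/M_1$. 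The perturbation bound above gives $(y'-y)\cdot n(y)\ge r\cos\t_0-dM_0 T_1^{\a_0}|y'-y|$ and $|y'-y|\le r/(1-dM_0 T_1^{\a_0})$. Choosing $\d_2\le M_1\d_3$, so that $|y'-y|<\d_3$, and $T_1$ so small that
\[
\cos\t_0\,(1-dM_0 T_1^{\a_0})>\cos\t_1+dM_0 T_1^{\a_0},
\]
I obtain $(y'-y)\cdot n(y)>\cos\t_1|y'-y|$, hence $y'\in C(y,n(y),\t_1,\d_3)\subset D$ and therefore $z\in u^{T_1}(t,D)$.

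The exterior inclusion (\ref{3.59}) follows by the mirror argument in contrapositive form: any $z\in C(u^{T_1}(t,y),-n(y),\t_0,\d_2)\cap u^{T_1}(t,\bar D)$ would equal $u^{T_1}(t,y')$ for some $y'\in\bar D$, and the same perturbation analysis would force $y'\in C(y,-n(y),\t_1,\d_3)\subset \bar D^c$, a contradiction. I expect the real obstacle to be careful bookkeeping of the constants rather than a deep conceptual point: the quadruple $(\t_0,\t_1,\d_2,T_1)$ must be chosen in the correct order so that the narrower target cone at $u^{T_1}(t,y)$ pulls back inside the wider source cone at $y$, while $z$ simultaneously remains in the image region $u^{T_1}(t,G)$ where inversion via Proposition \ref{C3.2} is available. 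The strict gap $\t_0<\t_1$ combined with the smallness of $T_1$ relative to $M_0$ and $\a_0$ is exactly what supplies the necessary slack.
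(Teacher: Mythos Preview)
Your proof is correct and follows essentially the same approach as the paper: both arguments pull back a point in the image cone via Lemma \ref{L3.1} and Proposition \ref{C3.2}, then use the segment integral perturbation bound $|u^{T_1}(t,y')-u^{T_1}(t,y)-(y'-y)|\le dM_0 T_1^{\a_0}|y'-y|$ to show the preimage lands in the corresponding cone for $D$. The only cosmetic difference is the order of choices---the paper fixes $T_1$ first and then defines $\t_0$ by an explicit formula, whereas you fix $\t_0<\t_1$ first and then shrink $T_1$---but the resulting constraint $(1-dM_0T_1^{\a_0})\cos\t_0>\cos\t_1+dM_0T_1^{\a_0}$ is equivalent to the paper's.
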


\begin{proof}
We only prove (\ref{3.59}). (\ref{3.60}) can be proved similarly.

Since $\p D$ is smooth, there exist constants $\t \in (0,\frac{\pi}{2})$ and $ r >  0$ such that for $x  \in   \p D$,   $C(x, -n(x), \t,  r) \!  \subset   \!   \bar D^c$.
Choose $T_1  \!  \in  \!  (0, T_0)$ to be sufficiently small so  that $\frac{\cos \t  \!  + \!  dM_0 T_1^{\a_0}}{1 \!  - \!  M_3 T_1^{\a_0}}  \!  \in \!   (0,1)$. \\
Set $\t_0:=\arccos \frac{\cos \t   + dM_0 T_1^{\a_0}}{1   -    M_3 T_1^{\a_0}}$ and $\d_2   :=   \frac{ \d_1}{2}   \wedge ( M_1r) $. Now we show that for $t    \in     [0,{T_1}]$ and $x    \in   \p D$, (\ref{3.59}) holds.

Take $y    \in    C(u^{T_1}(t,x),-n(x),\t_0,\d_2)$. Since $(t,y) \in \tilde G_1^T$, by (\ref{3.2}) and Lemma \ref{L3.1} there exists a $y'\in G$ such that $y=u^{T_1}(t,y')$ and
$$|x-y'|\le \frac{1}{M_1}|u^{T_1}(t,x)-u^{T_1}(t,y')|<\frac{\d_0}{4}\wedge r,$$
which implies $d(\l x+(1-\l)y',x)<\frac{\d_0}{4}$ for $\l \in (0,1)$. From the definition of $G$, we see that $\l x+(1-\l)y' \in G$. Together with (\ref{3.1}) and (\ref{2.24}) we have
\begin{eqnarray} \nonumber
\begin{split}
&\ \ \ \ (x-y') \c n(x)\\
&=(u^{T_1}(t,x)-u^{T_1}(t,y'))\c n(x)+(x-u^{T_1}(t,x)-(y'-u^{T_1}(t,y')))\c n(x)\\
&> \cos \t_0|u^{T_1}(t,x)-u^{T_1}(t,y')|-|u^{T_1}(T_1,x)-u^{T_1}(t,x)-(u^{T_1}(T_1,y')-u^{T_1}(t,y'))|\\
&\ge \cos \t_0|u^{T_1}(t,x)-u^{T_1}(t,y')|\\
&\quad \  -\int_0^1 \|\nabla_x u^{T_1}(T_1,\l x+(1-\l)y')-\nabla_x u^{T_1}(t,\l x+(1-\l)y') \|_2 |x-y'|d\l\\
&\ge (1-M_3 T_1^{\a_0}) \cos \t_0 |x-y'|-dM_0 T_1^{\a_0}|x-y'|= \cos \t |x-y'|, \\
\end{split}
\end{eqnarray}
which implies $y' \in C(x,-n(x),\t,r) \subset \bar D^c$. Hence $y=u^{T_1}(t,y')\in u^{T_1}(t,\bar D)^c$, which implies (\ref{3.59}).
\end{proof}

\vskip 0.4cm

Here and below, we fix $T_1>0$ as defined in Proposition \ref{P2.2}. Denote $u(t,x):=u^{T_1}(t,x)$, $u_n(t,x):=u^{T_1}_n(t,x)$, $\tilde u (t,x):=\tilde u^{T_1}(t,x)$, $\tilde u_n(t,x):=\tilde u_n^{T_1}(t,x)$, $\tilde G_1:=\tilde G_1^{T_1}$, $\tilde G_2:=\tilde G_2^{T_1}$ and $\tilde D :=\tilde u((0,{T_1})\times D)$.
\vskip 0.3cm

By Proposition \ref{C3.2} and Lemma \ref{L3.1}, the inverses  of $u$ and $u_n$ exist, denoted by $u^{-1}$ and $u_n^{-1}$. Moreover, it is easy to see that $u^{-1}, u_n^{-1}$ are continuous in $\tilde G_2$ w.r.t. $(t,x)$ for $n\ge N_0$. Take a smooth function $\phi(t,x) \in C_b^{\infty}([0,T_1]\times \Rd)$ such that $\phi(t,x)=1$ on $\tilde G_1$ and $\phi(t,x)=0$ on $[0,T_1]\times \Rd \diagdown \tilde G_2$. For $n\ge N_0$,
set
$$\gamma_n(t,x):=(\gamma_n^1(t,x),\gamma_n^2(t,x), \cdots,\gamma_n^d(t,x)):=n(u_n^{-1}(t,x))\phi(t,x),$$
$$\gamma(t,x):=(\gamma^1(t,x),\gamma^2(t,x),\cdots,\gamma^d(t,x)):=n(u^{-1}(t,x))\phi(t,x).$$
Then $\g_n$ and $\g$ are well defined in $[0,T_1]\times \Rd$.
$\g$ will be the directions of reflection of the transformed reflecting SDEs. To obtain the regularity of $\g$, we need to study the convergence of $u_n^{-1}$, which is the content of the next lemma.

\vskip 0.4cm

Set
\begin{eqnarray}
\begin{split}
D(t,c):=\{x:d(x,u(t,D)^c)> c \}, \label{2.19-1}
\end{split}
\end{eqnarray}
$\tilde {D}_{c}:=\{(t,x):t\in (0,T_1),\ x \in D(t, c)\}$ and $\tilde {D}_{c}':= \{(t,x):t\in (0,T_1),\ d(x,D^c)> c \}$ for $c>0$. Then we have the following result.

\begin{lem} \label{L3.2.2}
For $n\ge N_0$, we have $u^{-1}_n \in C_b^{1,1}(\tilde G_2)$, $u^{-1} \in  C^{0,1}_b(\tilde G_2)$ and
\begin{eqnarray}  \label{3.87}
\begin{split}
\lim_{n \to \infty}\|u^{-1}_n-u^{-1}\|_{C^{0,1}_b(\tilde G_2)}=0.
\end{split}
\end{eqnarray}
Moreover, for any constants $\e,p>0$ and functions $f \in L^p((0,T_1) \times D )$ and $g_k,g \in L^p( \tilde D)$ with $\lim_{k \to \infty} \|g_k-g\|_{L^p( \tilde D)}=0$,  $\tilde {D}_{\e}$ is an open set in $\Rdd$ and there exists an integer $N_0(\e) \ge N_0$ such that for any $n\ge N_0(\e)$, we have $\tilde u_n^{-1}(\tilde {D}_{\e}) \subset \tilde {D}_{\frac{\e}{2M_2}}'$, $u_n^{-1} \in C_b^{1,2}(\tilde {D}_{\e})$, $|D_x u^{-1}_n(t,x)| \ge \frac{1}{2}$ for $(t,x)\in \tilde {D}_{\e}$ and
\begin{eqnarray}
\lim_{n \to \infty}\|f(t,u_n^{-1}(t,x) )-f(t,u^{-1}(t,x)) \|_{L^p(\tilde {D}_{\e})}=0,  \label{3.88} \\
\lim_{n \to \infty}\|g(t,u_n(t,x) )-g(t,u(t,x)) \|_{L^p(\tilde {D}_{\e}')}=0, \label{3.89} \\
\lim_{k \to \infty}\|g_k(t,u_n(t,x) )-g(t,u_n(t,x)) \|_{L^p(\tilde {D}_{\e}')}=0,  \ \forall n \ge N_0(\e) ,  \label{3.91}
\end{eqnarray}
where $\tilde u_n^{-1}$ is the inverse of $\tilde u_n$ and $M_2$ was the constant defined in Proposition \ref{C3.2}.
\end{lem}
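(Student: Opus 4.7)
My plan is to reduce every statement to the uniform and Hölder estimates from Lemma \ref{L3.1.1}, the bi-Lipschitz and open-mapping properties from Proposition \ref{C3.2}, and standard change-of-variables arguments with uniformly bounded Jacobian.

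First, for the regularity of the inverses on $\tilde G_2$: by Lemma \ref{L3.1}, the preimages $u_n^{-1}(t,x)$ and $u^{-1}(t,x)$ lie in $G'$ whenever $(t,x)\in \tilde G_2$, and on $[0,T_1]\times G'$ we have $u_n\in C^{1,1}$, $u\in C^{0,1}$ with $|D_x u_n|,|D_x u|\in[\tfrac12,2]$ by \eqref{3.63}. The standard parametric inverse function theorem then yields $u_n^{-1}\in C_b^{1,1}(\tilde G_2)$, $u^{-1}\in C_b^{0,1}(\tilde G_2)$, together with the matrix identity $D_x u_n^{-1}(t,x)=(D_x u_n(t,u_n^{-1}(t,x)))^{-1}$. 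For \eqref{3.87}, setting $y_n=u_n^{-1}(t,x)$ and $y=u^{-1}(t,x)$, one has
\[
M_1|y_n-y|\le |u(t,y_n)-u(t,y)|=|u(t,y_n)-u_n(t,y_n)|\le \|u-u_n\|_\infty,
\]
so $u_n^{-1}\to u^{-1}$ uniformly by \eqref{3.64}; uniform convergence of $\nabla_x u_n^{-1}$ then follows from the matrix-inverse formula combined with uniform convergence of $\nabla_x u_n$ in \eqref{3.64}.

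Next, to prove that $\tilde D_\e$ is open, I would show the continuity of $\rho(t,x):=d(x,u(t,D)^c)$ on $\tilde G_2$. Because $u(t,\cdot)$ is a homeomorphism of $G$ onto $u(t,G)$, which contains a uniform neighborhood of $u(t,\bar D)$ (Lemma \ref{L3.1}), for $x$ close enough to $u(t,D)$ the infimum is attained on $u(t,G\setminus D)$, giving $\rho(t,x)=\inf_{z\in G\setminus D}|x-u(t,z)|$. The Hölder bound \eqref{3.1} then yields $|\rho(t,x)-\rho(t_0,x)|\le M_0|t-t_0|^{\a_0}$, while Lipschitz continuity in $x$ is automatic, so $\tilde D_\e=\{\rho>\e\}$ is open. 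For the inclusion, fix $(t,x)\in \tilde D_\e$, set $y=u_n^{-1}(t,x)$, and suppose $z\in D^c$ with $|y-z|<\e/(2M_2)$. Then \eqref{3.73} gives $|u_n(t,z)-x|<\e/2$, and choosing $n$ so large that $\|u_n-u\|_\infty<\e/4$ yields $|u(t,z)-x|<3\e/4$; but $u(t,z)\in u(t,D)^c$ since $u(t,\cdot)$ is a homeomorphism on $G$, contradicting $(t,x)\in\tilde D_\e$. Hence $\tilde u_n^{-1}(\tilde D_\e)\subset \tilde D_{\e/(2M_2)}'$; on this set $u_n$ is $C_b^{1,2}$, so the smooth inverse function theorem lifts $u_n^{-1}$ to $C_b^{1,2}(\tilde D_\e)$, and $|D_x u_n^{-1}|\ge\tfrac12$ is immediate from $|D_x u_n|\le 2$.

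Finally, for the three $L^p$-limits, the common tool is the change of variables with Jacobian in $[\tfrac12,2]$, which shows that the substitutions $y=u_n^{\pm 1}(t,x)$ preserve $L^p$ norms up to a factor $2^{d/p}$. For \eqref{3.88} I pick $f_k\in C_c((0,T_1)\times D)$ with $f_k\to f$ in $L^p$ and split
\[
\|f(t,u_n^{-1})-f(t,u^{-1})\|_{L^p(\tilde D_\e)}\le \|(f-f_k)(t,u_n^{-1})\|+\|(f_k-f)(t,u^{-1})\|+\|f_k(t,u_n^{-1})-f_k(t,u^{-1})\|;
\]
the first two terms are controlled by $2^{d/p}\|f-f_k\|_{L^p((0,T_1)\times D)}$ via $y=u_n^{-1}(t,x)$ (valid because $\tilde u_n^{-1}(\tilde D_\e)\subset \tilde D_{\e/(2M_2)}'\subset(0,T_1)\times D$), while the third tends to $0$ by uniform continuity of $f_k$ and \eqref{3.87}. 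The proofs of \eqref{3.89} and \eqref{3.91} are strictly parallel, now using $y=u_n(t,x)$ or $y=u(t,x)$ and extending $g,g_k$ by $0$ outside $\tilde D$. The main obstacle, as I see it, is the combined openness/inclusion step of the previous paragraph, since both assertions require translating distance estimates between the moving images $u(t,D),u_n(t,D)$ and the fixed domain $D$; once that geometric setup is in place, everything else reduces to changes of variables.
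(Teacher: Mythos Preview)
Your proposal is correct and follows essentially the same approach as the paper: bi-Lipschitz plus uniform convergence of $u_n$ for \eqref{3.87}, the H\"older-in-$t$ estimate \eqref{3.1} for the openness of $\tilde D_\e$, the Lipschitz constant $M_2$ together with $\|u_n-u\|_\infty\to 0$ for the inclusion $\tilde u_n^{-1}(\tilde D_\e)\subset \tilde D_{\e/(2M_2)}'$, and density plus change of variables with Jacobian in $[\tfrac12,2]$ for the three $L^p$ limits. The only point to make explicit in your inclusion step is that the contradicting $z\in D^c$ should be chosen on $\partial D$ (or equal to $y$ if $y\notin D$) so that $z\in G$ and \eqref{3.73} actually applies; the paper sidesteps this by first proving $\tilde u^{-1}(\tilde D_\e)\subset \tilde D_{\e/M_2}'$ via \eqref{3.2} and then invoking the already-established uniform convergence $u_n^{-1}\to u^{-1}$.
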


\begin{proof}
First we show (\ref{3.87}). For any $n\ge N_0$ and $(t,x)\in \tilde G_2$, we have $u_n^{-1}(t,x), u^{-1}(t,x) \in G$ by Lemma \ref{L3.1}. Hence by (\ref{3.64}) and (\ref{3.2}), for $n\ge N_0$ we have
\begin{eqnarray}\label{3.62}
\begin{split}
\sup_{(t,x)\in \tilde G_2}|u^{-1}_n(t,x)-u^{-1}(t,x)|&\le \sup_{(t,x)\in \tilde G_2} \frac{1}{M_1}|u(t,u_n^{-1}(t,x))-u(t,u^{-1}(t,x))|\\
&=\frac{1}{M_1}\sup_{(t,x)\in \tilde G_2}|u(t,u_n^{-1}(t,x))-x|\\
&=\frac{1}{M_1}\sup_{(t,x)\in \tilde G_2}|u(t,u_n^{-1}(t,x))-u_n(t,u_n^{-1}(t,x))|\\
&\le \frac{1}{M_1}\sup_{(t,y)\in [0,T_1]\times G}|u(t,y)-u_n(t,y)| \to 0,
\end{split}
\end{eqnarray}
as $n \to \infty$.

On the other hand, by (\ref{3.63}) and the implicit function theorem, it is easy to see that $u^{-1} \in C_b^{0,1}(\tilde G_2)$, $u^{-1}_n \in  C_b^{1,1}(\tilde G_2) \cap C_b^{1,2}(\tilde u_n((0,T_1)\times D))$ and for any $(t,x) \in \tilde G_2$,
\begin{eqnarray} \label{3.66}
\begin{split}
D_x u^{-1}(t,x)=[(D_x u) (t, u^{-1}(t,x))]^{-1}, \\
D_x u_n^{-1}(t,x)=[(D_x u_n)(t, u_n^{-1}(t,x))]^{-1}, \\
\p_t u_n^{-1}(t,x)=-D_x u_n^{-1}(t,x) \c (\p_t u_n)(t,u_n^{-1}(t,x)).\\
\end{split}
\end{eqnarray}
Combining this with (\ref{3.64}), (\ref{3.63}) and (\ref{3.62}), we obtain
$$\lim_{n \to \infty}\sup_{(t,x)\in \tilde G_2}\|D_xu^{-1}(t,x)-D_xu^{-1}_n(t,x)\|=0.$$
Hence we proved (\ref{3.87}).

\vskip 0.3cm

Given $\e>0$, now we show that $\tilde {D}_{\e}$ is an open set.

By Proposition \ref{C3.2}, we know that $\tilde D$ is open in $\Rdd$. Take $(t_0,x_0) \in \tilde {D}_{\e} \subset \tilde D$, then there exist constants $\eta,\d>0$ such that $(t_0-\eta,t_0+\eta)  \! \times  \! B(x_0, 2\d) \! \subset  \! \tilde D$ and $d(   B(x_0, 2\d),  \! u(t_0,\p D)    ) \!  > \! \e$. By (\ref{3.1}), there exists a positive constant $\eta'<\eta$ such that for any $(t, y) \in (t_0-\eta',t_0+\eta') \times G'$,  $|u(t,y)-u(t_0,y)| < \frac{\d}{2}$. Hence for $t \in (t_0-\eta',t_0+\eta')$,
\begin{eqnarray}   \nonumber
\begin{split}
d(B(x_0, \d), u(t,\p D) ) \ge d(B(x_0, \d), u(t_0, \p D) )-d(u(t_0, \p D), u(t,\p D) ) > \d+\e -\frac{\d}{2} >\e.
\end{split}
\end{eqnarray}
Hence we have $(t_0-\eta',t_0+\eta')\times B(x_0, \d) \subset \tilde {D}_{\e}$, which proves that $\tilde {D}_{\e}$ is open.

\vskip 0.3cm

Noting that $\tilde u(\tilde {D}_{\e}') \subset \tilde {D}_{M_1\e} $ and $\tilde u^{-1}(\tilde {D}_{\e}) \subset \tilde {D}_{\frac{\e}{M_2}}'$ by (\ref{3.2}), together with (\ref{3.64}) and (\ref{3.62}), there exists an integer $N_0(\e) \ge N_0$ such that for any $n\ge N_0(\e)$, we have
\begin{eqnarray}  \label{3.92}
\begin{split}
\tilde u_n(\tilde {D}_{\e}') \subset \tilde {D}_{\frac{M_1\e}{2}} \subset \tilde {D}.
\end{split}
\end{eqnarray}
and
\begin{eqnarray}  \label{3.90}
\begin{split}
 \tilde u_n^{-1}(\tilde {D}_{\e}) \subset \tilde {D}_{\frac{\e}{2M_2}}' \subset (0,T_1)\times D.
\end{split}
\end{eqnarray}
Hence by (\ref{3.63}), (\ref{3.66}) and (\ref{3.90}) we have $u_n^{-1} \in C_b^{1,2}(\tilde {D}_{\e})$ and for any $(t,x)\in \tilde {D}_{\e}$,
\begin{eqnarray}  \label{3.93}
\begin{split}
|D_x u^{-1}_n(t,x)|=|[(D_x u_n)(t, u^{-1}_n(t,x))]^{-1}| \ge \frac{1}{2},  \\
|D_x u^{-1} (t,x)|=|[(D_x u)(t, u^{-1} (t,x))]^{-1}| \ge \frac{1}{2}.
\end{split}
\end{eqnarray}

\vskip 0.3cm

By (\ref{3.63}) and (\ref{3.92}), it is easy to see that (\ref{3.91}) holds.

Now we show (\ref{3.88}).
For any $\e_1>0$, there exists a  function $\tilde f \in C_b(\Rd)$  such that $\|f-\tilde f \|_{L^p(\tilde {D}_{\frac{\e}{2M_2}}')} < \e_1$.
 Combining this with (\ref{3.62}), (\ref{3.90}), (\ref{3.93}) and a change of variable, we see that
\begin{eqnarray}  \nonumber
\begin{split}
&\ \ \ \ \lim_{n \to \infty}\|f(t,u_n^{-1}(t,x) )-f(t,u^{-1}(t,x)) \|_{L^p(\tilde {D}_{\e})} \\
&\le \lim_{n \to \infty}\|f(t,u_n^{-1}(t,x) )-\tilde f(t,u_n^{-1}(t,x)) \|_{L^p(\tilde {D}_{\e})} +\lim_{n \to \infty}\|f(t,u^{-1}(t,x) )-\tilde f(t,u^{-1}(t,x)) \|_{L^p(\tilde {D}_{\e})}  \\
&\ \ \ \ + \lim_{n \to \infty}\|\tilde f(t,u^{-1}_n(t,x) )-\tilde f(t,u^{-1}(t,x)) \|_{L^p(\tilde {D}_{\e})}\\
&\le \lim_{n \to \infty} 2 \| |f(t,u^{-1}_n(t,x) )-\tilde f(t,u^{-1}_n(t,x))| |D_x u^{-1}_n(t,x) | \|_{L^p(\tilde {D}_{\e})}  \\
&\ \ \ \ +\lim_{n \to \infty} 2 \| |f(t,u^{-1}(t,x) )-\tilde f(t,u^{-1}(t,x))| |D_x u^{-1}(t,x) | \|_{L^p(\tilde {D}_{\e})} \\
&\le \lim_{n \to \infty} 4 \| |f(t,x)-\tilde f(t,x)| \|_{L^p(\tilde {D}_{\frac{\e}{2M_2}}')} <4\e_1.\\
\end{split}
\end{eqnarray}
Since $\e_1$ is arbitrary, (\ref{3.88}) follows. By a similar argument, we can show (\ref{3.89}).
\end{proof}

\vskip 0.4cm

The following result provides the regularities of the reflecting directions $\g_n$ and $\g$.

\begin{prp} \label{P3.3}
For $n\ge N_0$, we have
$$\g_n\in C_b^{1,1}([0,{T_1}] \times \Rd) , \ \g \in W_{2d+2}^{1,2}(\tilde D) \bigcap C^{0,1}_b([0,{T_1}] \times \Rd),$$
and
\begin{eqnarray} \label{2-17-2}
\begin{split}
\lim_{n \to \infty}\|\g_n-\g\|_{C^{0,1}_b([0,{T_1}] \times \Rd)}=0.
\end{split}
\end{eqnarray}
Moreover, for any $\e >0$ and $n\ge N_0(\e)$, we have $\g_n\in C_b^{1,2}(\tilde {D}_{\e})$ and
\begin{eqnarray}
\lim_{n,m \to \infty}\|\g_n- \g_m\|_{W_{2d+2}^{1,2}(\tilde {D}_{\e})}=0,     \label{3.68}    \\
\sup_{\e>0}\sup_{n\ge N_0(\e)} \|\g_n\|_ {W_{2d+2}^{1,2}(\tilde {D}_{\e})}<\infty, \label{3.79}
\end{eqnarray}
where $N_0(\e)$ was defined in Lemma \ref{L3.2.2}.
\end{prp}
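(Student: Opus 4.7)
The plan is to derive all regularity and convergence claims from the chain rule applied to $\gamma_n = (n \circ u_n^{-1})\phi$ and $\gamma = (n \circ u^{-1})\phi$, using the smoothness of $n \in C_0^2(\RR^d)$ and $\phi \in C_b^\infty$ together with Lemma \ref{L3.2.2}. On $\tilde G_2$, $u_n^{-1} \in C_b^{1,1}$ and $u^{-1} \in C_b^{0,1}$, so composition with the $C^2$-function $n$ gives $\gamma_n \in C_b^{1,1}(\tilde G_2)$ and $\gamma \in C_b^{0,1}(\tilde G_2)$; outside $\tilde G_2$ the cutoff $\phi$ vanishes, so both functions extend by $0$ to $[0,T_1] \times \RR^d$ with the same regularity. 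The uniform convergence (\ref{2-17-2}) follows at once from (\ref{3.87}) and the uniform continuity of $n$ and $Dn$. Because $\tilde D_\epsilon \subset \tilde D \subset \tilde G_1$ and $\phi \equiv 1$ on $\tilde G_1$, on $\tilde D_\epsilon$ we simply have $\gamma_n = n \circ u_n^{-1}$, and the inclusion $u_n^{-1} \in C_b^{1,2}(\tilde D_\epsilon)$ from Lemma \ref{L3.2.2} gives $\gamma_n \in C_b^{1,2}(\tilde D_\epsilon)$ for $n \geq N_0(\epsilon)$.

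For the uniform bound (\ref{3.79}), I would compute on $\tilde D_\epsilon$
\[
\partial_t \gamma_n = Dn(u_n^{-1})\, \partial_t u_n^{-1}, \qquad
\partial_{x_i}\partial_{x_j} \gamma_n = D^2 n(u_n^{-1})[\partial_{x_i} u_n^{-1}, \partial_{x_j} u_n^{-1}] + Dn(u_n^{-1})\, \partial_{x_i}\partial_{x_j} u_n^{-1},
\]
and use (\ref{3.66}) together with its second-order analogue (obtained by differentiating $D_x u_n(t, u_n^{-1}) \cdot D_x u_n^{-1} = I$ once more in $x$) to express $\partial_t u_n^{-1}$ and $\partial_{x_i}\partial_{x_j} u_n^{-1}$ as polynomial combinations of the uniformly bounded matrix $D_x u_n^{-1}$ (controlled by (\ref{3.93}) and Proposition \ref{C3.2}) with $\partial_t u_n(t, u_n^{-1})$ and $\partial_{x_a}\partial_{x_b} u_n(t, u_n^{-1})$. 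Changing variables $y = u_n^{-1}(t,x)$, invoking (\ref{3.90}) to contain the image inside $\tilde D_{\epsilon/(2M_2)}' \subset (0,T_1) \times D$, and bounding the Jacobian $|D_y u_n|$ by $2^d$ via (\ref{3.63}), each $L^{2d+2}(\tilde D_\epsilon)$-norm is majorised by a constant multiple of $\|u_n\|_{W^{1,2}_{2d+2}((0,T_1) \times D)}$; the latter is uniformly bounded in $n$ by (\ref{3.77}), with a bound independent of $\epsilon$.

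The principal difficulty is the Cauchy property (\ref{3.68}). Expanding $\gamma_n - \gamma_m$ and its derivatives through the chain rule produces terms of two kinds. The \emph{easy} terms carry a factor of the form $D^\beta n(u_n^{-1}) - D^\beta n(u_m^{-1})$, $\partial_{x_i} u_n^{-1} - \partial_{x_i} u_m^{-1}$, or a corresponding difference of $(D_x u_n)^{-1}$-type factors; by (\ref{3.87}) and the uniform continuity of $n, Dn, D^2 n$ these tend to $0$ in sup norm, and multiplying them against factors uniformly bounded in $L^{2d+2}(\tilde D_\epsilon)$ (by the previous paragraph) forces $L^{2d+2}$-convergence to $0$. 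The genuinely delicate quantities that remain are $\partial_t u_n \circ u_n^{-1} - \partial_t u_m \circ u_m^{-1}$ and $\partial_{x_a}\partial_{x_b} u_n \circ u_n^{-1} - \partial_{x_a}\partial_{x_b} u_m \circ u_m^{-1}$, which I handle via the three-way splitting
\[
\partial_{x_a}\partial_{x_b} u_n \circ u_n^{-1} - \partial_{x_a}\partial_{x_b} u_m \circ u_m^{-1} = \bigl[\partial_{x_a}\partial_{x_b}(u_n - u)\bigr] \circ u_n^{-1} + \partial_{x_a}\partial_{x_b} u \circ u_n^{-1} - \partial_{x_a}\partial_{x_b} u \circ u_m^{-1} - \bigl[\partial_{x_a}\partial_{x_b}(u_m - u)\bigr] \circ u_m^{-1}.
\]
The outer two terms tend to $0$ in $L^{2d+2}(\tilde D_\epsilon)$ by the change-of-variables argument of the previous paragraph applied to $\partial_{x_a}\partial_{x_b}(u_n - u) \to 0$ in $L^{2d+2}((0,T_1) \times D)$ via (\ref{3.77}), while the middle pair tends to $0$ by applying (\ref{3.88}) with $f = \partial_{x_a}\partial_{x_b} u$ and letting $n, m \to \infty$ separately. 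The analogous argument for $\partial_t u_n \circ u_n^{-1}$ uses (\ref{3.77}) and (\ref{3.88}) with $f = \partial_t u$.

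Finally, to obtain the membership $\gamma \in W^{1,2}_{2d+2}(\tilde D)$ asserted in the first display: on each $\tilde D_\epsilon$ the sequence $(\gamma_n)_{n \geq N_0(\epsilon)}$ is Cauchy in the complete space $W^{1,2}_{2d+2}(\tilde D_\epsilon)$ and converges uniformly to $\gamma$, hence $\gamma \in W^{1,2}_{2d+2}(\tilde D_\epsilon)$ with norm controlled by the $\epsilon$-uniform constant from (\ref{3.79}); writing $\tilde D = \bigcup_{\epsilon > 0} \tilde D_\epsilon$ and passing $\epsilon \downarrow 0$ by monotone convergence yields $\gamma \in W^{1,2}_{2d+2}(\tilde D)$.
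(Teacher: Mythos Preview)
Your proposal is correct and follows essentially the same approach as the paper: both proofs rely on the chain rule together with (\ref{3.66}) to reduce derivatives of $\gamma_n$ to expressions in $\partial_t u_n\circ u_n^{-1}$ and $\partial_{x_a}\partial_{x_b}u_n\circ u_n^{-1}$, then use the change of variables $y=u_n^{-1}(t,x)$ controlled by (\ref{3.63}) and (\ref{3.90}) to reduce to $\|u_n\|_{W^{1,2}_{2d+2}((0,T_1)\times D)}$, and invoke (\ref{3.77}) and (\ref{3.88}) for the convergence of the ``delicate'' pieces. The only cosmetic difference is that the paper shows directly $\partial_{x_a}\partial_{x_b}u_n\circ u_n^{-1}\to \partial_{x_a}\partial_{x_b}u\circ u^{-1}$ in $L^{2d+2}(\tilde D_\e)$ via a two-term split (insert $\partial_{x_a}\partial_{x_b}u\circ u_n^{-1}$), whereas you insert the limit twice to get a three-term split and verify the Cauchy property in $n,m$; these are equivalent. (One trivial slip: $|D_y u_n|$ denotes the determinant in this paper and is bounded by $2$, not $2^d$, by (\ref{3.63}).)
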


\begin{proof}
It is evident that $\g \in W_{2d+2}^{1,2}(\tilde D)$ if (\ref{2-17-2})-(\ref{3.79}) hold. Hence by Lemma \ref{L3.2.2} we only need to prove (\ref{3.68}) and (\ref{3.79}).

\vskip 0.3cm

First we show (\ref{3.68}). By (\ref{3.77}), Lemma \ref{L3.2.2} and a change of variable, we see that for any $1\le i,j \le d$,
\begin{eqnarray}  \label{3.71}  \nonumber
\begin{split}
& \ \ \ \ \lim_{n \to \infty}\|(\p_{x_j} \p_{x_i} u_n)(t,u_n^{-1}(t,x))-(\p_{x_j} \p_{x_i} u)(t,u^{-1}(t,x)) \|_{L^{2d+2}(\tilde {D}_{\e})} \\
&\le \lim_{n \to \infty}\| (\p_{x_j} \p_{x_i} u_n)(t,u_n^{-1}(t,x))-(\p_{x_j} \p_{x_i} u)(t,u_n^{-1}(t,x))\|_{L^{2d+2}(\tilde {D}_{\e})}\\
& \ \ \ \ +\lim_{n \to \infty}\| (\p_{x_j} \p_{x_i} u)(t,u_n^{-1}(t,x))-(\p_{x_j} \p_{x_i} u)(t,u^{-1}(t,x))\|_{L^{2d+2}(\tilde {D}_{\e})}\\
&= \lim_{n \to \infty}\| (\p_{x_j} \p_{x_i} u_n)(t,u_n^{-1}(t,x))-(\p_{x_j} \p_{x_i} u)(t,u_n^{-1}(t,x))\|_{L^{2d+2}(\tilde {D}_{\e})}\\
&\le 2\lim_{n \to \infty}\||(\p_{x_j} \p_{x_i} u_n)(t,u_n^{-1}(t,x))-(\p_{x_j} \p_{x_i} u)(t,u_n^{-1}(t,x))| |D_xu_n^{-1}(t,x)|\|_{L^{2d+2}(\tilde {D}_{\e})}\\
&\le 2 \lim_{n \to \infty}\| |\p_{x_j} \p_{x_i} u_n(t,x)-(\p_{x_j} \p_{x_i} u)(t,x)| \|_{L^{2d+2}( \tilde {D}_{\frac{\e}{2 M_2}}')}=0.
\end{split}
\end{eqnarray}
Combining this with (\ref{3.63}), (\ref{3.87}) and (\ref{3.66}), we obtain
\begin{eqnarray} \label{3.67}
\begin{split}
\lim_{n,m \to \infty}\|\p_{x_j} \p_{x_i}\p\g_n- \p_{x_j} \p_{x_i}\g_m\|_{L^{2d+2}(\tilde {D}_{\e})}=0.
\end{split}
\end{eqnarray}
By (\ref{3.66}), similar to the proof of (\ref{3.67}), we also have
\begin{eqnarray} \label{3.69}  \nonumber
\begin{split}
\lim_{n,m \to \infty}\|\p_t \g_n- \p_t \g_m\|_{L^{2d+2}(\tilde {D}_{\e})}=0.
\end{split}
\end{eqnarray}
Hence (\ref{3.68}) follows.

\vskip 0.3cm

Now we show (\ref{3.79}). Note that  by Lemma \ref{L3.2.2}  and a change of variable,
\begin{eqnarray} \nonumber
\begin{split}
& \ \ \ \ \sup_{ \e>0,n\ge N_0(\e)}(\|\p_{x_j} \p_{x_i} u_n(t,u_n^{-1}(t,x))\|_{L^{2d+2}(\tilde {D}_{\e})}+\|\p_t u_n (t,u_n^{-1}(t,x)) \|_{L^{2d+2}(\tilde {D}_{\e})})\\
&\le 2 \sup_{\e>0,n\ge N_0(\e)}(\| \p_{x_j} \p_{x_i} u_n(t,x) \|_{L^{2d+2}(  \tilde {D}_{\frac{\e}{2 M_2}}' )}+\|\p_t u_n (t,x)  \|_{L^{2d+2}( \tilde {D}_{\frac{\e}{2 M_2}}'  )})\\
&\le 2 \sup_{n \ge 1}\|u_n(t,x) \|_{W^{1,2}_{2d+2}( (0,T_1) \times D )}<\infty,
\end{split}
\end{eqnarray}
combining this with (\ref{3.63}), (\ref{3.66}) and the boundness of $\| \nabla _x u_n^{-1}(t,x)\|$, we obtain (\ref{3.79}).
\end{proof}

\begin{remark}
{As $u_n$ is not in $C^{1,2}((0,T_1) \times G)$, $\g_n$ does not belong to $C^{1,2}(\tilde D)$.}
\end{remark}

\vskip 0.4cm

By (\ref{3.89}), (\ref{3.91}) and Theorem 7.9 in \cite{Gilbarg}, the following lemma is immediate.

\begin{lem} \label{L3.5}
For any $F \in W_{2d+2}^{1,2}(\tilde D) \bigcap C^{0,1}_b(\tilde D)$, we have
$$F(t,u(t,x))\in W_{2d+2}^{1,2}((0,T_1)\times D),$$
and moreover the chain rule of weak differentiation holds for $F(t,u(t,x))$.
\end{lem}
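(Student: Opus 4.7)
The strategy is to reduce to the smooth case by approximation, where the chain rule is classical, and then pass to the limit using the composition-continuity established in Lemma \ref{L3.2.2}.

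First I would select a sequence of smooth functions $F_k \in C^\infty(\tilde D) \cap W_{2d+2}^{1,2}(\tilde D)$ with $F_k \to F$ in $W_{2d+2}^{1,2}(\tilde D)$, while keeping $\|F_k\|_\infty + \|\nabla F_k\|_\infty$ uniformly bounded in terms of the Lipschitz constant of $F$; this is accomplished by mollifying a Lipschitz extension of $F$. Since $u \in W_{2d+2}^{1,2}((0,T_1)\times D) \cap C^{0,1}([0,T_1]\times G')$ by Lemma \ref{L3.1.1}, the Sobolev chain rule (Theorem 7.9 in \cite{Gilbarg}, applied componentwise to the vector-valued $u$) yields, for each smooth $F_k$,
\begin{align*}
\partial_t[F_k(t,u(t,x))] &= (\partial_t F_k)(t,u) + \sum_l (\partial_{y_l} F_k)(t,u)\,\partial_t u^l, \\
\partial_{x_i}\partial_{x_j}[F_k(t,u(t,x))] &= \sum_{a,b}(\partial_{y_a}\partial_{y_b} F_k)(t,u)\,\partial_{x_i}u^a\,\partial_{x_j}u^b + \sum_l (\partial_{y_l}F_k)(t,u)\,\partial_{x_i}\partial_{x_j}u^l,
\end{align*}
so that $F_k(t,u(t,x)) \in W_{2d+2}^{1,2}((0,T_1)\times D)$.

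Next I would pass to the limit $k \to \infty$. The bi-Lipschitzness of $u(t,\cdot)$ from Proposition \ref{C3.2} together with the Jacobian bound $|D_x u^{-1}(t,y)| \le 2$ implicit in (\ref{3.63}) yields the global change-of-variable estimate $\|g(t,u(t,x))\|_{L^{2d+2}((0,T_1)\times D)} \lesssim \|g\|_{L^{2d+2}(\tilde D)}$ for every $g \in L^{2d+2}(\tilde D)$, which extends the local statements (\ref{3.89}) and (\ref{3.91}) of Lemma \ref{L3.2.2} to the full domain. Applied to $g=F_k-F$, $g=\partial_t F_k - \partial_t F$, and $g=\partial_{y_a}\partial_{y_b}(F_k-F)$, this transfers the $W_{2d+2}^{1,2}(\tilde D)$-convergence of $F_k$ into convergence of every composed factor in $L^{2d+2}((0,T_1)\times D)$. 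Since the remaining factors in each chain-rule term are either uniformly bounded ($(\partial_{y_l}F_k)(t,u)$ and the first spatial derivatives $\partial_{x_i}u^a$) or lie in $L^{2d+2}$ (namely $\partial_{x_i}\partial_{x_j}u^l$ and $\partial_t u^l$), H\"older's inequality yields convergence of the full chain-rule expressions in $L^{2d+2}((0,T_1)\times D)$. Hence $\{F_k(t,u(t,x))\}$ is Cauchy in $W_{2d+2}^{1,2}((0,T_1)\times D)$, its limit is $F(t,u(t,x))$, and the chain rule passes to the limit, giving both claims.

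The main point requiring care is that (\ref{3.89}) and (\ref{3.91}) are only stated on the exhausting family $\tilde D_\e'$, while the conclusion requires the composition to behave well on the whole $(0,T_1)\times D$. This is resolved by the uniformity of the Jacobian bound $|D_x u^{-1}|\le 2$ up to the boundary, which makes the change-of-variable estimate global; the alternative, exhausting by $\tilde D_\e'$ and controlling the remainder via the uniform Lipschitz bound on $F_k$, works equally well.
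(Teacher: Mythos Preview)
Your argument is correct and aligns with the paper's own justification, which simply declares the lemma ``immediate'' from (\ref{3.89}), (\ref{3.91}) and Theorem~7.9 in \cite{Gilbarg}; you have essentially unpacked that one-line reference. The only minor difference in emphasis is that the paper's citations point toward using the smooth inner approximations $u_n$ of $u$ (this is what (\ref{3.89}) encodes) together with an approximation of the $L^{2d+2}$ derivatives of $F$ (this is (\ref{3.91})), whereas you approximate only $F$ and rely on Theorem~7.9 to handle the Sobolev regularity of $u$ directly---but the two routes are interchangeable and your observation that the Jacobian bound (\ref{3.63}) globalizes the change-of-variable estimate beyond the exhaustion $\tilde D_\e'$ is exactly what makes either version go through on all of $(0,T_1)\times D$.
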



\vskip 0.5cm

We close this section by showing the following Lemma. The estimates listed will be used in later sections.
\begin{lem} \label{L3.2}
Fix $\t_1  \in (0,\frac{\t_0}{2} \wedge \arctan \frac{1}{24} )$ satisfying
\begin{eqnarray}
\cos^2 \t_1 +(\frac{\cos \t_1  -(2-2\cos \t_1 )^{\frac{1}{2}} }{1+12 \tan \t_1 }-(2-2\cos \t_1 )^{\frac{1}{2}} )^2\ge 1, \label{2.5} \\
\frac{\cos \t_1  -(2-2\cos \t_1 )^{\frac{1}{2}} }{1+12 \tan \t_1 }-(2-2\cos \t_1 )^{\frac{1}{2}} \ge \cos \frac{\t_0}{2},\label{2.6} \\
\frac{ (1-4 \tan^2 \t_1 )^{\frac{1}{2}} \cos \t_1  - 2 \tan \t_1   -\frac{1}{2}}{(\frac{5}{4}+ 4  \tan^2 \t_1 +2 \tan \t_1 - (1-4 \tan^2 \t_1 )^{\frac{1}{2}} \cos \t_1 )^{\frac{1}{2}} } >\cos \t_0, \label{2.17} \\
\frac{ (1-4 \tan^2 \t_1 )^{\frac{1}{2}}  \cos \t_1  -2 \tan \t_1  +\frac{1}{2} \cos \t_1 }{(\frac{9}{4}+4  \tan^2 \t_1  +2 \tan \t_1 )^{\frac{1}{2}}} >\cos \t_0. \label{2-3}
\end{eqnarray}
Then there exist constants $0<\d_3<  \d_2,\eta_0>0$ and an integer $N_1\ge N_0$ such that for any $t_0\in [0,{T_1}]$, $z_0 \in u(t_0,\partial D)$ and $n\ge N_1$, if $t,t'\in [(t_0-\eta_0)\vee 0,(t_0+\eta_0)\wedge T_1]$ and $x,x'\in B(z_0,\d_3)$, then $|\g_n(t,x)|=|\g(t,x)|=1$ and
\begin{eqnarray} \label{3.4}
\begin{split}
\g(t,x)\cdot \g_n(t',x') \ge \cos \t_1 , \ \g(t,x)\cdot \g(t',x') \ge \cos \t_1 , \ \g_n(t,x)\cdot \g_n(t',x') \ge \cos \t_1 .
\end{split}
\end{eqnarray}
\end{lem}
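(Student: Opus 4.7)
The strategy is to reduce (\ref{3.4}) to the local constancy of the vector field $n$ near the boundary point $y_0 := u^{-1}(t_0,z_0)\in\partial D$, exploiting the uniform Lipschitz continuity of $u^{-1}$ on $\tilde G_2$ (Lemma \ref{L3.2.2}) and the uniform convergence $u_n^{-1}\to u^{-1}$ in (\ref{3.87}). First I would establish the normalization $|\gamma(t,x)|=|\gamma_n(t,x)|=1$ in a neighborhood of $(t_0,z_0)$. Since $z_0\in u(t_0,\partial D)\subset\overline{u(t_0,D)}$, we have $d(z_0,u(t_0,D))=0$; combining $d(x,u(t,D))\le |x-z_0|+\sup_{y\in G'}|u(t,y)-u(t_0,y)|$ with the H\"older estimate (\ref{3.1}) and (\ref{3.64}) shows that $(t,x)\in\tilde G_1$, and hence $\phi(t,x)=1$, when $(t,x)$ lies in a (uniform) cylinder around $(t_0,z_0)$. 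By continuity of $u^{-1}$ on $\tilde G_2$, $u^{-1}(t,x)$ lies in $\Gamma_{\delta_0/2}$ there, so $|n(u^{-1}(t,x))|=1$ and the first assertion follows; the analogous statement for $\gamma_n$ is obtained by using (\ref{3.87}) to replace $u^{-1}$ by $u_n^{-1}$ once $n\ge N_1$ with $N_1$ taken so that $\|u_n^{-1}-u^{-1}\|_\infty$ is sufficiently small.

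For the inner product bound I would argue by continuity around the unit vector $n(y_0)$. The $C^2$ extension of $n$ gives a modulus of continuity $\omega$ on the bounded set $\overline{G'}$; combined with the uniform Lipschitz bound on $u^{-1}$ from Lemma \ref{L3.2.2} and with the uniform convergence (\ref{3.87}), for any prescribed $\varepsilon>0$ we can choose $\delta_3,\eta_0>0$ and $N_1\ge N_0$ (all independent of $(t_0,z_0)$) so that
\begin{equation*}
|\gamma(t,x)-n(y_0)|<\varepsilon,\qquad |\gamma_n(t,x)-n(y_0)|<\varepsilon
\end{equation*}
whenever $|t-t_0|<\eta_0$, $|x-z_0|<\delta_3$ and $n\ge N_1$. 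Since $|n(y_0)|=1$ and both $\gamma$ and $\gamma_n$ are unit vectors on this neighborhood (by the previous step), for two such vectors $a,b$,
\begin{equation*}
a\cdot b=|n(y_0)|^2+(a-n(y_0))\cdot b+n(y_0)\cdot(b-n(y_0))\ge 1-2\varepsilon.
\end{equation*}
Choosing $\varepsilon\le (1-\cos\theta_1)/2$ yields all three estimates in (\ref{3.4}) simultaneously.

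The main obstacle is keeping the constants $\delta_3,\eta_0,N_1$ uniform in $(t_0,z_0)$ as $t_0$ ranges over $[0,T_1]$ and $z_0$ over $u(t_0,\partial D)$. This uniformity is not a real issue here because the relevant moduli of continuity are global: $u^{-1}\in C^{0,1}_b(\tilde G_2)$ with a Lipschitz constant independent of $(t_0,z_0)$, the convergence (\ref{3.87}) is uniform on all of $\tilde G_2$, and $n$ is uniformly continuous on compact sets. The inequalities (\ref{2.5})--(\ref{2-3}) on $\theta_1$ play no role in the argument for (\ref{3.4}) itself; they only ensure that a suitable $\theta_1\in(0,\theta_0/2\wedge\arctan(1/24))$ exists (and will be consumed in later sections), which is immediate since all four inequalities hold in the limit $\theta_1\downarrow 0$.
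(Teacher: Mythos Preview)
Your proposal is correct and follows essentially the same approach as the paper. The paper's own proof is very terse: it invokes (\ref{3.2}) and (\ref{3.87}) to get $u_n^{-1}(t,x)\in\Gamma_{\delta_0/2}$ (hence $|\gamma|=|\gamma_n|=1$), and then simply says that (\ref{3.4}) ``follows from (\ref{2-17-2})'', i.e.\ from the uniform $C^{0,1}_b$-convergence $\gamma_n\to\gamma$ combined with the uniform continuity of $\gamma$ --- exactly the mechanism you spell out via $a\cdot b\ge 1-2\varepsilon$. Your observations that the uniformity in $(t_0,z_0)$ comes for free from the global nature of the estimates, and that the inequalities (\ref{2.5})--(\ref{2-3}) play no role in proving (\ref{3.4}) itself, are both correct.
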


\begin{proof}
Recall that $\G_{c}=\{x\in \Rd: d(x, \p D)<c\}$ for $c>0$ and $|n(x)|=1$ on $\G_{\frac{\delta_0}{2}}$.
By (\ref{3.2}) and (\ref{3.87}), there exists a constant $\d \in(0,  \d_2)$ such that for sufficiently large $n$ and for $t\in [ 0,  T_1]$, if $d(x,u(t,\p D))<\d$, then $(t,x) \in \tilde G_1$ and $u^{-1}_n(t,x) \in \G_{\frac{\delta_0}{2}}$. It implies that $|\g_n(t,x)|=|\g(t,x)|=1$ by the definition of $\g_n$ and $\g$. (\ref{3.4}) follows from (\ref{2-17-2}).
\end{proof}

\vskip 0.4cm
\section{Flows associated with the time dependent reflecting directions }
In this section, we consider the flows associated with the time dependent vector fields of reflecting directions. We will provide a number of regularity results of the hitting times of the flows on certain hyperplane. These hitting times  will be used to construct test functions
in next section for proving the pathwise uniqueness of the transformed reflecting SDEs.
\vskip 0.4cm
Let $N_0$ and $T_1$ be fixed as in Section 3. For $(t,x) \in [0,{T_1}] \times \Rd$ and $n \ge N_0$, let $y(t,x, \cdot)$ be the solution of the following ordinary differential equation:
\begin{align}
 \left\{
\begin{aligned}
& y(t, x, 0)=x,  \nonumber\\
&  \partial_{r} y(t,x, r)=\g(t, y(t, x,r)), \ r \in \R.  \nonumber
\end{aligned}
  \right.
\end{align}
and
$y_{n}(t,x, \cdot)$ the solution of the following ordinary differential equation:
\begin{align}
 \left\{
\begin{aligned}
& y_{n}(t, x, 0)=x,  \nonumber\\
&  \partial_{r} y_{n}(t,x, r)=\g_{n}(t, y_{n}(t, x,r)), \ r \in \R.  \nonumber\\
\end{aligned}
  \right.
\end{align}
Since $\g \in C^{0,1}_b([0,T_1] \times \Rd)$ and $\g_n \in C^{1,1}_b([0,T_1] \times \Rd)$ by Proposition \ref{P3.3},  we see that $y(\c,\c,\c)$ belongs to $C^{0,1,1}([0,T_1] \times \Rd \times \R)$ and $y_n(\c,\c,\c)$ belongs to $C^{1,1,1}([0,T_1] \times \Rd \times \R)$. Moreover, $\psi_i^j(t,x,r):=\partial_{x_i}y^j(t,x,r)$ is the solution to the following equation:
\begin{align}  \label{3.7}
 \left\{
\begin{aligned}
& \psi_i^j(t, x, 0)=\d_i(j),\\
&  \partial_{r} \psi_i^j(t,x, r)=\sum_{1\le k \le d}\partial_{y_k}\g^j(t, y(t, x,r))\psi_i^k(t,x, r), \ r \in \R,
\end{aligned}
  \right.
\end{align}
$\psi_{n,i}^j(t,x,r):=\partial_{x_i}y_n^j(t,x,r)$ is the solution to the following equation:
\begin{align}  \label{4.53}
 \left\{
\begin{aligned}
& \psi_{n,i}^j(t, x, 0)=\d_i(j),\\
&  \partial_{r} \psi_{n,i}^j(t,x, r)=\sum_{1\le k \le d}\partial_{y_k}\g_n^j(t, y_n(t, x,r))\psi_{n,i}^k(t,x, r), \ r \in \R,
\end{aligned}
  \right.
\end{align}
and $\Lambda_n(t,x,r):=\partial_{t}y_n(t,x,r)$ is the solution to the following equation:
\begin{align}  \label{3.10}
 \left\{
\begin{aligned}
& \Lambda_{n}(t, x, 0)=0,\\
&  \partial_{r} \Lambda_{n}(t, x, r)= (\partial_{t} \g_{n} ) (t, y_{n}(t, x, r) )+D_{y} \g_{n} (t, y_{n}(t, x, r) )\c \Lambda_{n}(t, x, r), \ r \in \R,
\end{aligned}
  \right.
\end{align}
where $y^j(t,x,r)$ and $y_n^j(t,x,r)$ are the $j$-th components of $y(t,x,r)$ and $y_n(t,x,r)$ respectively, $\d_i(j):=1$ if $i=j$ and $\d_i(j):=0$ otherwise. By (\ref{2-17-2}) and the Gronwall's inequality, it is easy to see that for any $c>0$ and $1 \le i,j \le d$,
\begin{eqnarray}
\sup _{(t, x)\in  [0,{T_1}] \times \Rd , r \in (-c,c), n \ge N_0} | \psi_{n,i}^j(t, x, r)| <\infty ,   \label{4.4}   \\
\lim_{n\to \infty}\sup_{(t,x)\in [0,{T_1}] \times \Rd , r \in (-c,c)}|y_n(t,x,r)-y(t,x,r)|=0, \label{2.29} \\
\lim_{n \to \infty}\sup _{(t, x)\in  [0,{T_1}] \times \Rd , r \in (-c,c)} | \psi_{n,i}^j(t, x, r)-\psi_{i}^j(t, x, r)| =0. \label{3.30}
\end{eqnarray}

First we have the following simple lemma.

\begin{lem} \label{L4.2}
There exists a constant $\rho_0 \in (0,1)$ such that for any $n\ge N_0$, $r \in (-\rho_0,\rho_0)$, bounded measurable function $f(x)$ and open set $A \subset \Rd$, we have
\begin{eqnarray}\label{3.16}
\begin{split}
\int_{A}f(y_n(t, x ,r))dx \le 2 \int_{y_n(t,A,r)}f(x)dx.
\end{split}
\end{eqnarray}
\end{lem}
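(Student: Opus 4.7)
The plan is a standard change-of-variables argument, with $\rho_0$ chosen so that the Jacobian of the flow $x\mapsto y_n(t,x,r)$ stays uniformly close to $1$ on the range $|r|<\rho_0$. The inequality only makes sense for $f\ge 0$ (otherwise apply the statement to $|f|$), which I will assume throughout. By Proposition \ref{P3.3}, $\|\g_n-\g\|_{C^{0,1}_b([0,T_1]\times\Rd)}\to 0$ as $n\to\infty$ and $\g\in C_b^{0,1}([0,T_1]\times\Rd)$, so
$$K:=\sup_{n\ge N_0}\sup_{(t,y)\in[0,T_1]\times\Rd}\|D_y\g_n(t,y)\|<\infty.$$

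First I would view the system (\ref{4.53}) as a linear matrix ODE for $\Psi_n(t,x,r):=D_xy_n(t,x,r)$ with entries $\psi_{n,i}^j(t,x,r)$, namely
$$\p_r \Psi_n(t,x,r)=A_n(t,x,r)\,\Psi_n(t,x,r),\qquad \Psi_n(t,x,0)=I,$$
where $A_n(t,x,r):=D_y\g_n(t,y_n(t,x,r))$. Liouville's formula then gives
$$|\Psi_n(t,x,r)|=\exp\!\left(\int_0^r (\mathrm{div}_y\,\g_n)(t,y_n(t,x,s))\,ds\right),$$
so $e^{-dK|r|}\le |\Psi_n(t,x,r)|\le e^{dK|r|}$, uniformly in $(t,x)$ and in $n\ge N_0$.

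Second, for each fixed $t$, the flow property of the ODE defining $y_n$ shows that $x\mapsto y_n(t,x,r)$ is a $C^1$-diffeomorphism of $\Rd$ whose inverse is $y\mapsto y_n(t,y,-r)$. The change of variables $y=y_n(t,x,r)$ (so that $x=y_n(t,y,-r)$ and $dx=|\Psi_n(t,y,-r)|\,dy$) gives
$$\int_A f(y_n(t,x,r))\,dx=\int_{y_n(t,A,r)} f(y)\,|\Psi_n(t,y,-r)|\,dy.$$
It therefore suffices to choose $\rho_0\in(0,1)$ with $e^{dK\rho_0}\le 2$: then $|\Psi_n(t,y,-r)|\le 2$ for every $|r|<\rho_0$, $n\ge N_0$, $t\in[0,T_1]$ and $y\in\Rd$, and (\ref{3.16}) follows immediately.

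The only real content is the uniform-in-$n$ bound $K$ on $|D_y\g_n|$, which is exactly what (\ref{2-17-2}) in Proposition \ref{P3.3} delivers; everything else is just Liouville's formula combined with the standard change of variables for the $C^1$-diffeomorphism generated by the ODE. I expect no genuine obstacle beyond being careful that $\rho_0$ depends only on $K$ and $d$, hence is universal.
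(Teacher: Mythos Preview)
Your proof is correct and follows essentially the same change-of-variables idea as the paper: both arguments show that the Jacobian determinant of $x\mapsto y_n(t,x,r)$ stays uniformly between $\tfrac12$ and $2$ for $|r|<\rho_0$ and $n\ge N_0$, then invoke the standard substitution formula. The only cosmetic difference is that the paper bounds $|\Psi_n(t,x,r)-I|$ directly from the integral form of (\ref{4.53}) together with (\ref{4.4}), whereas you appeal to Liouville's formula for the determinant; both rely on the same uniform bound on $\|D_y\g_n\|$ supplied by Proposition~\ref{P3.3}.
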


\begin{proof}
By (\ref{4.53}), it is easy to see that for  $n\ge N_0$ and $r \in (-1,1)$,
\begin{eqnarray}\label{3.16-1}
\begin{split}
\sup_{(t, x)\in  [0,{T_1}] \times \Rd}|\psi_{n,i}^j(t,x,r)-\psi_{n,i}^j(t,x,0)|\le d |r|  \|\g_n\|_{C^{0,1}_b( [0,{T_1}] \times \Rd)}  \sup_{(t, x)\in  [0,{T_1}] \times \Rd,\atop |\tau|< 1,1\le k \le d} |\psi^{k}_{n,i}(t, x, \tau)|.
\end{split}
\end{eqnarray}
 Since $(\psi_{n,i}^j(t,x,0))_{1\le i,j\le d}$ is the identity matrix, it follows from (\ref{4.4}) and (\ref{3.16-1}) that there exists a constant $\rho_0 \in (0,1)$ such that for any $n\ge N_0$, $(t,x)\in  [0,{T_1}] \times \Rd$, and $r \in (-\rho_0,\rho_0)$, $|(\psi_{n,i}^j(t,x,r))_{1\le i,j\le d}|\ge \frac{1}{2}$.
Since $(\psi_{n,i}^j(t,x,r))_{1\le i,j\le d}$ is the Jacobian matrix of $y_n(t,\c,r)$, by a change of variable, we get (\ref{3.16}).
\end{proof}

\vskip 0.5cm

From now on, we fix $t_0\in [0,{T_1}]$ and $z_0 \in u(t_0,\partial D)$.
Set
$$\rho_1:=\frac{\d_3}{4} \wedge  \rho_0 , $$
$$H_{t_0,z}:=\{x\in \Rd:(x-z)\cdot\g(t_0,z)=0\},$$
for some $z \in \Rd$. Recall $N_1$ is the constant defined in Lemma \ref{L3.2}. The next lemma states that in a neighborhood of $z_0$,  $y(t,x,r)$ hits the hyperplane $H_{t_0,z}$ at a unique point $r=\G^z(t,x)$, and so does $y_n(t,x,r)$ for $n \ge N_1$.
\begin{lem} \label{L3.3}
There exist constants $\eta_1\in(0,\eta_0 )$ and $\d_4\in(0,\frac{\d_3}{2})$ such that for any $n\ge N_1$, $(t,x)\in ((t_0-\eta_1)\vee 0,(t_0+\eta_1)\wedge T_1)\times B(z_0,\d_4)$ and $z\in B(z_0,\d_4)$, there exist unique $\Gamma^z(t,x),\G_n^z(t,x) \in (-\rho_1,\rho_1)$ such that $y(t,x,\G^z(t,x)),y_n(t,x,\G_n^z(t,x))\in H_{t_0,z}$. Moreover
\begin{eqnarray}
&&\G_n^z(\c , \c) \in C_b^{1,1}(((t_0-\eta_1)\vee 0,(t_0+\eta_1)\wedge T_1)\times B(z_0,\d_4)),   \label{4.10}\\
&&\Gamma^z(\c , \c)  \in C_b^{0,1}(((t_0-\eta_1)\vee 0,(t_0+\eta_1)\wedge T_1)\times B(z_0,\d_4)),  \label{4.11}
\end{eqnarray}
and
\begin{eqnarray}
&&\p_{x_i}\G_n^z(t,x)= \! - \! \left(   \g_n (t,y_n(t,x,\G_n^z(t,x)))  \! \c  \! \g (t_0,z)  \right)^{-1} \! \!  \sum_{1  \! \le \!  j \! \le \!  d} \! \psi_{n,   i}^j(t,x,  \! \G_n^z(t,x))\g^j(t_0,z), \label{3.49}\\
&&\p_t\G_n^z(t,x)=- \left( \g_n(t,y_n(t,x,\G_n^z(t,x)))\c \g(t_0,z) \right)^{-1} \Lambda_n(t,x,\G^z_n(t,x)) \c \g(t_0,z), \label{3.14} \\
&&\p_{x_i}\G^z(t,x)=-\left(  \g (t,y(t,x,\G^z(t,x))) \c \g(t_0,z)\right)^{-1} \!  \! \sum_{1 \le j\le d}\psi_i^j(t,x, \G^z(t,x))\g^j(t_0,z), \label{3.29}\\
&& \g_n(t,y_n(t,x,\G_n^z(t,x)))\c \g(t_0,z) \ge \cos \t_1,  \label{3.13} \\
&&\lim_{n \to \infty}\sup_{(t,x)\in ((t_0-\eta_2)\vee 0,(t_0+\eta_2)\wedge T_1)\times B(z_0,\d_4)}|\G^z_n(t,x)-\G^z(t,x)|=0. \label{3.17}
\end{eqnarray}

\end{lem}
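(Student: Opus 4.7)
The plan is to realize the hitting condition as an implicit equation and invoke the implicit function theorem. Define
\[
F_n(t,x,r,z) := (y_n(t,x,r) - z) \cdot \gamma(t_0,z), \qquad F(t,x,r,z) := (y(t,x,r) - z) \cdot \gamma(t_0,z),
\]
so that $y_n(t,x,r) \in H_{t_0,z}$ iff $F_n(t,x,r,z) = 0$, and similarly for $F$. A direct calculation gives $\partial_r F_n(t,x,r,z) = \gamma_n(t, y_n(t,x,r)) \cdot \gamma(t_0,z)$ and $\partial_r F(t,x,r,z) = \gamma(t, y(t,x,r)) \cdot \gamma(t_0,z)$. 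At the base point $(t_0,z_0,0,z_0)$ both derivatives are bounded below by $\cos \theta_1$ by Lemma \ref{L3.2}, so the task reduces to propagating this positivity to a full neighborhood and then applying standard implicit function machinery.

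The next step is a careful localization. Since $|n(\cdot)| \le 1$ and $0 \le \phi \le 1$, both $\gamma$ and $\gamma_n$ have norm at most $1$, hence the flows obey $|y_n(t,x,r)-x| \le |r|$ and $|y(t,x,r)-x| \le |r|$. With $\rho_1 = (\delta_3/4) \wedge \rho_0$ already fixed, I pick $\delta_4 \in (0,\delta_3/4)$ and $\eta_1 \in (0,\eta_0)$; then for $(t,x)$ in the asserted neighborhood, $z \in B(z_0,\delta_4)$, $r \in [-\rho_1,\rho_1]$, and $n \ge N_1$, the points $y_n(t,x,r)$ and $y(t,x,r)$ lie in $B(z_0,\delta_3)$ and the time stays within $\eta_0$ of $t_0$. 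Lemma \ref{L3.2} then delivers $|\gamma_n(t, y_n(t,x,r))| = |\gamma(t_0,z)| = 1$ together with
\[
\gamma_n(t, y_n(t,x,r)) \cdot \gamma(t_0,z) \ge \cos \theta_1, \qquad \gamma(t, y(t,x,r)) \cdot \gamma(t_0,z) \ge \cos \theta_1,
\]
which is exactly (\ref{3.13}) when evaluated at $r = \Gamma_n^z(t,x)$, and which also delivers strict monotonicity of $F_n(t,x,\cdot,z)$ and $F(t,x,\cdot,z)$ on $(-\rho_1,\rho_1)$ with slope at least $\cos\theta_1$. Shrinking $\delta_4$ further so that $2\delta_4 < \rho_1 \cos\theta_1$ forces $|F_n(t,x,0,z)| \le |x-z| < \rho_1 \cos\theta_1$, so $F_n(t,x,\pm\rho_1,z)$ have opposite signs and the intermediate value theorem produces a unique $\Gamma_n^z(t,x) \in (-\rho_1,\rho_1)$ with $F_n = 0$; the identical argument gives $\Gamma^z(t,x)$.

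Regularity and the explicit formulas follow from the implicit function theorem. Because $\gamma_n \in C^{1,1}_b$ by Proposition \ref{P3.3}, one has $y_n \in C^{1,1,1}$ and therefore $F_n \in C^{1,1}$ jointly in $(t,x,r)$; combined with the non-vanishing of $\partial_r F_n$ this yields (\ref{4.10}). Differentiating the identity $F_n(t,x,\Gamma_n^z(t,x),z) \equiv 0$ in $x_i$ and in $t$, and substituting $\partial_{x_i}y_n^j = \psi_{n,i}^j$ and $\partial_t y_n = \Lambda_n$, immediately produces formulas (\ref{3.49}) and (\ref{3.14}). The parallel reasoning with $\gamma \in C^{0,1}_b$ yields (\ref{4.11}) and (\ref{3.29}). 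For the uniform convergence (\ref{3.17}), I subtract $F_n(t,x,\Gamma_n^z,z) = 0 = F(t,x,\Gamma^z,z)$ and decompose
\[
(F - F_n)(t,x,\Gamma^z,z) = F_n(t,x,\Gamma_n^z,z) - F_n(t,x,\Gamma^z,z) = \partial_r F_n(t,x,\xi,z)\,(\Gamma_n^z - \Gamma^z)
\]
for some intermediate $\xi$; the left side is bounded in absolute value by $\sup_{|r|\le\rho_1} |y_n(t,x,r) - y(t,x,r)|$, which tends to $0$ uniformly by (\ref{2.29}), while $|\partial_r F_n| \ge \cos\theta_1$, so $|\Gamma_n^z - \Gamma^z| \le \sec\theta_1 \cdot \sup|y_n - y| \to 0$.

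The main obstacle is not conceptual but combinatorial: every invocation of Lemma \ref{L3.2} above is only legitimate so long as the argument of $\gamma_n$ (or $\gamma$) genuinely lies in $B(z_0,\delta_3)$ and the time is within $\eta_0$ of $t_0$ throughout $|r| \le \rho_1$. The compatibility $\rho_1 + \delta_4 \le \delta_3$ together with $\eta_1 \le \eta_0$ handles the geometric requirement, while the sharper $2\delta_4 < \rho_1 \cos\theta_1$ secures the sign change of $F_n$ at $r = \pm\rho_1$ on which the intermediate value step rests. With these constants fixed at the outset, the remainder is a routine implicit function theorem argument plus uniform convergence of the flows.
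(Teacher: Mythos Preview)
Your proof is correct and follows essentially the same approach as the paper: define the implicit equation $(y_n(t,x,r)-z)\cdot\gamma(t_0,z)=0$, use Lemma~\ref{L3.2} to obtain $\partial_r \ge \cos\theta_1$ on the relevant region, apply the implicit function theorem for existence, uniqueness, regularity and the derivative formulas, and deduce the uniform convergence from $|y_n-y|\to 0$ together with the lower bound on $\partial_r$. The only cosmetic differences are that you invoke the intermediate value theorem explicitly (via the sign change at $\pm\rho_1$) rather than leaning entirely on the implicit function theorem, and in the convergence step you apply the mean value theorem to $F_n$ whereas the paper applies it to $F$; both variants are equally valid.
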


\begin{proof}
For $n\ge N_1$, set $H_{n,t}(x,z,r):=(y_n(t,x,r)-z)\c \g(t_0,z)$. Then for any $x,z\in B(z_0,\frac{\d_3}{2})$,
$t\in ((t_0-\eta_0)\vee 0,(t_0+\eta_0)\wedge T_1)$ and $r \in (-\frac{\d_3}{2},\frac{\d_3}{2})$, we have $|y_n(t,x,r)-z_0|<\d_3$. Therefore by Lemma \ref{L3.2}, we have
\begin{eqnarray}
\p_r H_{n,t}(x,z,r)=\gamma_n(t, y_n(t, x, r)) \cdot \gamma (t_{0}, z )\ge \cos \t_1 >0. \label{2.93}
\end{eqnarray}
Note that  $H_{n,t}(z_0,z_0, 0)=0$,  hence applying the implicit function theorem to $H_{n,t}(\c, \c ,\c)$, there exist constants $\eta_1\in(0,\eta_0 )$ and $\d_4 \in(0,\frac{\d_3}{2})$ sufficiently small, such that for any $n\ge N_1$ and $t  \in \!  ((   t_0 \! - \! \eta_1 )    \vee    0,   (   t_0 \! + \! \eta_1  )   \wedge    T_1)$, there exists a unique $g_{n   ,  t}(\c,\c) \! \in  \! C^{1   ,  1}   (B(z_0, \d_4 )\times B(z_0,  \d_4 ))$, such that
$H_{n,  t}(x,z,   g_{n  ,  t}(x,   z)) \! = \! 0$
and $g_{n,t}(x,z) \in (-\rho_1, \rho_1)$ for $(x,z)\in B(z_0, \d_4 )\times B(z_0,  \d_4 )$. Denote $\G_n^z(t,x):=g_{n,t}(x,z)$. Then it is easy to see that (\ref{4.10}), (\ref{3.49}) and (\ref{3.14}) hold for any $n \ge N_1$ and $z \in B(z_0, \d_4 )$.
Since $|\G_n^z(t,x)|< \rho_1< \frac{\d_3}{2}$, by (\ref{2.93}) we get (\ref{3.13}).

\vskip 0.3cm
 Applying the implicit function theorem to $H_t(x,z,r):=(y(t,x,r)-z)\c \g(t_0,z)$, we can choose common constants $\eta_1,\d_4>0$, such that for $(t,x)\in ((t_0-\eta_1)\vee 0,(t_0+\eta_1)\wedge T_1)\times B(z_0,\d_4)$ and $z\in B(z_0,\d_4)$, there exists a unique $ \G^z(t,x)\in (-\rho_1,\rho_1)$ such that $ y(t,x,\G^z(t,x))\in H_{t_0,z}$. Moreover (\ref{4.11})
 and (\ref{3.29}) hold.

\vskip 0.3cm

Next we prove (\ref{3.17}).
By (\ref{2.29}) we have
\begin{eqnarray}  \label{2.18}
\begin{split}
&\ \ \ \    \lim_{n \to \infty}\sup_{(t,x)\in ((t_0-\eta_1)\vee 0,(t_0+\eta_1)\wedge T_1)\times B(z_0,\d_4)}|H_{t}(x,z,\G^z_n(t,x)) -H_{t}(x,z,\G^z(t,x))| \\
&=\lim_{n \to \infty}\sup_{(t,x)\in ((t_0-\eta_1)\vee 0,(t_0+\eta_1)\wedge T_1)\times B(z_0,\d_4)}|H_{t}(x,z,\G^z_n(t,x)) -H_{n,t}(x,z,\G_n^z(t,x))|=0.
\end{split}
\end{eqnarray}

On the other hand, for $(t,x)\in ((t_0-\eta_1)\vee 0,(t_0+\eta_1)\wedge T_1)\times B(z_0,\d_4)$ and $r \in (-\rho_1,\rho_1)$, since $|y(t,x,r)-z_0| \le |r|+|x-z_0|<\d_3$, together with Lemma \ref{L3.2} we have
\begin{eqnarray} \nonumber
\begin{split}
&\ \ \ \    \sup_{(t,x)\in ((t_0-\eta_1)\vee 0,(t_0+\eta_1)\wedge T_1)\times B(z_0,\d_4) \atop r \in (-\rho_1,\rho_1), n\ge N_1}   |\partial_r H_{t}(x,z,r)|\\
&=\sup_{(t,x)\in ((t_0-\eta_1)\vee 0,(t_0+\eta_1)\wedge T_1)\times B(z_0,\d_4) \atop r \in (-\rho_1,\rho_1), n\ge N_1}   |\g(t,y(t,x,r))\c \g(t_0,z)| \ge \cos \t_1 .
\end{split}
\end{eqnarray}
Hence, taking into account  (\ref{2.18}),
$$|\G^z_n(t,x)-\G^z(t,x)|\leq \frac{1}{\cos \theta_1}|H_{t}(x,z,\G^z_n(t,x)) -H_{t}(x,z,\G^z(t,x)) |,$$
which yields (\ref{3.17}).
%
%
\end{proof}

\vskip 0.4cm

Define
\begin{eqnarray} \label{2.19}
\begin{split}
C(z,\d):=\bigcup_{c\in \R}B(z-c\g(t_0,z),2\d \tan \t_1 )\bigcap B(z,\d).
\end{split}
\end{eqnarray}
We have the following relationship.

\begin{lem} \label{L2.7}
For any $\d\in(0,\frac{\d_4}{2}]$, $t\in((t_0-\eta_1)\vee 0,(t_0+\eta_1)\wedge T_1)$ and $z:=y(t_0,z_0,\frac{\d}{2})$, we have
\begin{eqnarray}\label{2-201}
\begin{split}
|y(t,x,\G^z(t,x))-z| < 3\d \tan \t_1 , \ \forall x\in C(z,\d),
\end{split}
\end{eqnarray}
and
\begin{eqnarray}
\{x\in B(z,\d): y(t,x,\G^z(t,x))\in B(z,\d \tan \t_1 ) \}\subset C(z,\d). \label{3.83}
\end{eqnarray}
\end{lem}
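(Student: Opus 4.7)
The geometric picture is that, by Lemma~\ref{L3.2}, $\gamma(t, y(t, x, s))$ makes an angle at most $\theta_1$ with the fixed direction $\gamma(t_0, z)$, so the flow $y(t, x, \cdot)$ runs nearly parallel to $\gamma(t_0, z)$. The tube $C(z, \delta)$ is calibrated precisely so that such near-straight trajectories starting in it strike the hyperplane $H_{t_0, z}$ inside a small ball around $z$, and conversely.

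My plan is to first verify that the trajectory $\{y(t, x, s) : |s| \leq |\Gamma^z(t, x)|\}$ stays inside $B(z_0, \delta_3)$, so that Lemma~\ref{L3.2} is applicable throughout. Using $z = y(t_0, z_0, \delta/2)$ one has $|z - z_0| \leq \delta/2$, and since $|y(t, x, s) - x| \leq |s| \leq \rho_1$ the constraints $\delta \leq \delta_4/2 < \delta_3/4$ and $\rho_1 \leq \delta_3/4$ yield $|y(t, x, s) - z_0| < \delta_3$. Lemma~\ref{L3.2} then gives $\gamma(t, y(t, x, s)) \cdot \gamma(t_0, z) \geq \cos \theta_1$, so the component of $\gamma(t, y(t, x, s))$ perpendicular to $\gamma(t_0, z)$ has norm at most $\sin \theta_1$. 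Writing the core identity
\[
y(t, x, r) - z = (x - z) + \int_0^r \gamma(t, y(t, x, s))\, ds
\]
and taking inner product with $\gamma(t_0, z)$ at $r = \Gamma^z(t, x)$ kills the left-hand side by definition of $H_{t_0, z}$; combined with the lower bound $\gamma \cdot \gamma(t_0, z) \geq \cos \theta_1$ this forces $|\Gamma^z(t, x)| \leq |(x - z) \cdot \gamma(t_0, z)| / \cos \theta_1 \leq \delta / \cos \theta_1$.

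For (\ref{2-201}), given $x \in C(z, \delta)$, pick $c \in \mathbb{R}$ with $|x - (z - c \gamma(t_0, z))| < 2 \delta \tan \theta_1$; then the component of $x - z$ perpendicular to $\gamma(t_0, z)$ inherits the same strict bound, while the integral's perpendicular part has norm at most $|\Gamma^z(t, x)| \sin \theta_1 \leq \delta \tan \theta_1$. Since $y(t, x, \Gamma^z(t, x)) - z$ has zero parallel component, its norm equals that of its perpendicular component, and is therefore strictly less than $3 \delta \tan \theta_1$. For (\ref{3.83}), I would run the same decomposition in reverse: from $|y(t, x, \Gamma^z(t, x)) - z| < \delta \tan \theta_1$, the perpendicular component of the left-hand side of the core identity is strictly smaller than $\delta \tan \theta_1$, so the perpendicular component of $x - z$ has norm strictly less than $\delta \tan \theta_1 + \delta \tan \theta_1 = 2 \delta \tan \theta_1$. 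Choosing $c := -(x - z) \cdot \gamma(t_0, z)$ then places $x$ in $B(z - c \gamma(t_0, z), 2 \delta \tan \theta_1) \cap B(z, \delta) \subset C(z, \delta)$.

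The main technical care goes to the first step, verifying that the flow remains in the neighborhood of $z_0$ where the angle estimate of Lemma~\ref{L3.2} is available along the entire interval between $0$ and $\Gamma^z(t, x)$; once that is secured, the argument reduces to elementary vector decomposition along the fixed direction $\gamma(t_0, z)$ together with the identity $y(t, x, \Gamma^z(t, x)) - z \perp \gamma(t_0, z)$.
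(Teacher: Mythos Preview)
Your proof is correct and follows essentially the same approach as the paper's: decompose along and perpendicular to the fixed direction $\gamma(t_0,z)$, use Lemma~\ref{L3.2} to control the perpendicular drift of the flow by $\delta\tan\theta_1$, and combine with the definition of $C(z,\delta)$. The only cosmetic differences are that the paper bounds the perpendicular displacement via the pointwise inequality $|Q(\gamma)|\le\tan\theta_1\,|P(\gamma)|$ rather than first bounding $|\Gamma^z|$, and proves (\ref{3.83}) by contrapositive rather than directly; these lead to the same estimates.
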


\begin{proof}
Fix $\d\in(0,\frac{\d_4}{2}]$, $t\in((t_0-\eta_1)\vee 0,(t_0+\eta_1)\wedge T_1)$, $z:=y(t_0,z_0,\frac{\d}{2})$ and $x\in B(z,\d)$. Define $P(\a):=(\a \c \g(t_0,z))\g(t_0,z)$ and $Q(\a):=\a-P(\a)$ for $\a \in \Rd$. It is easy to see that for any $r \in (0,\G^z(t,x)]$, we have
\begin{eqnarray}  \nonumber
\begin{split}
 |y(t,x,r)-z_0| & \le|y(t,x,r)-x|+|x-z|+|z-z_0| \\
 &\le |r|+\d+\frac{\d}{2}<\d_3.
\end{split}
\end{eqnarray}
Therefore by Lemma \ref{L3.2} and the fact that
\begin{eqnarray}
P(y(t,x,\G^z(t,x))-z)=0, \label{4.6}
\end{eqnarray}
we have
\begin{eqnarray} \label{2-4}  \nonumber
\begin{split}
|Q(\g(t,y(t,x,r)))|^2&=|\g(t,y(t,x,r))-(\g(t,y(t,x,r))\c \g(t_0,z))\g(t_0,z)|^2\\
&= |\g(t,y(t,x,r))|^2- (\g(t,y(t,x,r))\c \g(t_0,z))^2\\
&\le 1- \cos^2 \t_1 \\
&= \cos^2 \t_1  \tan^2 \t_1 \\
&\le (\g(t,y(t,x,r))\c \g(t_0,z))^2 \tan^2 \t_1 \\
&= |P(\g(t,y(t,x,r)))|^2 \tan^2 \t_1 ,\\
\end{split}
\end{eqnarray}
and
\begin{eqnarray} \label{2-5}  \nonumber
\begin{split}
|P(y(t,x,\G^z(t,x))-x)|&= |P(z-x)+P(y(t,x,\G^z(t,x))-z)|\\
&= |P(z-x)| \\
&\le  |z-x| < \d.
\end{split}
\end{eqnarray}
Hence it follows that
\begin{eqnarray} \label{2.26}
\begin{split}
|Q(y(t,x,\G^z(t,x))-x)|&\le |\int_{0}^{\G^z(t,x)}|Q(\g(t,y(t,x,r)))|dr|\\
&\le |\int_{0}^{\G^z(t,x)} |P(\g(t,y(t,x,r)))|\tan \t_1  dr| \\
&=  |\int_{0}^{\G^z(t,x)} \g(t,y(t,x,r))  \c \g(t_0,z) dr| \tan \t_1 \\
&=  |( \int_{0}^{\G^z(t,x)} \g(t,y(t,x,r))  dr \c \g(t_0,z))| \tan \t_1 \\
&=  |P(y(t,x,\G^z(t,x))-x)| \tan \t_1 \\
&< \d \tan \t_1 .
\end{split}
\end{eqnarray}

 For $x \in C(z,\d)$, we easily  see that $|Q(z-x)|< 2\d \tan \t_1 $. Hence by (\ref{4.6}) and (\ref{2.26}) we get that
\begin{eqnarray}  \nonumber
\begin{split}
|y(t,x,\G^z(t,x))-z|&= |Q(y(t,x,\G^z(t,x))-z)|\\
&\le |Q(y(t,x,\G^z(t,x))-x)|+|Q(z-x)|\\
&< \d \tan \t_1+2\d \tan \t_1  =3\d \tan \t_1 .
\end{split}
\end{eqnarray}
which is (\ref{2-201}).
\vskip 0.3cm
Next we prove (\ref{3.83}).
If $x\in B(z,\d) \setminus C(z,\d)$, then we have
$$|Q(z-x)|\ge 2\d \tan \t_1  .$$
Hence by  (\ref{4.6}) and (\ref{2.26}) we get that
\begin{eqnarray}  \nonumber
\begin{split}
|y(t,x,\G^z(t,x))-z|&= |Q(y(t,x,\G^z(t,x))-z)|\\
&\ge |Q(z-x)|-|Q(y(t,x,\G^z(t,x))-x)|\\
&> 2\d \tan \t_1  -\d \tan \t_1 =\d \tan \t_1 ,
\end{split}
\end{eqnarray}
which implies (\ref{3.83}).
\end{proof}

\vskip 0.4cm

The following Lemma plays an important role in the proofs of Proposition \ref{P4.1.1} and Proposition \ref{P4.2.1}, which establish the convergence of $\G^z_n(t,x)$ and $y_n(t,x,\G^z_n(t,x))$ in some Sobolev spaces and provide further regularities of $y(t,x,\G^z(t,x))$. Recall that
$$D(t,c)=\{x:d(x,u(t,D)^c)> c \},$$
$\t_0$ and $N_0(\e)$ were defined in Proposition \ref{P2.2} and Lemma \ref{L3.2.2} respectively.

\begin{lem}  \label{L3.4}
There exist constants $\d_5\in(0, \d_4 )$ and $\eta_2\in (0,\eta_1)$ such that for any $\e>0$, there exists an integer $N_1(\e)>N_1 \vee N_0(\e)$ satisfying that for $z:=y(t_0,z_0,\frac{\d_5}{2})$, $n,m\ge N_1(\e)$, $t\in((t_0-\eta_2)\vee 0,(t_0+\eta_2)\wedge T_1)$ and $x\in C(z,\d_5)\bigcap D(t,\e)$, if $\G^z(t,x)\neq 0$, then we have
\begin{eqnarray}
y(t,x,r)\in D(t,(\e \sin \frac{\t_0}{2}) \wedge  (\frac{\d_5}{16}\sin \frac{\t_0}{2})) \ \mbox{for} \ r\in (0, \G^z(t,x)], \label{3.46}
\end{eqnarray}
and
\begin{eqnarray}
y_n(t,x,r)\in D(t,(\frac{\e}{2} \sin \frac{\t_0}{2}) \wedge  (\frac{\d_5}{32}\sin \frac{\t_0}{2})),  \label{3.48}
\end{eqnarray}
for $r\in (0,  \G_n^z(t,x) ] \cup (\G^z_n(t,x),\G^z_m(t,x)]\cup (\G^z_m(t,x),\G^z(t,x)]$.
\end{lem}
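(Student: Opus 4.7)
The strategy is to establish (\ref{3.46}) first via a geometric argument using the cone conditions of Proposition \ref{P2.2} together with the flow estimates of Lemma \ref{L2.7}, and then to transfer the bound to $y_n$ and $\G^z_n$ using the uniform convergences $y_n \to y$ in (\ref{2.29}) and $\G^z_n \to \G^z$ in (\ref{3.17}).

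I would fix $\d_5 \in (0, \d_4)$ and $\eta_2 \in (0, \eta_1)$ small enough that: (i) for $(t, x)$ in the prescribed region, the flow $y(t, x, r)$ remains in $B(z_0, \d_3)$ for $r \in [-\rho_1, \rho_1]$, so that the angular estimates of Lemma \ref{L3.2} are available; (ii) by the H\"older estimate (\ref{3.1}) and the fact that $z_0 \in u(t_0, \p D)$, the point $z = y(t_0, z_0, \d_5/2)$ stays at distance comparable to $\d_5/2$ from $\p u(t, D)$ for all $|t - t_0| < \eta_2$; and (iii) Lemma \ref{L2.7} yields $|y(t, x, \G^z(t, x)) - z| < 3 \d_5 \tan \t_1$, which is much less than $\d_5/2$ since $\t_1$ is small. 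The key geometric fact I would use is: by the interior cone condition (\ref{3.60}), if $y \in u(t, D)$ satisfies $(y - w) \cdot \g(t, w) \ge \cos(\t_0/2)|y - w|$ for some $w \in \p u(t, D)$ with $|y - w| \le \d_2$, then the ball of radius $|y - w| \sin(\t_0/2)$ around $y$ lies in the cone $C(w, \g(t, w), \t_0, \d_2) \subset u(t, D)$, and hence $d(y, \p u(t, D)) \ge |y - w| \sin(\t_0/2)$.

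To prove (\ref{3.46}), I would split into two regimes for $r \in (0, \G^z(t, x)]$. When $|r|$ is short (of order $\e$), the bound $|y(t, x, r) - x| \le |r|$ combined with $x \in D(t, \e)$ gives $d(y(t, x, r), \p u(t, D)) \ge \e - |r|$, which after balancing yields at least $\e \sin(\t_0/2)$. When $|r|$ is longer, the slab structure of $C(z, \d_5)$ together with the radial estimate from Lemma \ref{L2.7} forces $|y(t, x, r) - w|$ (for the nearest boundary point $w$) to be bounded below by a constant multiple of $\d_5$, and the fact that the flow direction lies within angle $\t_1 < \t_0/2$ of $\g(t_0, z)$ by Lemma \ref{L3.2}, together with the cone conditions at $w$, supplies the alignment required; the geometric fact above then yields $d(y(t, x, r), \p u(t, D)) \ge (\d_5/16) \sin(\t_0/2)$. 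For (\ref{3.48}), I would choose $N_1(\e)$ large enough that $\sup|y_n - y|$ and $\sup|\G^{z'}_n - \G^{z'}|$ are small compared to the lower bounds above; then for $r$ in any of the three subintervals, $y_n(t, x, r)$ lies within a small perturbation of $y(t, x, r')$ for a nearby $r' \in (0, \G^z(t, x)]$, and the factors of $1/2$ and $1/32$ in the claimed constants absorb this convergence error.

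The main obstacle is the long-flow case in the proof of (\ref{3.46}): for each intermediate $r$ one must identify a boundary point $w$ near $y(t, x, r)$ and verify quantitatively that $y(t, x, r)$ lies inside the interior cone at $w$ of half-angle $\t_0/2$ with $|y(t, x, r) - w|$ controlled from below by a multiple of $\d_5$. The elaborate trigonometric conditions (\ref{2.5})--(\ref{2-3}) imposed on $\t_1$ in Lemma \ref{L3.2} are precisely what make this alignment uniform across the flow segment, and tracing them through to produce the exact $\sin(\t_0/2)$ factor (rather than a weaker $\sin \t_0$) is the most delicate computation.
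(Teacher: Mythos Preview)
Your overall strategy---establish (\ref{3.46}) geometrically via the cone conditions, then transfer to (\ref{3.48}) by the uniform convergences (\ref{2.29}) and (\ref{3.17})---matches the paper, and your handling of (\ref{3.48}) is essentially correct. The gap is in your decomposition for (\ref{3.46}).

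You split by the size of $|r|$: a triangle-inequality bound $d(y(t,x,r),u(t,D)^c) \ge \e - |r|$ for short $r$, and a claimed $\d_5$-scale lower bound for long $r$. This fails for intermediate $r$. Take $\e$ very small and $\G^z(t,x) > 0$, and pick $r$ with $\e(1-\sin\tfrac{\t_0}{2}) < r \ll \d_5$. Your short-$r$ bound is useless (negative), while your long-$r$ claim that $|y(t,x,r) - w| \ge c\,\d_5$ for the nearest boundary point $w$ is false: $y(t,x,r)$ is at distance roughly $\e + r \ll \d_5$ from $\partial u(t,D)$. So neither case delivers the required bound $\e \sin\tfrac{\t_0}{2}$ here, and your sketch gives no mechanism for it.

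The paper's decomposition is instead by the sign of $\G^z(t,x)$, and the two cases produce the two sides of the minimum. When $\G^z(t,x) > 0$, the key construction is a boundary point $F(a_2) \in u(t,\partial D)$ found by tracing from $x$ backward along the line $a \mapsto x + a\,\g(t,z_0')$ (where $z_0' = u(t,u^{-1}(t_0,z_0))$) until it exits the cone $C(z_0',-\g(t,z_0'),\t_0,\cdot)$ and then re-enters $u(t,\partial D)$; one then verifies $y(t,x,r) \in C(F(a_2),\g(t,F(a_2)),\tfrac{\t_0}{2},\d_2)$ and $|y(t,x,r)-F(a_2)| \ge |x-F(a_2)| \ge d(x,u(t,D)^c) > \e$ for all $r \in (0,\G^z]$, yielding $d(y(t,x,r),u(t,D)^c) > \e\sin\tfrac{\t_0}{2}$ uniformly in $r$. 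When $\G^z(t,x) \le 0$, the anchor is $z_0$ itself: the trigonometric constraints (\ref{2.5})--(\ref{2.6}) on $\t_1$ are used to place $y(t,x,r)$ in $C(z_0,\g(t_0,z_0),\tfrac{\t_0}{2},\d_2)$ with $|y(t,x,r)-z_0| > \d_5/8$, giving $d(y(t,x,r),u(t_0,D)^c) > \tfrac{\d_5}{8}\sin\tfrac{\t_0}{2}$; the passage from time $t_0$ to time $t$ (and the loss to $\d_5/16$) then comes from an openness argument for $\tilde D_c$ (Lemma~\ref{L3.2.2}), not from the H\"older estimate alone. Your ``nearest boundary point $w$'' does not capture either construction, and the explicit building of $F(a_2)$ is the step you are missing.
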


\begin{proof}
Fix $\d_5:= \frac{\d_4 \sin \t_0}{16} $ and $z:=y(t_0,z_0,\frac{\d_5}{2})$.

First we show that for
%
%
$t\in((t_0-\eta_1)\vee 0,(t_0+\eta_1)\wedge T_1)$ and $x\in C(z,\d_5)$, if $\G^z(t,x)\le 0$ and $r\in [\G^z(t,x),0]$, then $d(y(t,x,r), u(t_0,D)^c)> \frac{\d_5}{8}\sin \frac{\t_0}{2}$.
\vskip 0.3cm

By (\ref{2-201}), we get
\begin{eqnarray} \label{2.25}
\begin{split}
|y(t,x,\G^z(t,x))-z_0|\le |y(t,x,\G^z(t,x))-z|+|z-z_0|\le 3\d_5 \tan \t_1  + |z-z_0|.
\end{split}
\end{eqnarray}
On the other hand, by Lemma \ref{L3.2} and the fact that  $\cos \t_1  > \frac{1}{4}$ we have
\begin{eqnarray} \label{2.27}
\begin{split}
|z-z_0|^2&=\int_0^{\frac{\d_5}{2}}\g(t_0,y(t_0,z_0,r)) \c (z-z_0)dr\\
&=\int_0^{\frac{\d_5}{2}}dr \int_0^{\frac{\d_5}{2}} \g(t_0,y(t_0,z_0,r)) \c \g(t_0,y(t_0,z_0,\tau))d\tau\\
&>  \int_0^{\frac{\d_5}{2}}dr \int_0^{\frac{\d_5}{2}} \frac{1}{4} d\tau =  \frac{\d_5^2}{16},
\end{split}
\end{eqnarray}
i.e.,  $\frac{\d_5}{|z-z_0|}< 4$. By Lemma \ref{L3.2} and (\ref{2.25}) we have
\begin{eqnarray} \label{2-16-1}
\begin{split}
&\ \ \ \ (y(t,x,\G^z(t,x))-z_0)\c \g(t_0,z_0)\\
&=(z-z_0)\c \g(t_0,z_0)+(y(t,x,\G^z(t,x))-z)\c \g(t_0,z_0)\\
&=\int_0^{\frac{\d_5}{2}}\g(t_0,y(t_0,z_0,\tau)) \c \g(t_0,z_0)d\tau+(y(t,x,\G^z(t,x))-z)\c (\g(t_0,z_0)-\g(t_0,z))\\
&\ge  \frac{\d_5}{2} \cos \t_1  -|y(t,x,\G^z(t,x))-z| |\g(t_0,z_0)-\g(t_0,z)|\\
&\ge |z-z_0| \cos \t_1  -|y(t,x,\G^z(t,x))-z| (2-2\cos \t_1 )^{\frac{1}{2}} \\
&\ge |z-z_0| (\cos \t_1  -(2-2\cos \t_1 )^{\frac{1}{2}} )-|y(t,x,\G^z(t,x))-z_0| (2-2\cos \t_1 )^{\frac{1}{2}} \\
&\ge (\frac{|z-z_0| (\cos \t_1  -(2-2\cos \t_1 )^{\frac{1}{2}} )}{|z-z_0|+3\d_5 \tan \t_1 }-(2-2\cos \t_1 )^{\frac{1}{2}} )|y(t,x,\G^z(t,x))-z_0| \\
&>  (\frac{\cos \t_1  -(2-2\cos \t_1 )^{\frac{1}{2}} }{1+12 \tan \t_1 }-(2-2\cos \t_1 )^{\frac{1}{2}} )|y(t,x,\G^z(t,x))-z_0|,
\end{split}
\end{eqnarray}
where the fact  $\frac{\d_5}{|z-z_0|}< 4$ has been used in the last inequality.
Note that for $\tau \in [\G^z(t,x),0]$,
\begin{eqnarray}
\begin{split}
|y(t,x,\tau)-z_0|\le |y(t,x,\tau)-x|+|x-z |+|z-z_0|< \d_3, \label{4.7}
\end{split}
\end{eqnarray}
 which implies that $\g(t,y(t,x,\tau))\c \g(t_0,z_0)\ge   \cos \t_1  $ by Lemma \ref{L3.2}. Together with (\ref{2.5}) and (\ref{2-16-1}) we get  $ \g(t,y(t,x,\tau)) \c(y(t,x,\G^z(t,x))-z_0) \ge 0$. Hence
\begin{eqnarray} \label{2.28}
\begin{split}
&\ \ \ \ |y(t,x,r)-z_0|^2\\
&=|y(t,x,r)-y(t,x,\G^z(t,x))|^2+|y(t,x,\G^z(t,x))-z_0|^2\\
&\ \ \ \ +2(y(t,x,r)-y(t,x,\G^z(t,x))) \c (y(t,x,\G^z(t,x))-z_0)\\
&\ge |y(t,x,\G^z(t,x))-z_0|^2+2\int_{\G^z(t,x)}^{r} \g(t,y(t,x,\tau)) \c (y(t,x,\G^z(t,x))-z_0)d\tau\\
&\ge  |y(t,x,\G^z(t,x))-z_0|^2. \\
\end{split}
\end{eqnarray}
Combining (\ref{2-201}), (\ref{2.27}), (\ref{2.28}) and the fact that $\t_1< \arctan \frac{1}{24}$, we deduce that
\begin{eqnarray}\label{2.8}
\begin{split}
|y(t,x,r)-z_0|& \ge |y(t,x,\G^z(t,x))-z_0| \\
& \ge |z-z_0|-|y(t,x,\G^z(t,x))-z| \\
& > \frac{\d_5}{4}-3\d_5 \tan \t_1 > \frac{\d_5}{8}.
\end{split}
\end{eqnarray}

By Lemma \ref{L3.2}, (\ref{2-16-1}) and (\ref{4.7}), we have
\begin{eqnarray} \label{2-16-2}
\begin{split}
&\ \ \ \ (y(t,x,r)-z_0)\c \g(t_0,z_0)\\
&=(y(t,x,r)-y(t,x,\G^z(t,x)))\c \g(t_0,z_0)+(y(t,x,\G^z(t,x))-z_0)\c \g(t_0,z_0)\\
&> \int_{\G^z(t,x)}^{r} \g(t,y(t,x,\tau))\c \g(t_0,z_0)d\tau  + (\frac{\cos \t_1  -(2-2\cos \t_1 )^{\frac{1}{2}} }{1+12 \tan \t_1 }-(2-2\cos \t_1 )^{\frac{1}{2}} ) \\
&\ \ \ \ \times |y(t,x,\G^z(t,x))-z_0|  \\
&\ge (r-\G^z(t,x))\cos \t_1     +    (\frac{\cos \t_1   -   (2  -  2\cos \t_1 )^{\frac{1}{2}} }{1+12 \tan \t_1 }   -   (2-2\cos \t_1 )^{\frac{1}{2}} )  \\
&\ \ \ \ \times |y(t,x,\G^z(t,x))-z_0|  \\
&\ge |y(t,x,r)-y(t,x,\G^z(t,x))|\cos \t_1  + (\frac{\cos \t_1  -(2-2\cos \t_1 )^{\frac{1}{2}} }{1+12 \tan \t_1 }-(2-2\cos \t_1 )^{\frac{1}{2}} )  \\
&\ \ \ \ \times |y(t,x,\G^z(t,x))-z_0|  \\
&> |y(t,x,r)-z_0| \cos \frac{\t_0}{2},
\end{split}
\end{eqnarray}
where $\t_1<\frac{\t_0}{2}$ and (\ref{2.6}) have been used for the last inequality. Now, (\ref{4.7}) and (\ref{2-16-2}) implies that
$$y(t,x,r) \in C(z_0,\g(t_0,z_0), \frac{\t_0}{2},\d_2)\subset u(t_0,D).$$
Combining this with  (\ref{2.8}) we obtain that
\begin{eqnarray} \label{4.8}
\begin{split}
d(y(t,x,r), u(t_0,D)^c)\ge d(y(t,x,r), \p C(z_0,\g(t_0,z_0),\t_0,\d_2))>  \frac{\d_5}{8} \sin \frac{\t_0}{2}.
\end{split}
\end{eqnarray}

\vskip 0.3cm

Note that $\tilde {D}_{\frac{\d_5}{16} \sin \frac{\t_0}{2}}=\{(t,y):t\in (0,T_1),\ y \in D(t, \frac{\d_5}{16} \sin \frac{\t_0}{2})\}$ is an open set in $\Rdd$ by Lemma \ref{L3.2.2}, and $\{ t_0 \} \times \bar A \subset \tilde {D}_{\frac{\d_5}{16} \sin \frac{\t_0}{2}}$ by (\ref{4.8}), where
$$A:=\{y(t,x,r): \ t\in((t_0-\eta_1)\vee 0,(t_0+\eta_1)\wedge T_1), \ x\in C(z,\d_5),\ \G^z(t,x)\le 0,   \ r\in [\G^z(t,x),0]\}. $$
Hence there exists a $\eta_2 \in (0,\eta_1 \wedge (\frac{\d_4\sin \t_0}{8M_0})^{1/a_0})$ such that
$$ ((t_0-\eta_2)\vee 0,(t_0+\eta_2)\wedge T_1) \times A \subset \tilde {D}_{\frac{\d_5}{16} \sin \frac{\t_0}{2}}.$$
Obviously, for $t\in((t_0-\eta_2)\vee 0,(t_0+\eta_2)\wedge T_1)$, $x\in C(z,\d_5)$, if $\G^z(t,x)\le 0$ and $r\in [\G^z(t,x),0]$, then
\begin{eqnarray}\label{2.12}
\begin{split}
d(y(t,x,r), u(t,D)^c)> \frac{\d_5}{16}\sin \frac{\t_0}{2}.
\end{split}
\end{eqnarray}

%

\vskip 0.3cm

Next we will prove (\ref{3.46}). By (\ref{2.12}) we just need to show that for $\e>0$, \\
 $t\in((t_0-\eta_2)\vee 0,(t_0+\eta_2)\wedge T_1)$ and $x\in C(z,\d_5)\bigcap D(t,\e)$, if $\G^z(t,x)>0$ and $r \in (0, \G^z(t,x)]$, then $d(y(t,x,r) , u(t,D)^c)> \e \sin \frac{\t_0}{2} $.
\vskip 0.3cm
Set $z_0':=u(t,u^{-1}(t_0,z_0))\in u(t, \p D)$ and $F(a):=x+a\g(t,z_0')$ for $a \in \R$.
Obviously, there exists a constant $a_1 <0$ such that $F(a_1) \in \p \cup_{r>0}C(z_0',-\g(t,z_0'),\t_0,r))$. Noting $\eta_2<(\frac{\d_4\sin \t_0}{8M_0})^{1/a_0}$ and $\d_5=\frac{\d_4 \sin \t_0}{16}$, by (\ref{3.1}) we have
\begin{eqnarray} \label{2.9}
\begin{split}
|z_0-z_0'|&= |u(t_0,u^{-1}(t_0,z_0))-u(t,u^{-1}(t_0,z_0))|\\
&\le M_0|t_0-t|^{\a_0}< \frac{\d_4 \sin \t_0}{8},
\end{split}
\end{eqnarray}
and
\begin{eqnarray}  \nonumber
\begin{split}
|x-z_0'|&\le  |x-z|+|z-z_0|+|z_0-z_0'|\\
&< 2\d_5 +\frac{\d_4 \sin \t_0}{8}=\frac{\d_4 \sin \t_0}{4}.
\end{split}
\end{eqnarray}
Therefore
\begin{eqnarray} \nonumber
\begin{split}
|F(a_1)-z_0'|=\frac{d(z_0',\{F(a):a\in \R\})}{\sin \t_0} \le \frac{|x-z_0'|}{\sin \t_0} <\frac{\d_4  }{4},
\end{split}
\end{eqnarray}
which implies that $F(a_1) \in \bar C(z_0',-\g(t,z_0'),\t_0,\d_2) \subset u(t,D)^c$.
Since
$$F(a_1)\in u(t,D) ^c \cap B(z_0',\frac{\d_4  }{4})$$
and
$$F(0)=x\in u(t,D) \cap B(z_0', \frac{\d_4  }{4}),$$
there eixsts a constant $a_2\in (a_1,0)$ such that $F(a_2)\in u(t,\p D) \cap B(z_0',\frac{\d_4  }{4})$.

For any $\tau \in (0, \G^z(t,x)]$, since $\tau <\rho_1 \le \frac{\d_3}{4}$ and $\d_4 < \frac{\d_3  }{2}$, we have
\begin{eqnarray} \label{2.10}
\begin{split}
|y(t,x,\tau)-F(a_2)|&\le |y(t,x,\tau)-x|+|x-F(a_2)| \\
&\le \tau+|x-F(a_2)|\\
&\le \tau+|x-F(a_1)| \\
&\le  \tau+|x-z_0'|+|F(a_1)-z_0'|\\
&< \tau+\frac{\d_4\sin \t_0}{4}+\frac{\d_4  }{4}< \frac{\d_3}{2},
\end{split}
\end{eqnarray}
which implies that
\begin{eqnarray}  \label{4.9}
\begin{split}
|y(t,x,\tau)-z_0| &\le|y(t,x,\tau)-F(a_2)|+|F(a_2)-z_0'|+|z_0'-z_0|  \\
&<\frac{\d_3}{2}+\frac{\d_4  }{4}+\frac{\d_4 \sin \t_0}{8}<\d_3.
\end{split}
\end{eqnarray}
Together with Lemma \ref{L3.2}, (\ref{2.9}) and the fact that $x \in D(t,\e)$, we deduce that
\begin{eqnarray} \label{2.11}
\begin{split}
&\ \ \ \ |y(t,x,r)-F(a_2)|^2\\
&=|y(t,x,r)-x|^2+|x-F(a_2)| ^2+2(-a_2)\int_0^r\g(t,y(t,x,\tau))\c \g(t,z_0') d\tau\\
&\ge  |x-F(a_2)| ^2\ge d(x, u(t,D)^c)^2> {\e}^2.
\end{split}
\end{eqnarray}
On the other hand, note that $|F(a_2)-z_0|\le |F(a_2)-z_0'|+|z_0'-z_0|<  \frac{\d_4  }{4}+ \frac{\d_4 \sin \t_0}{8}<\d_3$. Combining this with  Lemma \ref{L3.2}, (\ref{2.9}) and (\ref{4.9}), we have
\begin{eqnarray}  \nonumber
\begin{split}
&\ \ \ \ (y(t,x,r)-F(a_2))\c \g(t,F(a_2))\\
&= (y(t,x,r)-x)\c \g(t,F(a_2))+(x-F(a_2))\c \g(t,F(a_2))\\
&=\int_0^r \g(t,y(t,x,\tau)) \c \g(t,F(a_2))  d\tau+(-a_2) \g(t,z_0')  \c \g(t,F(a_2)) \\
&\ge r \cos \t_1  +(-a_2)\cos \t_1 \\
&\ge  \cos \t_1  |y(t,x,r)-x|+\cos \t_1  |x-F(a_2)| \\
&\ge \cos \t_1   |y(t,x,r)-F(a_2)|> \cos \frac{\t_0}{2} |y(t,x,r)-F(a_2)|.\\
\end{split}
\end{eqnarray}
Together with (\ref{2.10}), we have $y(t,x,r) \in  C(F(a_2),\g(t,F(a_2)), \frac{\t_0}{2},\d_2) \subset u(t,D)$. Hence combining this with (\ref{2.11}) we obtain that
\begin{eqnarray}  \nonumber
\begin{split}
d(y(t,x,r) , u(t, D)^c) &\ge d(y(t,x,r) , \partial C(F(a_2),\g(t,F(a_2)),\t_0,\d_2)) \\
&\ge|y(t,x,r) -F(a_2)| \sin \frac{\t_0}{2} > \e \sin \frac{\t_0}{2} .
\end{split}
\end{eqnarray}

\vskip 0.3cm

Finally we prove (\ref{3.48}). By (\ref{2.29}), there exists an integer $\tilde N(\e)>N_1 \vee N_0(\e)$ such that for $n>\tilde N(\e)$,
\begin{eqnarray} \nonumber
\begin{split}
|y_n(t,x,r)-y(t,x,r)| < (\frac{\e}{4} \sin \frac{\t_0}{2}) \wedge  (\frac{\d_5}{64}\sin \frac{\t_0}{2})  \ \mbox{for} \ r\in (0,  \G^z(t,x) ].
\end{split}
\end{eqnarray}
Using this and (\ref{3.46}), we see that
\begin{eqnarray}
\begin{split}
y_n(t,x,r) \in D(t,(\frac{3\e}{4} \sin \frac{\t_0}{2}) \wedge  (\frac{3\d_5}{64}\sin \frac{\t_0}{2}) )  \ \mbox{for} \ r\in (0,  \G^z(t,x) ]. \label{3.47} \\
\end{split}
\end{eqnarray}
By (\ref{3.17}), (\ref{3.47}) and noting
$$\sup_{(t,x)\in [0,T_1]\times \Rd,\atop \tau \in(-\rho_1,\rho_1),n\ge N_0}|\p_r y_n(t,x,\tau)|<\infty,$$
for any $\e>0$, there exists an integer $N_1(\e)>\tilde N(\e)$ such that for $n,m>N_1(\e)$ and \\
$r \in (\G^z (t,x),\G^z_n(t,x)]\bigcup (\G^z_n (t,x),\G_m^z(t,x)]$, we have
$$|y_n(t,x,r)-y_n(t,x,\G^z(t,x))|< (\frac{\e}{4} \sin \frac{\t_0}{2}) \wedge  (\frac{\d_5}{64}\sin \frac{\t_0}{2}),$$
and $d(y_n(t,x,\G^z(t,x)),u(t,D)^c)> (\frac{3\e}{4} \sin \frac{\t_0}{2}) \wedge  (\frac{3\d_5}{64}\sin \frac{\t_0}{2}) $. Hence
\begin{eqnarray}  \nonumber
\begin{split}
d(y_n(t,x,r),u(t,D)^c)& \ge  d(y_n(t,x,\G^z_n(t,x)),u(t,D)^c)-|y_n(t,x,r)-y_n(t,x,\G^z_n(t,x))|\\
&> (\frac{\e}{2} \sin \frac{\t_0}{2}) \wedge  (\frac{\d_5}{32}\sin \frac{\t_0}{2}).
\end{split}
\end{eqnarray}
Together with (\ref{3.47}), we obtain (\ref{3.48}).
\end{proof}

\vskip 0.4cm

From now on, we fix $z:=y(t_0,z_0,\frac{\d_5}{2})$.
Recall that $\psi_i^j$ and $\Lambda_n$ were defined in (\ref{3.7}) and (\ref{3.10}) respectively, $D(t,\e)$ and $C(z,\d_5)$ were defined in (\ref{2.19-1}) and (\ref{2.19}) respectively. Set
$$\O_{\e}:=\{(t,x):  t\in ((t_0-\eta_2)\vee 0,(t_0+\eta_2)\wedge T_1), \ x\in C(z,\d_5) \bigcap D(t,\e)\}. $$
\vskip 0.4cm

The regularity of $\G^z(t,x)$ and the convergence of $\G_n^z(t,x)$ are stated in the following two propositions. The proofs of theses results are quite lengthy. They are put in the appendix.

\begin{prp}  \label{P4.1.1}
For any  $1\le i,j\le d$,
\begin{eqnarray} \label{4.48}
\psi_{i}^j(\c , \c , \G^z( \c , \c ))\in W_{2d+2}^{0,1}((((t_0-\eta_2)\vee 0,(t_0+\eta_2)\wedge T_1)\times C(z,\d_5))\bigcap \tilde D),
\end{eqnarray}
and
\begin{eqnarray} \label{4.49}
\Gamma^z ( \c , \c ) \in W_{2d+2}^{0,2}(((t_0-\eta_2)\vee 0,(t_0+\eta_2)\wedge T_1)\times C(z,\d_5)\bigcap \tilde D).
\end{eqnarray}
\end{prp}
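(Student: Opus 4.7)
The plan is to regularize and pass to a limit. For each $n \ge N_1$ the flow $y_n$ and the hitting time $\Gamma_n^z$ are classical, the matrix $\psi_{n,i}^j$ solves the linear ODE (\ref{4.53}), and (\ref{3.49})--(\ref{3.14}) give $\partial_{x_i}\Gamma_n^z$ and $\partial_t \Gamma_n^z$ explicitly. I will first establish uniform (in both $n$ and $\varepsilon$) bounds for the second spatial derivatives of $\Gamma_n^z$ and for the first weak derivatives of $\psi_{n,i}^j(\cdot, \cdot, \Gamma_n^z(\cdot, \cdot))$ in $L^{2d+2}(\Omega_\varepsilon)$, then pass to weak limits, identify these limits using the uniform convergences (\ref{2.29}), (\ref{3.30}), (\ref{3.17}), and finally exhaust $\Omega := (((t_0-\eta_2)\vee 0, (t_0+\eta_2)\wedge T_1) \times C(z,\d_5)) \cap \tilde D$ by the nested $\Omega_\varepsilon$ as $\varepsilon \downarrow 0$. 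The crucial ingredient is Lemma \ref{L3.4}: it confines the whole orbit $\{y_n(t,x,r): r \in [0, \Gamma_n^z(t,x)]\}$ to $D(t, c(\varepsilon))$ for a fixed $c(\varepsilon) > 0$, so only the restriction of $\gamma_n$ to $\tilde D_{c(\varepsilon)}$ enters the estimates, and on this set the uniform Sobolev bound (\ref{3.79}) is available.

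Concretely, differentiating (\ref{3.49}) in $x_k$ expresses $\partial_{x_k}\partial_{x_i}\Gamma_n^z$ as a rational function of $\gamma_n$, $\nabla \gamma_n$ (at $y_n(t,x,\Gamma_n^z)$), $\psi_{n,i}^j(t,x,\Gamma_n^z)$, $\partial_{x_k}\psi_{n,i}^j(t,x,\Gamma_n^z)$, $\partial_{x_k}\Gamma_n^z$ and $\gamma(t_0,z)$, with denominator $\gamma_n \cdot \gamma(t_0,z)$ bounded away from zero by (\ref{3.13}). Differentiating (\ref{4.53}) in $x_k$ and solving the resulting linear ODE along the orbit writes $\partial_{x_k}\psi_{n,i}^j(t,x,r)$ as an integral whose integrand is linear in $\nabla^2 \gamma_n(t, y_n(t,x,\tau))$ with coefficients uniformly bounded in $(n,\tau)$ via (\ref{4.4}) and (\ref{2-17-2}). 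The essential estimate is therefore of the schematic form
$$\Big\| \int_0^{\Gamma_n^z(\cdot,\cdot)} \bigl|\nabla^2 \gamma_n(\cdot, y_n(\cdot, \cdot, r))\bigr|\, dr \Big\|_{L^{2d+2}(\Omega_\varepsilon)}^{2d+2} \les \rho_1^{2d+1} \int_{\Omega_\varepsilon}\int_0^{\rho_1} \bigl|\nabla^2 \gamma_n(t, y_n(t,x,r))\bigr|^{2d+2} dr\, dt\, dx,$$
obtained by H\"older in $r$; after Fubini and the Jacobian change-of-variables bound of Lemma \ref{L4.2}, the right hand side is controlled by $\|\gamma_n\|_{W^{1,2}_{2d+2}(\tilde D_{c(\varepsilon)})}^{2d+2}$, hence uniformly in $n$ and $\varepsilon$ by (\ref{3.79}). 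An analogous but simpler computation, differentiating $\psi_{n,i}^j(t,x,\Gamma_n^z(t,x))$ directly in $x_k$ and $t$ by the chain rule together with (\ref{3.14}), yields uniform bounds on the first weak derivatives of $\psi_{n,i}^j(\cdot,\cdot,\Gamma_n^z(\cdot,\cdot))$ in $L^{2d+2}(\Omega_\varepsilon)$.

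Given these bounds, I extract weakly convergent subsequences of $\Gamma_n^z$ in $W^{0,2}_{2d+2}(\Omega_\varepsilon)$ and of $\psi_{n,i}^j(\cdot,\cdot,\Gamma_n^z)$ in $W^{0,1}_{2d+2}(\Omega_\varepsilon)$. The uniform convergences (\ref{3.17}), (\ref{2.29}), (\ref{3.30}) together with the boundedness of the $\psi_{n,i}^j, \psi_i^j$ identify the weak $L^{2d+2}$-limits of $\Gamma_n^z$ and $\psi_{n,i}^j(\cdot,\cdot,\Gamma_n^z(\cdot,\cdot))$ with $\Gamma^z$ and $\psi_i^j(\cdot,\cdot,\Gamma^z(\cdot,\cdot))$ respectively, so the weak derivatives exist on each $\Omega_\varepsilon$ with Sobolev norms bounded independently of $\varepsilon$. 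Since $\Omega = \bigcup_{\varepsilon > 0} \Omega_\varepsilon$ is an increasing union and these bounds are uniform in $\varepsilon$, monotone convergence delivers (\ref{4.48}) and (\ref{4.49}) on the full set $\Omega$. The hard part will be the identification of weak limits through the composition with $\Gamma_n^z$: since $\nabla^2 \gamma_n$ converges only in $L^{2d+2}(\tilde D_\varepsilon)$ and not uniformly, the change-of-variable step via Lemma \ref{L4.2} must be applied at the level of $\nabla^2 \gamma_n$ itself and carefully pushed through the Fubini rearrangement on each $\Omega_\varepsilon$ before letting $\varepsilon \downarrow 0$.
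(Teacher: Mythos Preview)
Your proposal is correct and follows essentially the same route as the paper: regularize via $\gamma_n$, compute the spatial derivatives of the approximants explicitly from (\ref{4.53}) and (\ref{3.49}), control the resulting orbit integrals of $|\nabla^2\gamma_n|$ by H\"older in $r$ followed by the Jacobian bound of Lemma \ref{L4.2} and the uniform Sobolev estimate (\ref{3.79}), then pass to the limit and exhaust $\Omega$ by $\Omega_\e$.

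Two technical variations are worth noting. First, the paper composes $\psi_{n,i}^j$ with the \emph{limiting} hitting time $\Gamma^z$ rather than with $\Gamma_n^z$; since $\Gamma^z\in C_b^{0,1}$ by (\ref{4.11}) this makes the composite classically differentiable in $x$ and slightly shortens the bookkeeping, while (\ref{3.48}) still confines the orbits. Second, instead of weak compactness the paper proves \emph{strong} Cauchy convergence of $\psi_{n,i}^j(\cdot,\cdot,\Gamma^z(\cdot,\cdot))$ in $W^{0,1}_{2d+2}(\Omega_\e)$; this requires an auxiliary lemma (Lemma \ref{L7.1}) handling $\|\int|\nabla^2\gamma(t,y_{n_1})-\nabla^2\gamma(t,y_{n_2})|\,dr\|_{L^{2d+2}}\to 0$, which your weak-limit argument avoids. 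Your approach is a bit more streamlined here, at the cost of being less constructive. Finally, the ``hard part'' you flag---identifying the weak limit through the composition---is actually easy: by (\ref{3.30}) and (\ref{3.17}) the functions $\psi_{n,i}^j(\cdot,\cdot,\Gamma_n^z(\cdot,\cdot))$ converge \emph{uniformly} to $\psi_i^j(\cdot,\cdot,\Gamma^z(\cdot,\cdot))$, so the $L^{2d+2}$ limit is immediate and the weak derivative is inherited. The $L^{2d+2}$ convergence of $\nabla^2\gamma_n$ is needed only for the uniform bound, not for the identification. Also note that only spatial derivatives are required for $W^{0,1}_{2d+2}$ and $W^{0,2}_{2d+2}$, so the time differentiation you mention is unnecessary here (it belongs to Proposition \ref{P4.2.1}).
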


\begin{prp}  \label{P4.2.1}
Let $N_1(\e)$ be given in Lemma \ref{L3.4}, then
\begin{eqnarray} \label{4.50}
\begin{split}
\sup_{\e>0} \sup_{n\ge N_1(\e)} \| \Lambda_n(\cdot,\cdot,\G_n^z(\cdot,\cdot)) \|_{L^{2d+2}(\O_{\e})}<\infty.
\end{split}
\end{eqnarray}
Moreover, for any $\e>0$, we have
\begin{eqnarray}
\lim_{n,m \to \infty} \| \Lambda_n (\cdot, \cdot, \G_{n}^{z}(\cdot, \cdot) )  -\Lambda_m (\cdot, \cdot, \G_{m}^{z}(\cdot, \cdot) ) \|_{L^{2d+2}(\O_{\e})}=0, \label{4.51} \\
\lim_{n,m\to \infty} \|\p_t\G_n^z(\cdot,\cdot)-\p_t\G_m^z(\cdot,\cdot)\|_{L^{2d+2}(\O_{\e})}=0. \label{4.52}
\end{eqnarray}
\end{prp}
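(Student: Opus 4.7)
The plan is to use a variation-of-parameters representation of $\Lambda_n$ to reduce estimation of $\Lambda_n(\cdot,\cdot,\Gamma_n^z(\cdot,\cdot))$ to an integral of $\partial_t\gamma_n$ evaluated along the flow, and then pull everything back through a change of variables into an integral of $\partial_t\gamma_n$ over a subset of $\tilde D_{c\e}$, where the uniform Sobolev bound from Proposition \ref{P3.3} applies. More precisely, viewing (\ref{3.10}) as a linear inhomogeneous ODE in $r$, let $\Phi_n(t,x,r,\tau)$ denote its transition matrix, so that
\begin{equation*}
\Lambda_n(t,x,r)=\int_0^r \Phi_n(t,x,r,\tau)\,(\p_t\gamma_n)(t,y_n(t,x,\tau))\,d\tau.
\end{equation*}
Since $\g_n$ is bounded in $C^{0,1}_b$ uniformly in $n$ by (\ref{2-17-2}), Gronwall's lemma yields $\|\Phi_n(t,x,r,\tau)\|\le e^{c|r-\tau|}$ uniformly in $n,t,x$ and in $r,\tau\in(-\rho_1,\rho_1)$.

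For (\ref{4.50}), I would apply Hölder's inequality in the representation above at $r=\Gamma_n^z(t,x)\in(-\rho_1,\rho_1)$ to get the pointwise bound
\begin{equation*}
|\Lambda_n(t,x,\Gamma_n^z(t,x))|^{2d+2}\le c\,\rho_1^{2d+1}\int_{-\rho_1}^{\rho_1}|(\p_t\gamma_n)(t,y_n(t,x,\tau))|^{2d+2}\mathbbm{1}_{[0,\Gamma_n^z(t,x)]}(\tau)\,d\tau.
\end{equation*}
Integrating over $\O_\e$, applying Fubini, and invoking Lemma \ref{L3.4} (so that for $n\ge N_1(\e)$ and $\tau\in[0,\Gamma_n^z(t,x)]$ one has $y_n(t,x,\tau)\in D(t,c\e)$ for some $c=c(\theta_0,\delta_5)$), together with the change of variable estimate (\ref{3.16}) in Lemma \ref{L4.2}, one bounds the result by
\begin{equation*}
c\,\rho_1^{2d+2}\int_{-\rho_1}^{\rho_1}\|\p_t\gamma_n\|_{L^{2d+2}(\tilde D_{c\e})}^{2d+2}\,d\tau\le c'\,\sup_{\e'>0}\sup_{n\ge N_0(\e')}\|\gamma_n\|_{W^{1,2}_{2d+2}(\tilde D_{\e'})}^{2d+2},
\end{equation*}
which is finite uniformly in $\e$ and $n\ge N_1(\e)$ by (\ref{3.79}).

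For (\ref{4.51}) the strategy is parallel but requires splitting $\Lambda_n(\cdot,\cdot,\Gamma_n^z(\cdot,\cdot))-\Lambda_m(\cdot,\cdot,\Gamma_m^z(\cdot,\cdot))$ through the triangle inequality into: (i) a transition-matrix piece where $\Phi_n-\Phi_m$ appears, controlled by (\ref{3.30}) plus Gronwall; (ii) a flow piece where $(\p_t\gamma_m)(t,y_n(t,x,\tau))-(\p_t\gamma_m)(t,y_m(t,x,\tau))$ appears, handled by approximating $\p_t\gamma_m$ in $L^{2d+2}(\tilde D_{c\e})$ by a smooth function on $\tilde D_{c\e/2}$ (as in the proof of (\ref{3.88})) and using (\ref{2.29}); (iii) the main piece $(\p_t\gamma_n)(t,y_n(t,x,\tau))-(\p_t\gamma_m)(t,y_n(t,x,\tau))$ which, after the Hölder/Fubini/change-of-variable argument above, is controlled by $\|\p_t\gamma_n-\p_t\gamma_m\|_{L^{2d+2}(\tilde D_{c\e})}\to 0$ by (\ref{3.68}); and (iv) a boundary-in-$\tau$ piece stemming from the difference $\Gamma_n^z-\Gamma_m^z$ in the upper limit, whose measure in $\tau$ shrinks uniformly by (\ref{3.17}). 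I expect piece (ii)-(iii) to be the main obstacle, because the outer integrand involves $\p_t\gamma$ evaluated along the $n$-dependent flow, so one must combine the uniform $C^0$ convergence of $y_n$ with the Sobolev, not pointwise, convergence of $\p_t\gamma_n$; the density/approximation argument in Lemma \ref{L3.2.2} is the mechanism that bridges these.

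Finally, (\ref{4.52}) is an immediate consequence of (\ref{4.51}) together with the explicit identity (\ref{3.14}), since the scalar factor $(\gamma_n(t,y_n(t,x,\Gamma_n^z(t,x)))\cdot\gamma(t_0,z))^{-1}$ is bounded by $1/\cos\theta_1$ by (\ref{3.13}) and, using (\ref{2-17-2}), (\ref{2.29}) and (\ref{3.17}), converges uniformly on $\O_\e$ as $n\to\infty$; combining pointwise bounded convergence of the scalar with the $L^{2d+2}$-convergence from (\ref{4.51}) yields the claim.
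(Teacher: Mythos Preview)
Your strategy is essentially the same as the paper's: bound $\Lambda_n$ via Gronwall (equivalently, variation of parameters) by an integral of $(\partial_t\gamma_n)(t,y_n(t,x,\tau))$ along the flow, then push this back to $\|\partial_t\gamma_n\|_{L^{2d+2}(\tilde D_{\varrho(\e)})}$ using Lemma~\ref{L3.4} and the Jacobian estimate Lemma~\ref{L4.2}, and finally invoke (\ref{3.79}) and (\ref{3.68}). The proof of (\ref{4.50}) and of (\ref{4.52}) from (\ref{4.51}) is fine as written.

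Two places in your decomposition for (\ref{4.51}) need more care, and the paper handles them slightly differently. In piece~(ii) you propose to approximate $\partial_t\gamma_m$ by a smooth function; but $m$ is going to infinity simultaneously, so you would need the approximation to be uniform in $m$, which is not immediate. The paper avoids this by inserting the \emph{fixed} limit $\partial_t\gamma$ instead: it writes $(\partial_t\gamma_n)(y_n)-(\partial_t\gamma_m)(y_m)$ as the sum of $(\partial_t\gamma_n-\partial_t\gamma)(y_n)$, $(\partial_t\gamma)(y_n)-(\partial_t\gamma)(y_m)$, and $(\partial_t\gamma-\partial_t\gamma_m)(y_m)$; the outer two terms vanish by (\ref{3.68}) plus the change of variable, and only the middle term, involving a single fixed $L^{2d+2}$ function, requires the density/smooth-approximation argument (this is Lemma~\ref{L7.1} in the paper). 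In piece~(iv), saying that the measure $|\Gamma_n^z-\Gamma_m^z|$ shrinks is not enough on its own, because $(\partial_t\gamma_n)(t,y_n(t,x,\tau))$ is not bounded pointwise, only in $L^{2d+2}$; the paper deals with this by first replacing $\partial_t\gamma_n$ by $\partial_t\gamma$ and then truncating at level $M$: the part where $|\partial_t\gamma|\le M$ gives $M\cdot|\Gamma_n^z-\Gamma_m^z|\to 0$, while the part where $|\partial_t\gamma|>M$ is controlled by $\| (\partial_t\gamma)\mathbbm{1}_{\{|\partial_t\gamma|>M\}}\|_{L^{2d+2}}$ via the change of variable, and one lets $M\to\infty$ at the end. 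Both fixes are routine, but without them pieces (ii) and (iv) as stated do not quite close.
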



\begin{remark}
From (\ref{3.14}), (\ref{3.13}), (\ref{4.49}), (\ref{4.50}) and (\ref{4.52}), we see that
$$\Gamma^z ( \c , \c )\in W_{2d+2}^{1,2}(((t_0-\eta_2)\vee 0,(t_0+\eta_2)\wedge T_1)\times C(z,\d_5)\bigcap \tilde D). $$
\end{remark}

\section{Construction of test functions}
In this section, we will construct a family of auxiliary functions which will be used to prove
the pathwise uniqueness of the solutions of reflecting stochastic differential equations. Recall that $\t_1$ was defined in Lemma \ref{L3.2}. Let $t_0$, $z_0$, $\d_5$, $\eta_2$ and $z:=y(t_0,z_0,\frac{\d_5}{2})$ be defined as in Section 4.

\begin{lem} \label{L3.7}
Let $u_0 \in C_0^2(B(z,\d_5 \tan \t_1  ) \bigcap H_{t_0,z})$ be nonnegative with $u_0(z)=1$. Define $h(t,x):=u_0(y(t,x,\G^z(t,x)))$. Then \\
$(\mathrm{i})$. $h(t_0,z_0)=1$, \\
$(\mathrm{ii})$. $B(z,\d_5) \bigcap \mathrm{supp} \ h(t,\c) \subset C(z,\d_5)$ for  $t \in ((t_0-\eta_2)\vee 0,(t_0+\eta_2)\wedge T_1)$, \\
$(\mathrm{iii})$. $h$ belongs to the following space:
$$C_b^{0,1}(((t_0-\eta_2)\vee 0,(t_0+\eta_2)\wedge T_1)\times B(z,\d_5)) \bigcap W_{2d+2}^{1,2}(((t_0-\eta_2)\vee 0,(t_0+\eta_2)\wedge T_1)\times B(z,\d_5)\bigcap \tilde D).$$
\end{lem}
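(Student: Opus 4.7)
Item (i) follows from the uniqueness assertion in Lemma~\ref{L3.3}. By the very definition $z=y(t_0,z_0,\d_5/2)$, so $r=\d_5/2$ lies in $(-\rho_1,\rho_1)$ and satisfies $y(t_0,z_0,r)-z=0\in H_{t_0,z}$; consequently $\G^z(t_0,z_0)=\d_5/2$ and hence $h(t_0,z_0)=u_0(z)=1$. For (ii), if $x\in B(z,\d_5)$ and $h(t,x)\neq 0$, then $y(t,x,\G^z(t,x))$ lies in $\operatorname{supp} u_0\subset B(z,\d_5\tan\t_1)$, and the inclusion (\ref{3.83}) of Lemma~\ref{L2.7} yields $x\in C(z,\d_5)$ directly.

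For the $C_b^{0,1}$ part of (iii), on $((t_0-\eta_2)\vee 0,(t_0+\eta_2)\wedge T_1)\times B(z,\d_5)$ I would compose: $u_0\in C_0^2$ has bounded first derivatives; the flow $y$ is in $C^{0,1,1}$ (via Gronwall applied to $\g\in C_b^{0,1}$, as noted after the defining ODE); and $\G^z\in C_b^{0,1}$ by (\ref{4.11}). Hence $(t,x)\mapsto y(t,x,\G^z(t,x))$ is in $C_b^{0,1}$ on the prescribed domain, and composition with $u_0$ preserves this regularity.

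The $W_{2d+2}^{1,2}$ assertion on $\Omega:=(((t_0-\eta_2)\vee 0,(t_0+\eta_2)\wedge T_1)\times B(z,\d_5))\bigcap \tilde D$ is the main step. My plan is to approximate by the smooth functions $h_n(t,x):=u_0(y_n(t,x,\G_n^z(t,x)))$, which lie in $C^{1,2}$ on $\Omega$ by (\ref{4.10}) and the smoothness of $y_n$. The chain rule then expresses $\p_t h_n$, $\p_{x_i}h_n$ and $\p_{x_i}\p_{x_j}h_n$ as polynomial combinations of derivatives of $u_0$ evaluated at $y_n(t,x,\G_n^z)$, the Jacobian entries $\psi_{n,i}^j(t,x,\G_n^z)$ and $\Lambda_n^j(t,x,\G_n^z)$, the field $\g_n(t,y_n(t,x,\G_n^z))$ and its derivatives, and the first and second derivatives of $\G_n^z$.

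I would then pass to the limit using the convergence results of Sections 3 and 4. Proposition~\ref{P3.3} controls $\g_n\to\g$ in $W_{2d+2}^{1,2}(\tilde D_\e)$ for any $\e>0$; Proposition~\ref{P4.1.1} gives $\psi_i^j(\cdot,\cdot,\G^z)\in W_{2d+2}^{0,1}$; Proposition~\ref{P4.2.1} together with its Remark furnishes $\G^z\in W_{2d+2}^{1,2}$ and the required $L^{2d+2}$-convergence of $\Lambda_n(\cdot,\cdot,\G_n^z)$ and $\p_t\G_n^z$; Lemma~\ref{L3.5} validates the chain rule for terms of the form $\g(t,y(t,\cdot,\G^z))$ where the inner map is itself only in a Sobolev class. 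The principal obstacle is that $\g$ enjoys $W_{2d+2}^{1,2}$ regularity only on the interior subsets $\tilde D_\e$; this is circumvented by Lemma~\ref{L3.4}, whose estimates (\ref{3.46})--(\ref{3.48}) confine the arguments of $\g_n$ and $\g$ to $\tilde D_\e$ along the entire flow, so that limits can be taken first in $W_{2d+2}^{1,2}(\Omega\cap\tilde D_\e)$ for fixed $\e$ and then extended to all of $\Omega$ using the uniform-in-$\e$ bounds (\ref{3.79}) and (\ref{4.50}). Identification of the limit with $h$ follows from the $C_b^{0,1}$ convergence already established, completing the argument.
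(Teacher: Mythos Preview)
Your approach is essentially the paper's, and the plan is correct. Two small points of divergence are worth noting. First, the paper does not use the approximants $h_n$ for the spatial $W_{2d+2}^{0,2}$ regularity; instead it writes down the formula
\[
\p_{x_i}h(t,x)=\sum_{j}\p_{y_j}u_0\bigl(y(t,x,\G^z)\bigr)\bigl[\psi_i^j(t,x,\G^z)+\g^j(t,y(t,x,\G^z))\,\p_{x_i}\G^z\bigr]
\]
for the limit object directly and then invokes Proposition~\ref{P4.1.1} together with part~(ii). The $h_n$-approximation is used only for the time derivative, via Proposition~\ref{P4.2.1}. Your unified route also works, but you should be careful with the sentence ``$h_n\in C^{1,2}$ on $\Omega$ by (\ref{4.10}) and the smoothness of $y_n$'': (\ref{4.10}) gives only $\G_n^z\in C_b^{1,1}$, and by Remark~\ref{P3.3}+1 one has $\g_n\notin C^{1,2}(\tilde D)$; the $C^{1,2}$ regularity of $h_n$ holds only on $\Omega_\e$ for $n\ge N_1(\e)$, precisely because of Lemma~\ref{L3.4} as you say later. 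Second, Lemma~\ref{L3.5} concerns composition with the Zvonkin map $u(t,\cdot)$, not with the flow $y(t,\cdot,\G^z)$, so it is not the right citation for the chain rule you need here; that step is instead handled by the $C_b^{0,1}$ regularity of $y$ and $\G^z$ combined with the $W_{2d+2}^{0,1}$ control from Proposition~\ref{P4.1.1}.
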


\begin{proof}
By Lemma \ref{L2.7} , the choice of $u_0$ and the definition of $\G^z(t,x)$, we see that
$$h(t_0,z_0)=h(t_0,z)=u_0(z)=1,$$
and
\begin{eqnarray} \label{2.32}
\begin{split}
B(z,\d_5) \bigcap \mathrm{supp} \ h(t,\c) \subset C(z,\d_5), \ \forall t \in ((t_0-\eta_2)\vee 0,(t_0+\eta_2)\wedge T_1),
\end{split}
\end{eqnarray}
hence (i) and (ii) are proved.

\vskip 0.3cm

By Lemma \ref{L3.3} and the fact that $\g \in C^{0,1}_b([0,{T_1}] \times \Rd)$, we have for $1 \le i \le d$,
 $$h \in C_b^{0,1}(((t_0-\eta_2)\vee 0,(t_0+\eta_2)\wedge T_1)\times B(z,\d_5)),$$
 and
\begin{eqnarray} \nonumber
\begin{split}
\p_{x_i}h(t,x)=\sum_{1 \le j \le d}\p_{y_j}u_0(y(t,x,\G^z(t,x)))[\psi_i^j(t,x,\G^z(t,x))+\g^j(t,y(t,x,\G^z(t,x)))\p_{x_i}\G^z(t,x)].
\end{split}
\end{eqnarray}
From  Proposition \ref{P4.1.1} and (\ref{2.32}), it follows that
 $$h \in W_{2d+2}^{0,2}(((t_0-\eta_2)\vee 0,(t_0+\eta_2)\wedge T_1)\times B(z,\d_5)\bigcap \tilde D).$$

\vskip 0.3cm

Set $h_n(t,x):=u_0(y_n(t,x,\G_n^z(t,x)))$. Then $h_n(t,x)$ converges to $h(t,x)$ uniformly on $((t_0-\eta_2)\vee 0,(t_0+\eta_2)\wedge T_1)\times B(z,\d_5)$ and
\begin{eqnarray} \label{2-19-2}
\begin{split}
\p_{t}h_n(t,x)&=\nabla_y u_0(y_n(t,x,\G_n^z(t,x))) \c \Lambda_n(t,x,\G_n^z(t,x)) \\
&\ \ \ \ + \nabla_y u_0(y_n(t,x,\G_n^z(t,x))) \c \g_n(t,y_n(t,x,\G_n^z(t,x))) \p_t \G_n^z(t,x).\\
\end{split}
\end{eqnarray}
By (\ref{3.14}), (\ref{3.13}) and (\ref{4.50}), we have
\begin{eqnarray} \label{2-19-3}
\begin{split}
\sup_{\e>0}\sup_{n\ge N_1(\e)}\|\p_t\G_n^z\|_{L^{2d+2}(\O_{\e})}<\infty.
\end{split}
\end{eqnarray}
Therefore by (\ref{4.50}), (\ref{2-19-2}) and (\ref{2-19-3}) we have $\sup\limits_{\e>0} \sup\limits_{n\ge N_1(\e)}\|\p_{t}h_n  \|_{L^{2d+2}(\O_{\e})}<\infty$. On the other hand, by  (\ref{4.51}) and (\ref{4.52}),  we have for any $\e>0$, $\lim\limits_{n,m \to \infty}\|\p_{t}h_n-\p_{t}h_m\|_{L^{2d+2}(\O_{\e})}=0$. Hence $h  \in W_{2d+2}^{1,0}(((t_0-\eta_2)\vee 0,(t_0+\eta_2)\wedge T_1)\times B(z,\d_5)\bigcap \tilde D)$. The proof of (iii) is complete by combining the above statements about $h$ together.
\end{proof}

\vskip 0.4cm

Now, we start to construct the first important class of  test functions. The construction is inspired by \cite{Dupuis3} and \cite{Lundstrom}.

\begin{prp} \label{P3.5}
There exists a nonnegative function $H  \in C^{0,1}_b([0,T_1]\times \Rd) \bigcap W^{1,2}_{2d+2}(\tilde D)$ such that for any  $t \in [0,{T_1}]$ and $x \in u(t,\partial D)$
\begin{eqnarray} \label{3.84}
\begin{split}
 \nabla_{x}H(t,x) \c \g(t,x) \ge 1.
\end{split}
\end{eqnarray}

\end{prp}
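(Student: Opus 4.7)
The plan is to build $H$ as a finite sum of localized test functions, each of which exploits the flow $y(t,\cdot,\cdot)$ of the reflection field $\gamma$ in an essentially exact way. The whole argument hinges on two pointwise identities. First, I would check that
$$\sum_{i=1}^{d}\gamma^i(t,x)\,\psi_i^j(t,x,r)=\gamma^j(t,y(t,x,r))$$
throughout the chart of Lemma \ref{L3.3}: both sides, viewed as functions of $r$ with $(t,x)$ fixed, satisfy the same linear ODE $\partial_r V^j=\sum_k(\partial_{y_k}\gamma^j)(t,y(t,x,r))V^k$ (the left by \eqref{3.7}, the right by differentiating the flow equation $\partial_r y=\gamma(t,y)$) with the common initial value $\gamma^j(t,x)$ at $r=0$, so ODE uniqueness identifies them. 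Plugging this into \eqref{3.29} collapses that formula to
$$\nabla_x \Gamma^z(t,x)\cdot \gamma(t,x)=-1$$
exactly, and the chain rule applied to $h(t,x):=u_0(y(t,x,\Gamma^z(t,x)))$ then yields $\nabla_x h(t,x)\cdot \gamma(t,x)=0$ exactly (the two terms coming from the $\psi$-identity and from $\partial_{x_i}\Gamma^z$ cancel pointwise).

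With these identities in hand, for each $(t_0,z_0)$ with $z_0\in u(t_0,\partial D)$ I would take $z:=y(t_0,z_0,\delta_5/2)$, choose $u_0\in C_0^2(B(z,\delta_5\tan\theta_1)\cap H_{t_0,z})$ nonnegative with $u_0(z)=1$ as in Lemma \ref{L3.7}, fix a constant $C>\rho_1$, and define the local candidate
$$H_{t_0,z_0}(t,x):=(C-\Gamma^z(t,x))\,h(t,x).$$
Since $|\Gamma^z|<\rho_1<C$ on the chart, $H_{t_0,z_0}\geq 0$; the product rule together with the two identities above gives $\nabla_x H_{t_0,z_0}(t,x)\cdot\gamma(t,x)=h(t,x)$, which equals $u_0(z)=1$ at $(t_0,z_0)$ and therefore stays $\geq\tfrac12$ on some space-time neighborhood $V_{t_0,z_0}$ of $(t_0,z_0)$ by continuity of $h$. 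The regularity $H_{t_0,z_0}\in C^{0,1}_b\cap W^{1,2}_{2d+2}(\,\cdot\,\cap\tilde D)$ on the chart follows from Lemma \ref{L3.7}(iii) applied to $h$ and from the Remark after Proposition \ref{P4.2.1} for $\Gamma^z$, combined with the product rule. By Lemma \ref{L3.7}(ii), $h(t,\cdot)$ is compactly supported inside $C(z,\delta_5)\cap B(z,\delta_5)$ and vanishes on the relative boundary, and a smooth cutoff in $t$ alone (which commutes with $\nabla_x$ and hence does not affect $\nabla_x H_{t_0,z_0}\cdot\gamma$) handles the time ends, so extending $H_{t_0,z_0}$ by zero gives a function in $C^{0,1}_b([0,T_1]\times\Rd)\cap W^{1,2}_{2d+2}(\tilde D)$.

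The set $K:=\{(t,x):t\in[0,T_1],\,x\in u(t,\partial D)\}$ is compact, being the continuous image of $[0,T_1]\times\partial D$ under $\tilde u$, so the open cover $\{V_{t_0,z_0}\}_{(t_0,z_0)\in K}$ admits a finite subcover $V_1,\dots,V_N$ with corresponding local functions $H_1,\dots,H_N$ and heights $h_1,\dots,h_N$. Setting
$$H(t,x):=2\sum_{i=1}^{N}H_i(t,x),$$
one obtains a nonnegative function in $C^{0,1}_b([0,T_1]\times\Rd)\cap W^{1,2}_{2d+2}(\tilde D)$. For any $(t,x)\in u(t,\partial D)$, pick $i$ with $(t,x)\in V_i$; then $h_i(t,x)\geq\tfrac12$ and $h_j\geq 0$ for every $j$, so
$$\nabla_x H(t,x)\cdot\gamma(t,x)=2\sum_{j=1}^{N}h_j(t,x)\geq 2h_i(t,x)\geq 1,$$
which is \eqref{3.84}.

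I expect the most delicate step to be the extension-by-zero in each chart: one must verify that the localization provided by Lemma \ref{L3.7}(ii), together with the time cutoff, really preserves the $W^{1,2}_{2d+2}$ regularity on the non-smooth set $\tilde D$, rather than just on its interior. Once this is settled, the exact identity $\nabla\Gamma^z\cdot\gamma\equiv-1$ does all the analytic work and the lower bound on $\nabla H\cdot\gamma$ follows with no second-order losses or curvature corrections; the rest of the argument is a routine compactness and covering exercise.
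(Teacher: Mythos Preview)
Your two pointwise identities are correct: $\sum_i\gamma^i(t,x)\psi_i^j(t,x,r)=\gamma^j(t,y(t,x,r))$ follows from ODE uniqueness exactly as you say, and plugging into \eqref{3.29} gives $\nabla_x\Gamma^z\cdot\gamma\equiv-1$ and hence $\nabla_xh\cdot\gamma\equiv0$. The choice $H_{t_0,z_0}=(C-\Gamma^z)h$ with the exact identity $\nabla_xH_{t_0,z_0}\cdot\gamma=h\ge0$ is a clean variant of the paper's linear multiplier $\chi_2(x)=(x-z_0)\cdot\gamma(t_0,z_0)+M$, and the final compactness/covering step is fine.

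The genuine gap is the extension by zero in space. Lemma~\ref{L3.7}(ii) asserts only that $B(z,\delta_5)\cap\operatorname{supp}h(t,\cdot)\subset C(z,\delta_5)$; it does \emph{not} say the support stays away from $\partial B(z,\delta_5)$. The tube $C(z,\delta_5)$ reaches $\partial B(z,\delta_5)$, and $h$ is in general nonzero on those caps: already for constant $\gamma=e_d$ one has $h(t,x)=u_0(x_1,\dots,x_{d-1},z_d)$, so $h(t,z\pm\delta_5e_d)=u_0(z)=1$. Extending $H_{t_0,z_0}$ by zero therefore creates a jump across the spatial boundary of the chart and the resulting object is not even continuous, let alone in $C^{0,1}_b\cap W^{1,2}_{2d+2}$. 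A cutoff in $t$ alone cannot fix this.

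What is missing is precisely the step the paper supplies: a spatial cutoff $\chi_1\in C^{1,2}_0$ supported in $B(z,\delta_5)$ and equal to $1$ on $B(z,\kappa\delta_5)$ for some $\kappa\in(\tfrac12,1)$. Multiplying by $\chi_1$ adds the term $(\nabla_x\chi_1\cdot\gamma)(C-\Gamma^z)h$ to $\nabla_xH_{t_0,z_0}\cdot\gamma$, which is sign-indefinite. The paper kills this term \emph{on the boundary} by the geometric claim \eqref{2.22} (and its perturbation in $t$): using the calibrated inequalities \eqref{2.5}--\eqref{2-3} on $\theta_1$, one shows $u(t,\partial D)\cap\bigl(\bar C(z,\delta_5)\setminus B(z,\kappa\delta_5)\bigr)=\emptyset$, so that on $u(t,\partial D)\cap\operatorname{supp}h(t,\cdot)$ one has $\chi_1\equiv1$ and $\nabla_x\chi_1=0$. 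This geometric step is not avoidable by your shortcut; once you insert it, your construction goes through, and your multiplier $(C-\Gamma^z)$ may even be preferred to the paper's $\chi_2$ since it yields the exact equality $\nabla_xH_{t_0,z_0}\cdot\gamma=h$ rather than the inequality coming from $\gamma(t_0,z_0)\cdot\gamma(t,x)\ge\cos\theta_1$.
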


\begin{proof}
By Lemma \ref{L3.7}, we know that for any given $t_0\in [0,{T_1}]$ and $z_0 \in u(t_0,\partial D)$, there exists a nonnegative function $h(t,x):=u_0(y(t,x,\G^z(t,x)))$ with $h(t_0,z_0)=1$ and $h $ belongs to the following space:
$$C_b^{0,1}(((t_0-\eta_2)\vee 0,(t_0+\eta_2)\wedge T_1)\times B(z,\d_5))  \bigcap W_{2d+2}^{1,2}(((t_0-\eta_2)\vee 0,(t_0+\eta_2)\wedge T_1)\times B(z,\d_5)\bigcap \tilde D),$$
where $\d_5, \eta_2$ are dependent of $(t_0,z_0)$, and $z:=y(t_0,z_0,\frac{\d_5}{2})$.
Using the method of characteristics, we know that $h(t,x)$ is the solution to the following Cauchy problem:
\begin{align} \label{2.36}
 \left\{
\begin{aligned}
& \nabla_x h(t,x) \c \g(t,x)=0, \\
& h(t,\c)|_{H_{t_0, z}}=u_0.
\end{aligned}
 \right.
\end{align}

\vskip 0.3cm By (\ref{2.17}) and (\ref{2-3}), there exists a constant $\kappa \in (\frac{1}{2},1)$ such that
\begin{eqnarray} \label{2-1}
\begin{split}
\frac{ (\kappa^2-4 \tan^2 \t_1 )^{\frac{1}{2}} \cos \t_1  - 2 \tan \t_1   -\frac{1}{2}}{(\frac{5}{4}+ 4  \tan^2 \t_1  +2 \tan \t_1 - (\kappa^2-4 \tan^2 \t_1 )^{\frac{1}{2}} \cos \t_1 )^{\frac{1}{2}} } >\cos \t_0,
\end{split}
\end{eqnarray}
and
\begin{eqnarray} \label{2-2}
\begin{split}
\frac{ (\kappa^2-4 \tan^2 \t_1 )^{\frac{1}{2}}  \cos \t_1  -2 \tan \t_1  +\frac{1}{2} \cos \t_1 }{(\frac{9}{4}+4  \tan^2 \t_1  +2 \tan \t_1 )^{\frac{1}{2}}} >\cos \t_0.
\end{split}
\end{eqnarray}
Now we show that
\begin{eqnarray} \label{2.22}
\begin{split}
u(t_0,\p D)  \bigcap (\bar C(z,\d_5) \backslash B(z,\kappa \d_5))= \emptyset,
\end{split}
\end{eqnarray}
where $C(z,\d_5)$ was defined in (\ref{2.19}) and $\bar C(z,\d_5)$ is the closure of $C(z,\d_5)$.

Let $x\in \bar C(z,\d_5) \backslash B(z,\kappa \d_5)$, $\sigma:=(x-z)\c \g(t_0,z) $ and $\beta:=x-z-\sigma \g(t_0,z) $. Then it is easy to see that 
\begin{eqnarray} \label{2.21}
\begin{split}
|x-z_0|\le|x-z|+|z-z_0|\le \d_5+\frac{\d_5}{2}< \d_2,
\end{split}
\end{eqnarray}
and
\begin{eqnarray} \label{5.11}
\begin{split}
|x-z_0|\ge|x-z|- |z-z_0|\ge \kappa \d_5-\frac{\d_5}{2}>0.
\end{split}
\end{eqnarray}
Since $x \in \bar C(z,\d_5)$ we have
\begin{eqnarray} \label{2.34}
\begin{split}
|\beta|\le 2 \d_5 \tan \t_1 .
\end{split}
\end{eqnarray}
Hence $\d_5^2\ge |x-z|^2 \ge |\sigma|^2=|x-z|^2-|\beta|^2 \ge \kappa^2 \d_5^2- 4 \d_5^2 \tan ^2\t_1 $, which means that
\begin{eqnarray} \label{2.33}
\begin{split}
\frac{|\sigma|}{\d_5} \in [(\kappa^2-4 \tan^2 \t_1 )^{\frac{1}{2}} ,1].
\end{split}
\end{eqnarray}

If $\sigma \le 0$, then by (\ref{2.34}),
\begin{eqnarray}  \nonumber
\begin{split}
&\ \ \ \ |x-z_0|^2 \\
&=|x-z|^2+|z-z_0|^2+2  \int_0^{\frac{\d_5}{2}}(x-z)\c\g(t_0,y(t_0,z_0,\tau) d\tau\\
&\le \sigma^2+|\beta|^2 +|z-z_0|^2+2  \int_0^{\frac{\d_5}{2}} |\beta| |\g(t_0,y(t_0,z_0,\tau)| d\tau +2\sigma  \int_0^{\frac{\d_5}{2}}\g(t_0,z) \c\g(t_0,y(t_0,z_0,\tau) d\tau\\
&\le \sigma^2+ 4 \d_5^2 \tan^2 \t_1 + \frac{\d_5^2}{4}+ 2 \d_5^2 \tan \t_1  - |\sigma|  \d_5 \cos \t_1 , 
\end{split}
\end{eqnarray}
 which implies that
\begin{eqnarray} \label{2-19-4}  \nonumber
\begin{split}
\frac{ |x-z_0|^2}{\d_5^2}  \le \frac{\sigma^2}{\d_5^2} + 4 \tan^2 \t_1  +\frac{1}{4}+ 2   \tan \t_1 -  \frac{|\sigma| }{\d_5} \cos \t_1 . \\
\end{split}
\end{eqnarray}
Combining this with (\ref{2-1}), (\ref{2.34}) and (\ref{2.33}), we have
\begin{eqnarray} \label{2.23}
\begin{split}
&\ \ \ \ (x-z_0)\c (-\g(t_0,z_0))\\
&= -\sigma \g(t_0,z) \c \g(t_0,z_0)-\beta \c \g(t_0,z_0)-(z-z_0) \c \g(t_0,z_0)\\
&\ge |\sigma| \g(t_0,z) \c \g(t_0,z_0)  -|\beta|-|z-z_0| \\
&\ge (\frac{|\sigma| }{\d_5}  \cos \t_1  - 2  \tan \t_1  -\frac{1}{2})\d_5 \\
&\ge \frac{\frac{|\sigma| }{\d_5}  \cos \t_1  - 2  \tan \t_1  -\frac{1}{2}}{(\frac{\sigma^2}{\d_5^2} + 4 \tan^2 \t_1  +\frac{1}{4}+ 2   \tan \t_1 -  \frac{|\sigma| }{\d_5} \cos \t_1 )^{\frac{1}{2}} } |x-z_0|\\
&\ge \frac{ (\kappa^2-4 \tan^2 \t_1 )^{\frac{1}{2}} \cos \t_1  - 2 \tan \t_1   -\frac{1}{2}}{(\frac{5}{4}+ 4  \tan^2 \t_1  +2 \tan \t_1 - (\kappa^2-4 \tan^2 \t_1 )^{\frac{1}{2}} \cos \t_1 )^{\frac{1}{2}} } |x-z_0| \\
& >|x-z_0| \cos \t_0.
\end{split}
\end{eqnarray}
(\ref{2.21}), (\ref{5.11}) and (\ref{2.23}) show that  $x \in C(z_0,-\g(t_0,z_0),\d_2,\t_0)\subset u(t_0, \bar D)^c$, which in particular implies $x \not\in u(t_0,\p D)$.

If $\sigma > 0$, then by (\ref{2.34})
\begin{eqnarray}  \nonumber
\begin{split}
&\ \ \ \ |x-z_0|^2\\
&=|x-z|^2+|z-z_0|^2+2  \int_0^{\frac{\d_5}{2}}(x-z)\c\g(t_0,y(t_0,z_0,\tau) d\tau\\
&\le \sigma^2+|\beta|^2+|z-z_0|^2+2  \int_0^{\frac{\d_5}{2}} |\beta| |\g(t_0,y(t_0,z_0,\tau)| d\tau + 2\sigma  \int_0^{\frac{\d_5}{2}}\g(t_0,z) \c\g(t_0,y(t_0,z_0,\tau) d\tau\\
&\le \sigma^2+ 4 \d_5^2 \tan^2 \t_1 + \frac{\d_5^2}{4}+ 2 \d_5^2 \tan \t_1 +  \sigma \d_5, 
\end{split}
\end{eqnarray}
 which implies that
\begin{eqnarray}  \nonumber
\begin{split}
\frac{ |x-z_0|^2}{\d_5^2}  \le \frac{\sigma^2}{\d_5^2} + 4 \tan^2 \t_1  +\frac{1}{4}+ 2   \tan \t_1 +  \frac{\sigma}{\d_5}  . \\
\end{split}
\end{eqnarray}
Combining this with (\ref{2-2}), (\ref{2.34}) and (\ref{2.33}), we have
\begin{eqnarray}  \nonumber
\begin{split}
&\ \ \ \ (x-z_0)\c \g(t_0,z_0)\\
&= \sigma \g(t_0,z) \c \g(t_0,z_0)+\beta \c \g(t_0,z_0)+(z-z_0) \c \g(t_0,z_0)\\
&\ge \sigma \g(t_0,z) \c \g(t_0,z_0)  -|\beta|+\int_0^{\frac{\d_5}{2}} \g(t_0,y(t_0,z_0,\tau))\c \g(t_0,z_0) d \tau\\
&\ge ( \frac{\sigma}{\d_5} \cos \t_1  - 2  \tan \t_1  +\frac{1}{2} \cos \t_1 )\d_5 \\
&\ge \frac{ \frac{\sigma}{\d_5}  \cos \t_1  -2 \tan \t_1  +\frac{1}{2} \cos \t_1  }{(\frac{\sigma^2}{\d_5^2} + 4 \tan^2 \t_1  +\frac{1}{4}+ 2   \tan \t_1 +  \frac{\sigma}{\d_5}  )^{\frac{1}{2}} } |x-z_0|\\
& \ge \frac{ (\kappa^2-4 \tan^2 \t_1 )^{\frac{1}{2}}  \cos \t_1  -2 \tan \t_1  +\frac{1}{2} \cos \t_1 }{(\frac{9}{4}+4  \tan^2 \t_1  +2 \tan \t_1 )^{\frac{1}{2}}}|x-z_0|\\
& >|x-z_0| \cos \t_0.
\end{split}
\end{eqnarray}
Together with (\ref{2.21}) and (\ref{5.11}) we see that  $x \in C(z_0, \g(t_0,z_0),\d_2,\t_0)\subset u(t_0,  D) $, which  again implies $x \not\in u(t_0,\p D)$. Hence we obtain (\ref{2.22}).

\vskip 0.3cm

By (\ref{3.1}) and (\ref{2.22}), there exists a $\eta_3\in (0,\eta_2)$ such that for
$t\in ((t_0-\eta_3)\vee 0,(t_0+\eta_3)\wedge T_1)$, we have $u(t,\p D)  \bigcap (\bar C(z,\d_5) \backslash B(z,\kappa \d_5))= \emptyset$. Together with Lemma \ref{L3.7} we obtain that
\begin{eqnarray} \label{2.35}
\begin{split}
u(t,\p D) \bigcap B(z,\d_5) \bigcap (\mathrm{supp} \ h(t,\c) \backslash B(z,\kappa \d_5))= \emptyset. \\
\end{split}
\end{eqnarray}

Now take a nonnegative function $\chi_1 \in C_0^{1,2}(((t_0-\eta_3)\vee 0,(t_0+\eta_3)\wedge T_1) \times B(z,\d_5))$ such that $\chi_1 =1$ on $((t_0-\frac{\eta_3}{2})\vee 0,(t_0+\frac{\eta_3}{2})\wedge T_1) \times B(z,\kappa \d_5)$, and choose a constant $M>0$ large enough, such that $\chi_2(x):=(x-z_0) \c \g(t_0,z_0)+M$ is nonnegative on $B(z,\d_5)$. Set $h_{t_0,z_0}(t,x):=h(t,x) \chi_1(t,x) \chi_2(x)$. We see that $h_{t_0,z_0}$ belongs to the following space:
 $$ C_0^{0,1}(((t_0-\eta_3)\vee 0,(t_0+\eta_3)\wedge T_1)\times B(z,\d_5))  \bigcap W_{2d+2}^{1,2}(((t_0-\eta_3)\vee 0,(t_0+\eta_3)\wedge T_1)\times B(z,\d_5)\bigcap \tilde D).$$ Note that by (\ref{2.35}), $\chi_1=1$ on the neighborhood  of
 $$\{(t,x):t \in ((t_0-\frac{\eta_3}{2})\vee 0,(t_0+\frac{\eta_3}{2})\wedge T_1), \ x \in u(t, \p D) \cap B(z,\d_5)\cap \mathrm{supp} \ h(t,\c)\}.$$
 Using the above fact, Lemma \ref{L3.2} and (\ref{2.36}) we have
\begin{eqnarray}  \nonumber
\begin{split}
\nabla_x h_{t_0,z_0}(t_0,z_0) \c \g(t_0,z_0)&=\chi_2(z_0) \nabla_x h(t_0,z_0) \c \g(t_0,z_0)+h(t_0,z_0) \nabla_x \chi_2(z_0) \c \g(t_0,z_0)\\
&=1,
\end{split}
\end{eqnarray}
and
\begin{eqnarray}  \nonumber
\begin{split}
\nabla_x h_{t_0,z_0}(t,x) \c \g(t,x)&=\chi_2(x) \nabla_x h(t,x) \c \g(t,x)+h(t,x) \nabla_x \chi_2(x) \c \g(t,x)\\
&\ge 0,
\end{split}
\end{eqnarray}
for $t \in ((t_0-\frac{\eta_3}{2})\vee 0,(t_0+\frac{\eta_3}{2})\wedge T_1)$ and $x \in u(t, \p D) \cap B(z,\d_5)$. Now, by a standard compactness argument we can construct  a nonnegative function $H   \in C^{0,1}_b([0,T_1]\times \Rd) \bigcap W^{1,2}_{2d+2}(\tilde D)$ such that (\ref{3.84}) holds for any  $t \in [0,{T_1}]$ and $x \in u(t,\partial D)$.
\end{proof}

\vskip 0.4cm

Following exactly the argument of Lemma 4.4 in \cite{Dupuis1}, we have the next result.

\begin{lem}

There exist a function $g\in C^{1}(\R^{2d})\bigcap C^{2}((\Rd \setminus \{0\}) \times \Rd)$ and positive constants $M_4,M_5$, satisfying that for  any $\rho,\xi \in \Rd$ with $|\xi|  \le 1$ the following conditions hold:
\begin{eqnarray}
& (\mathrm{i}).&   g(0,\xi)=0, \label{5.15}\\
& (\mathrm{ii}).&   g(\rho,\xi)\ge M_4 |\rho|^{2}, \label{2.37} \\
& (\mathrm{iii}).&    \nabla_{\rho}g(\rho,\xi)\c \xi \ge 0,  \  \mbox{for} \ \rho \c \xi \ge -\cos \t_0 |\rho|  \  \mbox{and} \   |\xi|=1,  \label{3.40}\\
& (\mathrm{iv}). &   \nabla_{\rho}g(\rho,\xi)\c \xi \le 0, \  \mbox{for} \ \rho \c \xi \le \cos \t_0 |\rho| \  \mbox{and} \  |\xi|=1,  \label{3.55}\\
 & (\mathrm{v}).&   |\nabla_{\rho}g(\rho,\xi)| \le  M_5 |\rho|,  \   |\nabla_{\xi}g(\rho,\xi)| \le  M_5 |\rho|^{2}, \ \mbox{for} \ |\rho| \neq 0 ,  \label{3.56}\\
& (\mathrm{vi}). &    |\p_{\rho_i}\p_{\rho_j} g(\rho,\xi) | \le M_5, \  |\p_{\xi_i} \p_{\rho_j}g(\rho,\xi) | \le M_5 |\rho|, \nonumber \\
& &   | \p_{\xi_i}\p_{\xi_j}g(\rho,\xi) |  \le M_5|\rho|^{2}, \  \mbox{for} \ |\rho| \neq 0, \label{3.58}
\end{eqnarray}
where $\t_0$ was defined in Proposition \ref{P2.2}.
\end{lem}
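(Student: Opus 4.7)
The plan is to exhibit $g$ in the explicit radial-angular form
$$
g(\rho,\xi) \;:=\; |\rho|^2\,\psi\!\left(\tfrac{\rho\cdot\xi}{|\rho|}\right) \quad (\rho\neq 0), \qquad g(0,\xi):=0,
$$
for a suitable smooth scalar function $\psi:\R\to\R$, and to choose $\psi$ so that the pointwise sign conditions (iii)-(iv) reduce to a single condition on $\psi$. A direct computation with $u:=\rho\cdot\xi/|\rho|$ gives
$$
\nabla_\rho g = 2\rho\,\psi(u) + \psi'(u)\bigl(|\rho|\xi - u\,\rho\bigr), \qquad \nabla_\xi g = |\rho|\,\psi'(u)\,\rho,
$$
so that for $|\xi|=1$,
$$
\nabla_\rho g(\rho,\xi)\cdot\xi \;=\; |\rho|\bigl[\, 2u\,\psi(u) + (1-u^2)\,\psi'(u)\,\bigr].
$$

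Conditions (iii) and (iv) apply simultaneously when $u\in[-\cos\theta_0,\cos\theta_0]$, and therefore force the bracket above to vanish identically there. Solving the linear ODE $(1-u^2)\psi'=-2u\psi$ gives $\psi(u)=c(1-u^2)$, so I take $\psi(u):=1-u^2$ on $[-\cos\theta_0,\cos\theta_0]$ and extend $\psi$ smoothly to all of $\R$ so that (a) $\psi\ge c_0>0$ on $[-1,1]$, and (b) the bracket is $\ge 0$ for $u>\cos\theta_0$ and $\le 0$ for $u<-\cos\theta_0$. A concrete prescription is to let $\psi$ transition monotonically to the positive constant $\sin^2\theta_0$ on $|u|\ge\cos\theta_0+\varepsilon$: there the bracket equals $2u\sin^2\theta_0$, which automatically has the correct sign, while on the transition zone a standard monotone mollification preserves both positivity of $\psi$ and the sign of the bracket.

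With this $\psi$ the six properties are verified directly. (i) is the definition; (ii) holds with $M_4:=\min_{|u|\le 1}\psi(u)>0$ since $u\in[-|\xi|,|\xi|]\subseteq[-1,1]$; (iii) and (iv) are built into the choice of $\psi$. For (v)-(vi), one differentiates the formulas for $\nabla_\rho g$ and $\nabla_\xi g$ once more and uses that $\psi,\psi',\psi''$ are bounded; the scalings $|\nabla_\rho g|=O(|\rho|)$, $|\nabla_\xi g|=O(|\rho|^2)$, $|\partial^2_{\rho\rho} g|=O(1)$, $|\partial^2_{\rho\xi} g|=O(|\rho|)$, $|\partial^2_{\xi\xi} g|=O(|\rho|^2)$ then follow, uniformly in $|\xi|\le 1$. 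The main technical obstacle is the global $C^1$-regularity at $\rho=0$ combined with the sharp scalings in (v)-(vi): both $|\rho|$ and the direction $\rho/|\rho|$ are singular at the origin, so one must verify that the prefactor $|\rho|^2$ exactly absorbs the singularities produced when differentiating $u$. The explicit formulas above make this cancellation manifest, yielding $g\in C^1(\R^{2d})$ (with $\nabla g(0,\xi)=0$) and $g\in C^2((\R^d\setminus\{0\})\times\R^d)$; this is the content of the construction of Dupuis-Ishii being invoked.
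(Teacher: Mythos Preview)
The paper gives no proof of its own here, simply citing Lemma~4.4 of Dupuis--Ishii \cite{Dupuis1}; your construction \emph{is} essentially that lemma, so the approaches coincide. The ansatz $g(\rho,\xi)=|\rho|^2\psi(\rho\cdot\xi/|\rho|)$, the reduction to the bracket $2u\psi(u)+(1-u^2)\psi'(u)$, the ODE forcing $\psi=c(1-u^2)$ on $[-\cos\theta_0,\cos\theta_0]$, and the scaling verification of (v)--(vi) are all correct.

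One small inaccuracy in your concrete prescription: the transition on $[\cos\theta_0,\cos\theta_0+\varepsilon]$ cannot be monotone. Since $\psi(\cos\theta_0)=\sin^2\theta_0$ while $\psi'(\cos\theta_0^-)=-2\cos\theta_0<0$, any $C^2$ extension of $\psi$ that levels off at the same value $\sin^2\theta_0$ must first dip below and then return. This is harmless for the argument: writing $\psi(u)=(1-u^2)\phi(u)+\sin^2\theta_0\,(1-\phi(u))$ with a smooth cutoff $\phi$ decreasing from $1$ to $0$ on $[\cos\theta_0,\cos\theta_0+\varepsilon]$, one computes
\[
2u\psi+(1-u^2)\psi' \;=\; 2u\sin^2\theta_0\,(1-\phi)\;+\;(1-u^2)\bigl((1-u^2)-\sin^2\theta_0\bigr)\phi',
\]
and both terms are nonnegative on $[\cos\theta_0,1]$ (the second because $1-u^2\le\sin^2\theta_0$ and $\phi'\le 0$). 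So the sign of the bracket and the positivity of $\psi$ are preserved without monotonicity, and your proof goes through.
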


\vskip 0.4cm

Take $\sigma \in C^{2}(\R)$ such that $\sigma(t)=1$ for $t \le \frac{1}{2}$, $\sigma(t)=t$ for $t \ge 2$ and $\sigma'(t)\ge 0, \sigma(t)\ge t$ for $t \in \R$. It is easy to see that $\omega(\rho,\xi):=\sigma(g(\rho,\xi)) \in C^2(\R^{2d})$. Now we introduce the second important class of  test functions. For $\varepsilon>0$,  define
\begin{eqnarray} \label{5.20}
\begin{split}
f_{\e}(t,x,y):=\e \o(\frac{u(t,x)-u(t,y)}{\e},n(x)).
\end{split}
\end{eqnarray}
The following result holds.

\begin{prp}
There exist positive constants $M_6$ and $M_7$, which are independent of $\e$, such that  for $t\in [0,T_1]$ and $\rho,\xi \in \Rd$ with $|\xi|\le 1$,
\begin{eqnarray}
& (\mathrm{i}).&|\nabla_{\rho}\o(\rho,\xi)|\le M_6|\rho|, \ |\nabla_{\xi}\o(\rho,\xi)|\le M_6|\rho|^2,  \label{3.61}\\
& (\mathrm{ii}).&|\p_{\rho_i} \p_{\rho_j}\o(\rho,\xi)|\le M_6, \ |\p_{\xi_i} \p_{\rho_j}\o(\rho,\xi)|\le M_6|\rho|, |\p_{\xi_i} \p_{\xi_j} \o(\rho,\xi)|\le M_6|\rho|^2, \label{3.38}\\
& (\mathrm{iii}).&M_7\frac{|x-y|^2}{\e}\le f_{\e}(t,x,y) \le \e+\frac{M_6|x-y|^2}{\e} ,\  \mbox{for} \  x,y \in \bar{D} ,  \label{3.36}\\
& (\mathrm{iv}). &    \nabla_x f_{\e}(t,x,y) \c n(x)\le M_6 \frac{|x-y|^2}{\e}, \  \mbox{for} \  x\in \p D \  \mbox{and} \  y\in \bar{D}, \label{3.53} \\
& (\mathrm{v}). &    \nabla_y f_{\e}(t,x,y) \c n(y)\le M_6 \frac{|x-y|^2}{\e}, \  \mbox{for} \  x\in  \bar{D} \  \mbox{and} \  y\in \p D. \label{3.54}
\end{eqnarray}

\end{prp}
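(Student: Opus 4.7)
The plan is to prove the five assertions by direct chain-rule computation, together with the structural bounds on $g$ from the preceding lemma, the Neumann boundary condition $\partial u^T/\partial n = n$, and the exterior cone condition from Proposition \ref{P2.2}. For (i), I would write $\nabla_\rho\omega(\rho,\xi)=\sigma'(g(\rho,\xi))\nabla_\rho g(\rho,\xi)$ and $\nabla_\xi\omega(\rho,\xi)=\sigma'(g)\nabla_\xi g(\rho,\xi)$, and immediately read off the stated estimates from $\|\sigma'\|_\infty<\infty$ and property (v) of $g$. For (ii), differentiating once more gives
\[
\partial_{\rho_i}\partial_{\rho_j}\omega=\sigma''(g)\partial_{\rho_i}g\,\partial_{\rho_j}g+\sigma'(g)\partial_{\rho_i}\partial_{\rho_j}g,
\]
and analogues for the $(\xi,\rho)$ and $(\xi,\xi)$ second derivatives. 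I would then split into three regimes: where $g(\rho,\xi)\le\tfrac12$ all derivatives of $\omega$ vanish since $\sigma\equiv 1$ there; where $g\ge 2$ we have $\sigma''=0$ and $\sigma'=1$ so the bounds in (vi) transfer directly; on the transition region $g\in(\tfrac12,2)$, the lemma's bound $\tfrac12\le g\le \tfrac12 M_5|\rho|^2$ (which follows from $g(0,\xi)=0$ and (v)) confines $|\rho|$ to a bounded annulus bounded away from zero, so each term is bounded by a constant and absorbed into $M_6|\rho|$ (or $M_6|\rho|^2$) using that lower bound on $|\rho|$.

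For (iii), the upper bound on $g$ just derived and $\sigma(t)\le 1+t$ on $[0,\infty)$ (consequence of $\sigma(t)=1$ for $t\le\tfrac12$, $\sigma(t)=t$ for $t\ge 2$, monotonicity, and $\sigma(t)\ge t$) yield $\omega(\rho,\xi)\le 1+\tfrac12 M_5|\rho|^2$, while property (ii) of $g$ combined with $\sigma(t)\ge t$ gives $\omega(\rho,\xi)\ge M_4|\rho|^2$. Substituting $\rho=(u(t,x)-u(t,y))/\e$ and invoking the bi-Lipschitz estimate (\ref{3.2}) from Proposition \ref{C3.2} converts these into the claimed inequalities, with $M_7=M_4 M_1^2$ and $M_6$ chosen to dominate $\tfrac12 M_5 M_2^2$.

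For (iv) I differentiate:
\[
\nabla_x f_\e(t,x,y)\cdot n(x)=\sum_{i,j}n^i(x)\partial_{x_i}u^j(t,x)\,\partial_{\rho_j}\omega+\e\sum_{i,j}n^i(x)\partial_{x_i}n^j(x)\,\partial_{\xi_j}\omega,
\]
and use the Neumann condition to collapse the first sum to $\nabla_\rho\omega(\rho,n(x))\cdot n(x)$. By the exterior cone condition (\ref{3.59}), for $y\in\bar D$ with $|u(t,x)-u(t,y)|<\d_2$ one has $\rho\cdot n(x)\le\cos\t_0|\rho|$, so property (iv) of $g$ and $\sigma'\ge 0$ give $\nabla_\rho\omega(\rho,n(x))\cdot n(x)\le 0$. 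When instead $|u(t,x)-u(t,y)|\ge\d_2$, the bi-Lipschitz bound forces $|x-y|\ge\d_2/M_2$, so the crude estimate $|\nabla_\rho\omega|\le M_6|\rho|$ from (i) upgrades to an $M_6 M_2^2\d_2^{-1}|x-y|^2/\e$ bound. The second sum is estimated via (i) and the $C^2$-ness of $n$ by $\e\cdot C|\nabla_\xi\omega|\le CM_6|u(t,x)-u(t,y)|^2/\e\lesssim|x-y|^2/\e$.

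Assertion (v) is similar but encounters the mismatch $\xi=n(x)$ while the test direction is $n(y)$, which I view as the main obstacle. Using the Neumann condition again yields $\nabla_y f_\e\cdot n(y)=-\nabla_\rho\omega(\rho,n(x))\cdot n(y)$, and I would split
\[
\nabla_\rho\omega(\rho,n(x))\cdot n(y)=\nabla_\rho\omega(\rho,n(y))\cdot n(y)+\bigl[\nabla_\rho\omega(\rho,n(x))-\nabla_\rho\omega(\rho,n(y))\bigr]\cdot n(y).
\]
The exterior cone at $y$ (applied to $u(t,x)\in u(t,\bar D)$) shows that $\rho\cdot n(y)\ge-\cos\t_0|\rho|$ when $|u(t,x)-u(t,y)|<\d_2$, so property (iii) of $g$ makes the first piece nonnegative; the large-$|\rho|$ case is handled exactly as in (iv). For the second piece I apply the mean value theorem in the $\xi$-variable and use the bound $|\partial_{\xi_i}\partial_{\rho_j}\omega|\le M_6|\rho|$ from (ii), together with the Lipschitz estimate $|n(x)-n(y)|\lesssim|x-y|$, to obtain a bound of order $|\rho|\,|x-y|=|u(t,x)-u(t,y)|\,|x-y|/\e\lesssim|x-y|^2/\e$. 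Combining the two pieces and flipping signs gives (v).
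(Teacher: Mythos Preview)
Your approach is correct and mirrors the paper's proof closely, including the crucial splitting in (v) where you replace $n(x)$ by $n(y)$ in the $\xi$-slot of $\nabla_\rho\omega$ and control the discrepancy via the mixed second derivative bound together with $|n(x)-n(y)|\lesssim|x-y|$; this is exactly what the paper does (at the level of $g$ rather than $\omega$).

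One small gap: in (iii) you assert $\sigma(t)\le 1+t$ as a consequence of the listed properties of $\sigma$, but this does not follow. A monotone $C^2$ function with $\sigma(1/2)=1$, $\sigma(2)=2$, $\sigma(t)\ge t$ can rise steeply just after $t=1/2$ and exceed $1+t$ on part of $(1/2,2)$. Since the statement requires the exact coefficient $\e$ (not $C\e$) in the upper bound, you need a sharper argument. The paper obtains it by writing
\[
\omega(\rho,\xi)=\sigma(g(0,\xi))+\int_0^1\sigma'(g(\lambda\rho,\xi))\,\nabla_\rho g(\lambda\rho,\xi)\cdot\rho\,d\lambda
=1+\int_0^1\sigma'(g(\lambda\rho,\xi))\,\nabla_\rho g(\lambda\rho,\xi)\cdot\rho\,d\lambda,
\]
and bounding the integral by $\|\sigma'\|_\infty\cdot\tfrac12 M_5|\rho|^2$ via property (v) of $g$. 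Alternatively, since $\sigma$ is being \emph{chosen}, you may simply add the requirement $\sigma'\le 1$ to its construction (which is compatible with the other conditions), and then your argument goes through.
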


\begin{proof}
Let $\rho,\xi \in \Rd$ with $|\xi|\le 1$, then by (\ref{3.56}) and the boundedness  of $\sigma'(t)$,
it is easy to see that $|\nabla_{\rho}\o(\rho,\xi)|\le M_6|\rho|$ and $|\nabla_{\xi} \o(\rho,\xi)|\le M_6|\rho|^2$
for some positive constant $M_6$.
\vskip 0.3cm
By (\ref{2.37}), (\ref{3.56}) and (\ref{3.58}), we have
\begin{eqnarray}  \nonumber
\begin{split}
|\p_{\rho_i} \p_{\rho_j}\o(\rho,\xi)|&=|\sigma''(g(\rho,\xi))\p_{\rho_i}g(\rho.\xi )\p_{\rho_j}g(\rho,\xi )+\sigma'(g(\rho,\xi ))\p_{\rho_i}\p_{\rho_j}g(\rho,\xi)|\\
&\les |\rho|^2I_{g(\rho,\xi)\le 2}(\rho,\xi)+1\\
&\le |\rho|^2I_{M_4|\rho|^2 \le 2}(\rho,\xi)+1\\
&\le M_6,
\end{split}
\end{eqnarray}
Similarly, we also have $|\p_{\xi_i} \p_{\rho_j}\o(\rho,\xi)| \le M_6 |\rho|$ and $|\p_{\xi_i} \p_{\xi_j} \o(\rho,\xi)|\le M_6|\rho|^2$.

\vskip 0.3cm

Now we show (\ref{3.36}). Let $x,y \in \bar{D}$.
By (\ref{3.2}), (\ref{2.37}) and the fact that $\sigma(t)\ge t$, we have
\begin{eqnarray} \nonumber
\begin{split}
f_{\e}(t,x,y)\ge \e (g(\frac{u(t,x)-u(t,y)}{\e},n(x)))\ge \e M_4  |\frac{u(t,x)-u(t,y)}{\e}|^2 \ge M_7\frac{|x-y|^2}{\e},
\end{split}
\end{eqnarray}
for some constant $M_7>0$. By (\ref{3.2}), (\ref{5.15}) and (\ref{3.56}) we have
\begin{eqnarray} \nonumber
\begin{split}
&\ \ \ \ f_{\e}(t,x,y)\\
&= \e \sigma((g(0,n(x))))\\
&\ \ \ \ +  \e \int_0^1\sigma'(g(\frac{\lambda (u(t,x)-u(t,y))}{\e},n(x)))\nabla_{\rho}g(\frac{\lambda (u(t,x)-u(t,y))}{\e},n(x))\c \frac{u(t,x)-u(t,y)}{\e}d\lambda \\
&\le \e+\e \int_0^1 c |\frac{\lambda (u(t,x)-u(t,y))}{\e}| |\frac{u(t,x)-u(t,y)}{\e}|d\lambda  \le \e+\frac{M_6|x-y|^2}{\e}.\\
\end{split}
\end{eqnarray}

\vskip 0.3cm

Next we  show (\ref{3.53}). When $x\in \p D$ and $y \in \bar D$ satisfying that $|x-y|< \frac{\d_2}{M_2}$, by (\ref{3.2}) we have $|u(t,x)-u(t,y)|< \d_2$. Combining this with (\ref{3.59}) we deduce that
$$\frac{u(t,x)-u(t,y)}{\e} \c n(x) \le \cos \t_0 |\frac{u(t,x)-u(t,y)}{\e}|.$$
In view of  (\ref{3.2}), (\ref{3.55}), (\ref{3.56}), taking into consideration of  the facts $\nabla_x u^i(t,x)\c n(x)=n^i(x)$ and $\sigma'(t)\ge 0$, we have
\begin{eqnarray} \nonumber
\begin{split}
&\ \ \ \ \nabla_x f_{\e}(t,x,y) \c n(x)\\
&= \sigma'(g(\frac{u(t,x)-u(t,y)}{\e},n(x)))\sum_{1 \le i \le d}[\p_{\rho_i}g(\frac{u(t,x)-u(t,y)}{\e},n(x)) \nabla_x u^i(t,x)\c n(x)\\
&\ \ \ \ + \e \p_{\xi_i}g(\frac{u(t,x)-u(t,y)}{\e},n(x)) \nabla_x n^i(x)\c n(x)] \\
&\le \sigma'(g(\frac{u(t,x)-u(t,y)}{\e},n(x))) \nabla_{\rho}g(\frac{u(t,x)-u(t,y)}{\e},n(x)) \c  n(x)  \\
&\ \ \ \ + \sup_{t \in \R} \sigma'(t) \e \sum_{1 \le i \le d} |\p_{\xi_i}g(\frac{u(t,x)-u(t,y)}{\e},n(x)) \nabla_x n^i(x)\c n(x)| \\
&\le \sup_{t \in \R} \sigma'(t) \e M_5 |\frac{u(t,x)-u(t,y)}{\e}|^2 \sum_{1 \le i \le d}|\nabla_x n^i(x)| \le M_6 \frac{|x-y|^2}{\e}. \\
\end{split}
\end{eqnarray}

When $x\in \p D$ and $y \in \bar D$ satisfying that $|x-y| \! \ge  \! \frac{\d_2}{M2}$, noting that  $\nabla_x u^i(t,x)\c n(x)=n^i(x)$, by (\ref{3.2}) and (\ref{3.56}), we have
\begin{eqnarray} \nonumber
\begin{split}
&\ \ \ \ \nabla_x f_{\e}(t,x,y) \c n(x)\\
&= \sigma'(g(\frac{u(t,x)-u(t,y)}{\e},n(x)))\sum_{1 \le i \le d}[\p_{\rho_i}g(\frac{u(t,x)-u(t,y)}{\e},n(x)) \nabla_x u^i(t,x)\c n(x)\\
&\ \ \ \ + \e \p_{\xi_i}g(\frac{u(t,x)-u(t,y)}{\e},n(x)) \nabla_x n^i(x)\c n(x)] \\
&\le \sup_{t \in \R} \sigma'(t)[ |\nabla_{\rho}g(\frac{u(t,x)-u(t,y)}{\e},n(x))| +   \e \sum_{1 \le i \le d} | \p_{\xi_i}g(\frac{u(t,x)-u(t,y)}{\e},n(x))| | \nabla_x n^i(x)|] \\
&\les |\frac{u(t,x)-u(t,y)}{\e}| +  \e  |\frac{u(t,x)-u(t,y)}{\e}|^2 \\
&\le  M_2 \frac{|x-y|}{\e}+ M_2^2 \frac{|x-y|^2}{\e}\\
&\le \frac{M_2^2}{\d_2}  \frac{|x-y|^2}{\e} + M_2^2 \frac{|x-y|^2}{\e} \le M_6 \frac{|x-y|^2}{\e}. \\
\end{split}
\end{eqnarray}

\vskip 0.3cm

Finally we prove (\ref{3.54}).
When $x\in \bar D$ and $y \in \p D$ satisfying that $|x-y|< \frac{\d_2}{M_2}$, by (\ref{3.2}) we have $|u(t,x)-u(t,y)|< \d_2$. Combining this with (\ref{3.59}) gives
$$\frac{u(t,x)-u(t,y)}{\e} \c n(y) \ge -\cos \t_0 |\frac{u(t,x)-u(t,y)}{\e}|.$$
 By (\ref{3.2}), (\ref{3.40}), (\ref{3.58}), and the facts that $\nabla_x u^i(t,y)\c n(y)=n^i(y)$ and $\sigma'(t)\ge 0$, we have
\begin{eqnarray} \nonumber
\begin{split}
&\ \ \ \ \nabla_y f_{\e}(t,x,y) \c n(y)\\
&= -\sigma'(g(\frac{u(t,x)-u(t,y)}{\e},n(x)))\sum_{1 \le i \le d}\p_{\rho_i}g(\frac{u(t,x)-u(t,y)}{\e},n(x)) \nabla_x u^i(t,y)\c n(y) \\
&= \sigma'(g(\frac{u(t,x) \! - \! u(t,y)}{\e},n(x))) (\nabla_{\rho}g(\frac{u(t,x) \! - \! u(t,y)}{\e},n(y)) \! - \! \nabla_{\rho}g(\frac{u(t,x) \! - \! u(t,y)}{\e},n(x))) \c n(y) \\
&\ \ \ \ - \sigma'(g(\frac{u(t,x)-u(t,y)}{\e},n(x))) \nabla_{\rho}g(\frac{u(t,x)-u(t,y)}{\e},n(y)) \c n(y)\\
&\le \sup_{t \in \R} \sigma'(t) |\nabla_{\rho}g(\frac{u(t,x)-u(t,y)}{\e},n(y))-\nabla_{\rho}g(\frac{u(t,x)-u(t,y)}{\e},n(x))| \le M_6 \frac{|x-y|^2}{\e}.
\end{split}
\end{eqnarray}

When $x\in \bar D$ and $y \in \p D$ satisfying that $|x-y|\ge \frac{\d_2}{M2}$, by (\ref{3.2}), (\ref{3.56}) and using the fact that $\nabla_x u^i(t,y)\c n(y)=n^i(y)$ we have
\begin{eqnarray} \nonumber
\begin{split}
&\ \ \ \ \nabla_y f_{\e}(t,x,y) \c n(y)\\
&= -\sigma'(g(\frac{u(t,x)-u(t,y)}{\e},n(x)))\sum_{1 \le i \le d}\p_{\rho_i}g(\frac{u(t,x)-u(t,y)}{\e},n(x)) \nabla_x u^i(t,y)\c n(y) \\
&\le \sup_{t \in \R} \sigma'(t) |\nabla_{\rho}g(\frac{u(t,x)-u(t,y)}{\e},n(x))| \\
& \les \frac{|x-y|}{\e}\le \frac{M_2}{\d_2}\frac{|x-y|^2}{\e}.
\end{split}
\end{eqnarray}
\end{proof}

\vskip 0.4cm

Now we recall the stochastic Gronwall's inequality. See Theorem 4 in \cite{Scheutzow} or Lemma 2.8 in \cite{ZhangX}.

\begin{lem}\label{L3.11}
Let $\xi_t$ and $\eta_t$ be two nonnegative c\`{a}dl\`{a}g adapted processes, $A_t$ a continuous nondecreasing adapted process
with $A_0 = 0$, $M_t$ a local martingale with $M_0 = 0$. Suppose that
$$\xi_t \le \eta_t +\int_0^t\xi_sdA_s+M_t,\ \forall \ t>0.$$
Then for any $0 < q < p < 1$ and stopping time $\tau > 0$, we have
$$[E(\xi^*_{\tau})^q]^{1/q}\le (\frac{p}{p-q})^{1/q}(Ee^{pA_{\tau}/(1-p)})^{(1-p)/p}E(\eta^*_{\tau}),$$
where $\xi^*_{\tau}:=\sup_{s\in [0,\tau]}\xi_s$ and $\eta^*_{\tau}:=\sup_{s\in [0,\tau]}\eta_s$.

\end{lem}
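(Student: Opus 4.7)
The plan is to combine a pathwise Gronwall estimate with a Hölder--Lenglart moment argument. By localizing $M$ at $\sigma_n := \inf\{t \ge 0 : |M_t| \ge n\}$ and invoking Fatou's lemma at the end, I may reduce to the case where $M$ is a bounded (hence uniformly integrable) martingale; in particular the stochastic integrals below are honest martingales.

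The key pathwise step introduces the auxiliary local martingale $\Phi_t := \int_0^t e^{-A_s}\, dM_s$. Since $A$ is continuous of bounded variation with $A_0 = 0$, integration by parts on $e^{-A_t} M_t$ yields the identity
\[
M_t + \int_0^t e^{A_t - A_s} M_s \, dA_s \;=\; e^{A_t} \Phi_t.
\]
Writing $Z_t := \xi_t - M_t$, the hypothesis rewrites as $Z_t \le \eta_t + \int_0^t Z_s\, dA_s + \int_0^t M_s\, dA_s$. Applying the deterministic Gronwall inequality to $Z_t$ (with $A$ as the continuous nondecreasing driver), followed by a Fubini swap of the resulting iterated integral and by the identity above, one derives the pathwise bound
\[
\xi_t \;\le\; \eta^*_t \, e^{A_t} + e^{A_t}\, \Phi_t, \qquad t \ge 0.
\]
Taking $\sup_{s \le \tau}$ and using monotonicity of $e^A$ and $\eta^*$ gives $\xi^*_\tau \le e^{A_\tau}(\eta^*_\tau + (\Phi^+)^*_\tau)$, where $(\Phi^+)^*_\tau := \sup_{s \le \tau} \Phi_s^+$.

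For the moment estimate, I raise both sides to the power $q$ and use $(a+b)^q \le a^q + b^q$ (valid since $q \in (0,1)$) to split into an $\eta$-part and a martingale-part. On the $\eta$-part, H\"older's inequality with conjugate exponents $(1/q, 1/(1-q))$ followed by monotonicity of the $L^r$-norm in $r$ (using $q/(1-q) < p/(1-p)$) yields $E[(\eta^*_\tau)^q e^{qA_\tau}] \le E[\eta^*_\tau]^q \cdot E[e^{p A_\tau/(1-p)}]^{q(1-p)/p}$. For the martingale part, the crucial observation is that the nonnegativity $\xi \ge 0$ together with the pathwise bound forces $\Phi_t \ge -\eta^*_t$; hence $\Phi_\tau^- \le \eta^*_\tau$, and the martingale property $E[\Phi_\tau] = 0$ gives $E[\Phi_\tau^+] = E[\Phi_\tau^-] \le E[\eta^*_\tau]$. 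Since $\Phi^+$ is a nonnegative submartingale, Lenglart's domination inequality (with domination exponent $q/p \in (0,1)$) then controls $E[((\Phi^+)^*_\tau)^q e^{qA_\tau}]$ in terms of $E[\Phi_\tau^+]^{q}$ with multiplicative constant $p/(p-q)$, and a further H\"older step with the same conjugate exponents as before produces the exponential weight $E[e^{pA_\tau/(1-p)}]^{q(1-p)/p}$. Assembling the two contributions and taking the $q$-th root gives the stated inequality.

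The main obstacle is the martingale contribution, because the conclusion contains no $\langle M\rangle$-dependence and so Burkholder--Davis--Gundy cannot be used directly. Instead, one must exploit the nonnegativity of $\xi$: combined with the pathwise Gronwall bound it forces $\Phi \ge -\eta^*$, from which $E[\Phi^+_\tau] \le E[\eta^*_\tau]$ follows by the martingale property, \emph{eliminating} any $\langle M\rangle$ appearance. Lenglart's inequality then upgrades this $L^1$-control of $\Phi^+_\tau$ into an $L^q$-control of the running supremum $(\Phi^+)^*_\tau$, producing precisely the constant $p/(p-q)$ in the form stated. The remaining work is careful bookkeeping of the H\"older exponents to upgrade the natural weight $e^{qA_\tau/(1-q)}$ (produced by the conjugate pair $(1/q, 1/(1-q))$) to the sharper $e^{pA_\tau/(1-p)}$ asked for; the pathwise Gronwall--Fubini--integration-by-parts step is routine.
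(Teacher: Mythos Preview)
The paper does not prove this lemma at all: it is simply stated with a reference to Theorem~4 in Scheutzow and Lemma~2.8 in Zhang--Zhao. Your outline reconstructs exactly Scheutzow's argument, and the two genuinely nontrivial steps are correct. The pathwise Gronwall combined with the integration-by-parts identity $M_t+\int_0^t e^{A_t-A_s}M_s\,dA_s=e^{A_t}\Phi_t$ does yield $\xi_t\le e^{A_t}(\eta^*_t+\Phi_t)$, and the observation that $\xi\ge 0$ forces $\Phi_t\ge -\eta^*_t$, whence $E[\Phi_\tau^+]=E[\Phi_\tau^-]\le E[\eta^*_\tau]$ by the (localized) martingale property, is precisely the mechanism that eliminates any $\langle M\rangle$-term. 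You have identified the heart of the proof correctly.

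The one place your sketch is loose is the final moment bookkeeping. Splitting $(\eta^*_\tau+(\Phi^+)^*_\tau)^q$ by subadditivity and then treating the two pieces separately with ``Lenglart with exponent $q/p$'' will produce a bound of the right shape but not the stated sharp constant $(\tfrac{p}{p-q})^{1/q}$: the subadditive split already costs a factor, and Lenglart's inequality in its standard form carries a constant $\tfrac{2-r}{1-r}$ rather than $\tfrac{1}{1-r}$. In Scheutzow's actual proof the constant comes out cleanly because one avoids the split: one uses the one-sided tail bound $P(\sup_{s\le\tau}\Phi_s>\lambda)\le E[\eta^*_\tau]/\lambda$ (from optional stopping at the first upcrossing of $\lambda$, using $\Phi^-\le\eta^*$) to get $E[(\sup_{s\le\tau}\Phi_s^+)^p]\le (1-p)^{-1}(E[\eta^*_\tau])^p$ directly, and then applies a single H\"older step to $e^{qA_\tau}(\eta^*_\tau+(\Phi^+)^*_\tau)^q$. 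If you only need the inequality up to a constant depending on $(p,q)$ your sketch is fine; for the exact constant, drop the subadditive split and follow the tail-integration route.
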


\vskip 0.4cm

Using the Krylov's estimate established in Lemma 5.1 in \cite{Krylov}, and following the same arguments as in the proof of Lemma 4.1 in \cite{Gyongy}, we have the following estimates:

\begin{lem} \label{L3.12}
Assume $(X_t,L_t)$ and $(\tilde X_t,\tilde L_t)$ are solutions to the reflecting
SDEs (\ref{1.1}) with $E[|L|_T]<\infty$ and $E[|\tilde L|_T]<\infty$ for  $T>0$. 
Then there exists a positive constant $M_8$ depending only on $T$, $E[|L|_T]$, $E[|\tilde L|_T]$ and $\|b \|_{L^{d+1}((0,T)\times D)}$, such that for any $f \in L^{d+1}((0,T)\times D)$,
\begin{eqnarray} \label{3.85}
\begin{split}
E[\int_0^T|f(t,  X_t)|dt]\le M_8 \|f\|_{L^{d+1}((0,T) \times D)}.
\end{split}
\end{eqnarray}
 Moreover, there exists a positive constant $M_9$ depending only on $T$, $E[|L|_T]$, $E[|\tilde L|_T]$ and $\|b \|_{L^{d+1}((0,T)\times D)}$, such that for any $f \in L^{d+1}((0,T)\times \Rd)$ and $\a \in [0,1]$,
\begin{eqnarray} \label{3.86}
\begin{split}
E[\int_0^T|f(t,\a X_t+(1-\a)\tilde X_t)|dt]\le M_9 \|f\|_{L^{d+1}((0,T)\times \Rd)}.
\end{split}
\end{eqnarray}
\end{lem}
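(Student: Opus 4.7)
The plan is to eliminate the singular drift $b$ by Girsanov's theorem, reducing both inequalities to the classical Krylov estimate for a continuous semimartingale of the form ``Brownian motion $+$ bounded-variation process'', and then translate the resulting bound back to the original probability via H\"older's inequality. This is precisely the blueprint of Lemma 4.1 in \cite{Gyongy}, which we follow.

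For (\ref{3.85}), first introduce the Dol\'eans exponential
\[
\rho_T:=\exp\!\left(-\int_0^T b(s,X_s)\cdot dW_s-\tfrac{1}{2}\int_0^T|b(s,X_s)|^2\,ds\right);
\]
since $b$ is bounded on $[0,T]\times D$, $\rho_T$ is a genuine martingale with finite moments of every order under $P$. Under $dQ=\rho_T\,dP$, Girsanov's theorem makes $\tilde W_t:=W_t+\int_0^t b(s,X_s)\,ds$ a $Q$-Brownian motion, so $X_t=x+\tilde W_t+L_t$ on $[0,T]$; its martingale part is a Brownian motion and its bounded-variation part is $L$, whose $Q$-expected total variation is finite by Cauchy-Schwarz. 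Lemma 5.1 in \cite{Krylov} applied under $Q$ then yields
\[
E^Q\!\left[\int_0^T|f(t,X_t)|\,dt\right]\le C\,\|f\|_{L^{d+1}((0,T)\times D)},
\]
and reverting to $P$ via $E^P[\cdot]=E^Q[\rho_T^{-1}\cdot]$ together with Cauchy-Schwarz will give (\ref{3.85}).

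For (\ref{3.86}), observe that the convex combination $Y_t^{\alpha}:=\alpha X_t+(1-\alpha)\tilde X_t$ satisfies
\[
Y_t^{\alpha}=\alpha x+(1-\alpha)\tilde x+W_t+\int_0^t\!\bigl[\alpha b(s,X_s)+(1-\alpha)b(s,\tilde X_s)\bigr]ds+\alpha L_t+(1-\alpha)\tilde L_t,
\]
a semimartingale of exactly the same structure, whose drift is still bounded by $\|b\|_\infty$ and whose finite-variation part has $P$-expected total variation controlled by $E[|L|_T]+E[|\tilde L|_T]$. Repeating the Girsanov-Krylov argument, now on $(0,T)\times\Rd$ (since $Y^{\alpha}$ need not stay in $D$), and keeping track of the $\alpha$-independent bounds, will yield (\ref{3.86}).

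The principal obstacle is to ensure that the final constants $M_8,M_9$ depend only on $\|b\|_{L^{d+1}((0,T)\times D)}$ rather than on $\|b\|_\infty$. The bootstrap/Khasminskii argument of \cite{Gyongy} handles this: iterating Krylov's estimate on truncations of $b$ gives a bound on $E[\exp(\lambda\int_0^T|b(s,X_s)|^2\,ds)]$ purely in terms of $\|b\|_{L^{d+1}}$, which in turn controls every moment of $\rho_T^{\pm 1}$ and hence the constants in the estimates above. This bootstrap is the only nontrivial technical step; the remainder consists of standard applications of Krylov's estimate and H\"older's inequality.
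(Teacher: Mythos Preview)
Your proposal is correct and follows exactly the route the paper indicates: Krylov's semimartingale estimate from \cite{Krylov} together with the bootstrap/Khasminskii argument of Lemma~4.1 in \cite{Gyongy}. The paper itself gives no details beyond citing these two references, so your outline is in fact more explicit than the paper's own treatment.

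One small technical point worth flagging: in your Girsanov step you claim $E^Q[|L|_T]<\infty$ ``by Cauchy--Schwarz'', but that would require $E^P[|L|_T^2]<\infty$, whereas the hypothesis only gives the first moment. The cleanest fix is to skip Girsanov altogether and apply Krylov's Lemma~5.1 directly under $P$ to the semimartingale $X_t=x+W_t+A_t$ with $A_t=\int_0^t b(s,X_s)\,ds+L_t$; the constant then depends on $E^P[|A|_T]\le T\|b\|_\infty+E^P[|L|_T]$, and the Gy\"ongy--Mart\'inez bootstrap upgrades the $\|b\|_\infty$ dependence to $\|b\|_{L^{d+1}}$. This direct route is in fact closer to what \cite{Gyongy} does and avoids any moment issue for $|L|_T$. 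The same remark applies verbatim to $Y^\alpha_t$ in the proof of \eqref{3.86}.
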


\section{Existence and uniqueness }

In this section, we will establish the existence and uniqueness of strong solutions to the reflecting SDEs (\ref{1.1}) with singular coefficients. The existence of a weak solution follows from the Girsanov theorem. The strong solution is obtained by proving the pathwise uniqueness of the solutions.
\vskip 0.3cm

When the drift $b$ vanishes, the solution of equation (\ref{1.1}) is the so called reflecting Brownian motion. The existence and uniqueness of  reflecting Brownian motion $(X_t,L_t)$ is now well known (see e.g. \cite{Hsu}). Then using the Girsanov transformation, we easily obtain the following result.

\begin{prp} \label{P4.1}
For any $x\in \bar D$, there exists a unique weak solution $(X_t,L_t)$ to the reflecting SDEs (\ref{1.1}) with $X_0=x$, Moreover, $E_x[|L|_T]<\infty$.

\end{prp}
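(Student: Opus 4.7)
The plan is to follow the hint and obtain the weak solution by a standard Girsanov transformation starting from reflecting Brownian motion. Let $(X_t, L_t)$ be the reflecting Brownian motion on $\bar D$ with $X_0 = x$ on some filtered probability space $(\Omega, \mathcal F, (\mathcal F_t), P)$, i.e.\ $X_t = x + B_t + L_t$ where $B_t$ is a standard $d$-dimensional Brownian motion under $P$ and $L_t$ is the associated boundary local time. This object is classical (see e.g.\ \cite{Hsu} as cited), and in particular on a $C^3$ bounded domain one has $E_P[|L|_T^p] < \infty$ for every $p \ge 1$.

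Next, since $b$ is bounded on $[0,T] \times D$, define the exponential process
\begin{equation*}
Z_t := \exp\!\Bigl(\int_0^t b(s, X_s) \cdot dB_s - \tfrac{1}{2}\int_0^t |b(s, X_s)|^2\, ds \Bigr),
\end{equation*}
which satisfies Novikov's condition trivially and is therefore a true $P$-martingale on $[0,T]$. Define $Q$ on $\mathcal F_T$ by $dQ/dP = Z_T$. By Girsanov's theorem, $W_t := B_t - \int_0^t b(s, X_s)\, ds$ is a standard Brownian motion under $Q$. Rewriting the identity $X_t = x + B_t + L_t$ gives
\begin{equation*}
X_t = x + W_t + \int_0^t b(s, X_s)\, ds + L_t,
\end{equation*}
while the pathwise relations $|L|_t = \int_0^t I_{\{X_s \in \partial D\}}\, d|L|_s$ and $L_t = \int_0^t n(X_s)\, d|L|_s$ are preserved because $P$ and $Q$ are equivalent. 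Hence $(X_t, L_t)$ together with $W_t$ on $(\Omega, \mathcal F, (\mathcal F_t), Q)$ is a weak solution of \eqref{1.1} starting at $x$.

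For the integrability $E_x[|L|_T] < \infty$, observe that $Z_T \in L^p(P)$ for every $p \ge 1$ (boundedness of $b$ implies all exponential moments of the stochastic integral and its quadratic variation are finite), so by Cauchy--Schwarz
\begin{equation*}
E_Q[|L|_T] = E_P[Z_T\, |L|_T] \le \bigl(E_P[Z_T^2]\bigr)^{1/2}\bigl(E_P[|L|_T^2]\bigr)^{1/2} < \infty.
\end{equation*}

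Finally, for uniqueness in law, let $(\tilde X_t, \tilde L_t)$ be any weak solution on some filtered space with driving Brownian motion $\tilde W$. Applying Girsanov in the reverse direction with density $\tilde Z_T^{-1} = \exp(-\int_0^T b(s, \tilde X_s)\cdot d\tilde W_s - \tfrac12\int_0^T |b(s, \tilde X_s)|^2 ds)$ (again valid because $b$ is bounded), the process $\tilde X_t$ becomes, under the new measure, the reflecting Brownian motion starting from $x$. Since the law of reflecting Brownian motion on $\bar D$ is unique, reversing the change of measure shows that the joint law of $(\tilde X, \tilde L)$ is uniquely determined. The only mildly technical step is checking that the boundary behaviour clauses in \eqref{1.1} (the support property of $|L|$ and the normal direction of $L$) are preserved under the equivalent change of measure, but this is immediate since both are pathwise statements.
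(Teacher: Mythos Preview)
Your proof is correct and follows exactly the approach indicated in the paper: the paper merely states that one starts from reflecting Brownian motion (citing \cite{Hsu}) and applies Girsanov, and your write-up fills in precisely these standard details, including the moment bound on $|L|_T$ via Cauchy--Schwarz and the uniqueness-in-law argument by reversing the change of measure.
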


\vskip 0.4cm
To obtain the existence and uniqueness of strong solutions of the reflecting SDEs, according to the Yamada-Watanabe theorem it is sufficient to prove the pathwise uniqueness of the equation (\ref{1.1}). The rest of this section is devoted to this goal.
\vskip 0.4cm
Using the Krylov's estimate in Lemma \ref{L3.12}, we have the following generalized It\^o's formula:

\begin{lem} \label{L4.1}
Let $F  \in W^{1,2}_{q}((0,T)\times D)$ for some $T>0$ and $q>d+2$. Let $X_t$ be a solution to the reflecting SDEs (\ref{1.1}). Then we have for any $0 \le t\le T$,
\begin{eqnarray} \label{4.2}
\begin{split}
& \ \ \ \ F (t,X_t)\\
&=F (0,X_0)+\int_0^t\p_t F (s,X_s)ds+\sum_{1 \le i \le d}\int_0^t\p_{x_i} F (s,X_s)dW^i_s\\
& \ \ \ \ +\int_0^t \nabla_{x} F (s,X_s) \c b(s,X_s)ds+\int_0^t \nabla_{x} F (s,X_s)\c n(X_s)d|L|_s+\frac{1}{2}\ \int_0^t \Delta_{x}  F (s,X_s )ds.
\end{split}
\end{eqnarray}
\end{lem}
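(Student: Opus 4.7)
The plan is to prove the generalized It\^o formula by mollifying $F$, applying the classical It\^o formula to the smooth approximations, and then passing to the limit using the Krylov estimates of Lemma \ref{L3.12} together with the Sobolev embedding. Since $q>d+2$, the parabolic Sobolev embedding gives $W^{1,2}_q((0,T)\times D) \hookrightarrow C^{\alpha_1,\alpha_2}$ for some $\alpha_1,\alpha_2>0$, so $F$ and $\nabla_x F$ admit continuous representatives up to the boundary. This regularity will handle the boundary local-time term, while Krylov's estimate will handle the $L^q$ interior terms.

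First I would extend $F$ off $D$ by the reflection technique already used in Section 2 (compose with $\varphi$ and $2\varphi-x$ in the tubular neighborhood of $\partial D$), to obtain a compactly supported $\tilde F \in W^{1,2}_q((0,T)\times G')$ agreeing with $F$ on $(0,T)\times D$. Then I would define $F_m := \tilde F * \rho_m$ for a standard space-time mollifier $\rho_m$. Since $\tilde F \in C([0,T]\times \bar D)$ and $\nabla_x \tilde F \in C([0,T]\times \bar D)$ after the Sobolev embedding, we have uniform convergence $F_m \to F$ and $\nabla_x F_m \to \nabla_x F$ on $[0,T]\times \bar D$. Moreover
\[
\|\partial_t F_m - \partial_t F\|_{L^q((0,T)\times D)} + \sum_{i,j}\|\partial_{x_i x_j} F_m - \partial_{x_i x_j} F\|_{L^q((0,T)\times D)} \to 0.
\]

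For each $F_m \in C^{1,2}_b$, the classical It\^o formula applied to $F_m(t,X_t)$, using the decomposition $dX_t = dW_t + b(t,X_t)dt + dL_t$ together with $dL_t = n(X_t)\,d|L|_t$, yields exactly the formula (\ref{4.2}) with $F_m$ in place of $F$. I would then pass to the limit $m\to\infty$ term by term: (i) the boundary term $\int_0^t \nabla_x F_m(s,X_s)\cdot n(X_s)\,d|L|_s$ converges by the uniform convergence of $\nabla_x F_m$ and the finiteness $E[|L|_T]<\infty$ from Proposition \ref{P4.1}; (ii) the initial and terminal terms $F_m(t,X_t)-F_m(0,X_0)$ converge by uniform convergence; (iii) the drift term and the Laplacian term converge in $L^1(P)$ by applying Krylov's estimate (\ref{3.85}) to $|\partial_t(F_m-F)|$, $|\nabla_x(F_m-F)||b|$, and $|\Delta_x(F_m-F)|$, which are all bounded in $L^q$ with $q>d+2>d+1$; (iv) the martingale term converges in $L^2(P)$ by It\^o's isometry combined with the Krylov bound on $|\nabla_x(F_m-F)|^2$, again using $q>d+2$.

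The main obstacle is the Laplacian term $\int_0^t \Delta_x F(s,X_s)\,ds$, since $\Delta_x F$ is only an $L^q$ function and has no pointwise meaning along the trajectory. This is precisely where Lemma \ref{L3.12} is essential: the Krylov-type bound
\[
E\Bigl[\int_0^T |\Delta_x F_m(s,X_s) - \Delta_x F(s,X_s)|\,ds\Bigr] \le M_8 \|\Delta_x F_m - \Delta_x F\|_{L^{d+1}((0,T)\times D)}
\]
(using $L^q \hookrightarrow L^{d+1}$ on bounded domains) guarantees convergence. A small subtlety is that Lemma \ref{L3.12} is stated for two solutions; a single-solution version (or taking $\tilde X = X$) suffices here. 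Once all five terms converge along a subsequence $P$-a.s.\ (via Borel--Cantelli), the identity (\ref{4.2}) passes to the limit and the lemma is established.
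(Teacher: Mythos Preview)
Your proposal is correct and follows essentially the same approach as the paper: approximate $F$ by smooth functions in $W^{1,2}_q$, apply the classical It\^o formula, and pass to the limit using the Sobolev embedding for the $C^{0,1}$ terms (including the boundary local-time term) and the Krylov estimate (\ref{3.85}) for the $\partial_t F$ and $\Delta_x F$ terms. The paper's proof is terser---it simply invokes the existence of an approximating sequence $F_n \to F$ in $W^{1,2}_q((0,T)\times D)$ without detailing the extension and mollification---but your explicit construction via the reflection map of Section~2 is a valid way to produce such a sequence; note also that (\ref{3.85}) in Lemma~\ref{L3.12} is already a single-solution estimate, so your stated subtlety does not arise.
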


\begin{proof}
Since $F \in W^{1,2}_{q}((0,T)\times D)$ and the boundary of $D$ is smooth, there exists a sequence of functions $\{F_n \}_{n\ge 1}  \subset   C_b^{1,2}  ([0,T]\times \Rd)$ such that $F_n $ converges to $F $ in $W^{1,2}_{q}  ((0,T)\times D)$. By the It\^o's formula, we have
\begin{eqnarray}\label{4.3}
\begin{split}
& \ \ \ \ F_n(t,X_t)\\
&=F_n(0,X_0)+\int_0^t\p_t F_n (s,X_s)ds+\sum_{1 \le i \le d}\int_0^t\p_{x_i} F_n (s,X_s)dW^i_s\\
& \ \ \ \ +\int_0^t \nabla_{x} F_n (s,X_s) \c b(s,X_s)ds+\int_0^t \nabla_{x} F_n (s,X_s)\c n(X_s)d|L|_s+\frac{1}{2} \int_0^t  \Delta_{x}  F_n (s,X_s )ds.\\
\end{split}
\end{eqnarray}
Note that $F_n(t,x)$ and $ \nabla_{x} F_n(t,x)$ converges to $F(t,x)$ and $ \nabla_{x} F(t,x)$ uniformly on $[0,T]\times \bar D$ respectively by Sobolev inequality. Combining this with Lemma \ref{L3.12}, letting $n\to \infty$ in (\ref{4.3}), we get (\ref{4.2}).
\end{proof}

\vskip 0.4cm

Recall that the constant $T_1$ was defined in Proposition \ref{P2.2}, and the functions $H$ and $f_{\e}$ were defined in Proposition \ref{P3.5} and (\ref{5.20}) respectively.
For $0\le t \le T_1$, set
\begin{eqnarray} \nonumber
\begin{split}
F_{\e}(t,x,y):=Z_t f_{\e}(t,x,y):=e^{-\lambda[H(t,u(t,x))+H(t,u(t,y))]}f_{\e}(t,x,y),
\end{split}
\end{eqnarray}
where $\e$ and $\l$ are some positive constants, and
%
\begin{eqnarray} \nonumber
\begin{split}
& \ \ \ \ \ M_t\\
&:=- \l \int_0^t Z_s f_{\e}(s,X_s,\tilde X_s) [\sum_{1 \le i \le d}(\p_{x_i} H)(s,u(s,X_s))  \nabla_{x} u^i(s,X_s) \c dW_s\\
&\ \ \ \ \ \ \ \ + \sum_{1 \le i \le d}(\p_{x_i} H)(s,u(s,\tilde X_s))  \nabla_{x} u^i(s,\tilde X_s) \c dW_s ]\\
& \ \ \ \ + \int_0^t Z_s \sum_{1 \le i \le d}\p_{\rho_i}\o (\frac{u(s,X_s)-u(s,\tilde X_s)}{\e},n(X_s))(\nabla_{x}u^i(s,X_s)-\nabla_{x}u^i(s,\tilde X_s)) \c dW_s \\
& \ \ \ \ + \e \int_0^t Z_s \sum_{1 \le i \le d}\p_{\xi_i}\o (\frac{u(s,X_s)-u(s,\tilde X_s)}{\e},n(X_s))\nabla_{x}n^i(X_s) \c dW_s,
\end{split}
\end{eqnarray}
\begin{eqnarray} \nonumber
\begin{split}
& \ \ \ \ \ A^1_t\\
&:=- \l \int_0^t Z_s f_{\e}(s,X_s,\tilde X_s) [(\nabla_{x} H)(s,u(s,X_s)) \c n(X_s)d|L|_s+(\nabla_{x} H)(s,u(s,\tilde X_s))\c n(\tilde X_s)d|\tilde L|_s ]\\
& \ \ \ \ +  \int_0^t Z_s \nabla_{\rho}\o (\frac{u(s,X_s)-u(s,\tilde X_s)}{\e},n(X_s))\c (n(X_s)d|L|_s-n(\tilde X_s)d|\tilde L|_s) \\
& \ \ \ \ + \e \int_0^t Z_s \sum_{1 \le i \le d}\p_{\xi_i}\o (\frac{u(s,X_s)-u(s,\tilde X_s)}{\e},n(X_s)) \nabla_x n^i(X_s)\c n(X_s)d|L|_s,
\end{split}
\end{eqnarray}
\begin{eqnarray} \nonumber
\begin{split}
& \ \ \ \ \ A^2_t\\
&:=- \l \int_0^t Z_s f_{\e}(s,X_s,\tilde X_s) [(\p_s H)(s,u(s,X_s))ds+(\p_s H)(s,u(s,\tilde X_s))ds]\\
& \ \ \ \ - \frac{\l }{2} \int_0^t Z_s f_{\e}(s,X_s,\tilde X_s)\sum_{1 \le i,j \le d} (\p_{x_j} \p_{x_i} H)(s,u(s,X_s))  \nabla_{x}u^i(s,X_s)\c \nabla_{x}u^j(s,X_s)ds\\
& \ \ \ \ - \frac{\l }{2} \int_0^t Z_s f_{\e}(s,X_s,\tilde X_s)\sum_{1 \le i,j \le d} (\p_{x_j} \p_{x_i} H)(s,u(s,\tilde X_s))  \nabla_{x}u^i(s,\tilde X_s)\c \nabla_{x}u^j(s,\tilde X_s)ds\\
& \ \ \ \ + \e \int_0^t Z_s \sum_{1 \le i \le d}\p_{\xi_i}\o (\frac{u(s,X_s)-u(s,\tilde X_s)}{\e},n(X_s))[\nabla_x n^i(X_s) \c b(s,X_s)+\frac{1}{2}\Delta_x n^i(X_s)]ds\\
& \ \ \ \ + \frac{1}{2 \e} \int_0^t Z_s  \sum_{1 \le i,j \le d}\p_{\rho_j}\p_{\rho_i}\o (\frac{u(s,X_s)-u(s,\tilde X_s)}{\e},n(X_s)) \\
& \ \ \ \  \ \ \ \ \times (\nabla_x u^i(s,X_s)-\nabla_x u^i(s,\tilde X_s)) \c (\nabla_x u^j(s,X_s)-\nabla_x u^j(s,\tilde X_s))ds \\
& \ \ \ \ +  \int_0^t Z_s \sum_{1 \le i,j \le d}\p_{\xi_j}\p_{\rho_i}\o (\frac{u(s,X_s)-u(s,\tilde X_s)}{\e},n(X_s)) \\
& \ \ \ \  \ \ \ \ \times (\nabla_x u^i(s,X_s)-\nabla_x u^i(s,\tilde X_s)) \c \nabla_x n^j(X_s)ds \\
& \ \ \ \ + \frac{\e}{2}  \int_0^t Z_s  \sum_{1 \le i,j \le d}\p_{\xi_j}\p_{\xi_i}\o (\frac{u(s,X_s)-u(s,\tilde X_s)}{\e},n(X_s))\nabla_x n^i(X_s) \c \nabla_x n^j(X_s)ds \\
& \ \ \ \ + \! \frac{\l^2}{2} \!  \!  \int_0^t \!  \! Z_s f_{\e}(s,X_s,\tilde X_s)  \! \!  \sum_{1 \le i,j \le d}  \!  \! (\p_{x_i}  \! H)(s,u(s,X_s)) (\p_{x_j}  \! H)(s,u(s,X_s)) \nabla_{x}  \! u^i(s,X_s)  \! \c  \! \nabla_{x}  \! u^j(s,X_s)ds \\
& \ \ \ \ + \! \l^2  \!  \! \int_0^t  \!  \! Z_s f_{\e}(s,X_s,\tilde X_s) \!  \!  \sum_{1 \le i,j \le d}  \!  \! (\p_{x_i}  \! H)(s,u(s,X_s)) (\p_{x_j}  \! H)(s,u(s,\tilde X_s)) \nabla_{x}  \! u^i(s,X_s)  \! \c  \! \nabla_{x}  \! u^j(s,\tilde X_s)ds \\
& \ \ \ \ + \! \frac{\l^2}{2}  \!  \! \int_0^t  \!  \! Z_s f_{\e}(s,X_s,\tilde X_s)  \!  \! \sum_{1 \le i,j \le d}  \!  \! (\p_{x_i}  \! H)(s,u(s,\tilde X_s)) (\p_{x_j}  \! H)(s,u(s,\tilde X_s))  \nabla_{x}  \! u^i(s,\tilde X_s)  \! \c  \! \nabla_{x}  \! u^j(s,\tilde X_s)ds \\
& \ \ \ \ -\l \int_0^t Z_s  \sum_{1 \le i,j  \le d}(\p_{x_i} H)(s,u(s,X_s)) \p_{\rho_j}\o (\frac{u(s,X_s)-u(s,\tilde X_s)}{\e},n(X_s))\\
& \ \ \ \  \ \ \ \ \times  \nabla_{x} u^i(s,X_s) \c (\nabla_x u^j(s,X_s)-\nabla_x u^j(s,\tilde X_s)) ds \\
& \ \ \ \ - \! \l \e   \!  \! \int_0^t  \!  \! Z_s \sum_{1 \le i,j \le d} (\p_{x_i}  \! H)(s,u(s,X_s)) \p_{\xi_j} \! \o (\frac{u(s,X_s) \! - \! u(s,\tilde X_s)}{\e},n(X_s)) \nabla_{x}  u^i(s,X_s)  \! \c \!  \nabla_x  n^j(X_s) ds\\
& \ \ \ \ -\l \int_0^t Z_s  \sum_{1 \le i,j  \le d}(\p_{x_i} H)(s,u(s,\tilde X_s)) \p_{\rho_j}\o (\frac{u(s,X_s)-u(s,\tilde X_s)}{\e},n(X_s))\\
& \ \ \ \  \ \ \ \ \times \nabla_{x} u^i(s,\tilde X_s) \c (\nabla_x u^j(s,X_s)-\nabla_x u^j(s,\tilde X_s)) ds \\
& \ \ \ \ - \! \l \e  \!  \!  \int_0^t \!  \!  Z_s  \!  \! \sum_{1 \le i,j \le d} \!  \!  (\p_{x_i} H)(s,u(s,\tilde X_s)) \p_{\xi_j}\o (\frac{u(s,X_s) \! - \! u(s,\tilde X_s)}{\e},n(X_s)) \nabla_{x} u^i(s,\tilde X_s)  \! \c  \! \nabla_x n^j(X_s) ds.
\end{split}
\end{eqnarray}

\newpage

\begin{thm} \label{T4.1}
Assume $(X_t,L_t)$ and $(\tilde X_t,\tilde L_t)$ are two  solutions to the reflecting SDEs (\ref{1.1}).
Then we have for any $0 \le t\le T_1$,

\begin{eqnarray} \label{4.1}
\begin{split}
& \ \ \ \ F_{\e}(t,X_t,\tilde X_t)=F_{\e}(0,X_0,\tilde X_0)+M_t+A^1_t+A^2_t.\\
\end{split}
\end{eqnarray}

\end{thm}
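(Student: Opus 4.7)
The plan is to derive \eqref{4.1} by viewing $(X_t,\tilde X_t)$ as a semimartingale taking values in $\bar D\times\bar D$, both driven by the same Brownian motion $W$, and applying a generalized It\^o formula to $F_\varepsilon(t,X_t,\tilde X_t)=Z_tf_\varepsilon(t,X_t,\tilde X_t)$. Since $\omega\in C^2(\R^{2d})$, $n\in C_0^2(\R^d)$ and $H\in W^{1,2}_{2d+2}(\tilde D)\cap C^{0,1}_b$, while $u$ has only weak second-order derivatives in $x$ and a weak first-order derivative in $t$, the classical It\^o formula does not apply directly, and I would instead work with a smooth approximation and pass to the limit at the end.

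For the regularity input, I would first invoke Lemma \ref{L3.5} to conclude that $H(t,u(t,x))\in W^{1,2}_{2d+2}((0,T_1)\times D)$ together with the weak chain rule, so that the generalized It\^o formula (Lemma \ref{L4.1}) applies to $H(t,u(t,X_t))$ and $H(t,u(t,\tilde X_t))$ individually. Combined with the $C^2$ regularity of $\omega$ in both arguments and of $n$, this gives enough room to approximate. Specifically, I would replace $u$ by the smooth functions $u_n$ from Section~2 (and analogously form $F_{\varepsilon,n}$), for which $F_{\varepsilon,n}\in C^{1,2,2}([0,T_1]\times\R^d\times\R^d)$ and the classical It\^o formula is available.

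For smooth $F_{\varepsilon,n}$ the It\^o expansion of $F_{\varepsilon,n}(t,X_t,\tilde X_t)$ produces the usual ingredients: a $\partial_t$ term, $\nabla_x F_{\varepsilon,n}\cdot dX_t$ and $\nabla_y F_{\varepsilon,n}\cdot d\tilde X_t$, the second-order terms $\tfrac12\sum\partial_{x_ix_j}F_{\varepsilon,n}\,d\langle X^i,X^j\rangle$, their symmetric counterparts in $y$, and the crucial cross term $\sum\partial_{x_iy_j}F_{\varepsilon,n}\,d\langle X^i,\tilde X^j\rangle$. Since $X$ and $\tilde X$ are driven by the same Brownian motion, $d\langle X^i,\tilde X^j\rangle_t=\delta_{ij}dt$ (the $b$ and $L$ contributions have bounded variation and drop out), and it is precisely this identity that produces the combinations $(\nabla_x u^i(s,X_s)-\nabla_x u^i(s,\tilde X_s))\cdot(\nabla_x u^j(s,X_s)-\nabla_x u^j(s,\tilde X_s))$ appearing inside $A^2_t$. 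Substituting $dX_s=dW_s+b(s,X_s)ds+n(X_s)d|L|_s$ (and similarly for $\tilde X$), then distributing derivatives of $Z$ and $f_\varepsilon$ via the product/chain rule, sorts every contribution into $M_t$ (the $dW$ integrals), $A^1_t$ (the $d|L|_s$ and $d|\tilde L|_s$ integrals) and $A^2_t$ (the $ds$ integrals), matching exactly the expressions displayed before the theorem.

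The main obstacle is the limit $n\to\infty$. To push $F_{\varepsilon,n}\to F_\varepsilon$ and match the identity term by term I would rely on (a) the uniform convergence of $u_n,\nabla_x u_n$ to $u,\nabla_x u$ on $[0,T_1]\times\bar D$ from Lemma \ref{L3.1.1}, (b) the $L^{2d+2}$ convergence of $\partial_t u_n$ and $\partial_{x_i}\partial_{x_j}u_n$ coming from \eqref{3.77}, and (c) Krylov's estimate \eqref{3.85}--\eqref{3.86}, which controls $E\!\int_0^T|f(s,X_s)|ds$ and $E\!\int_0^T|f(s,\alpha X_s+(1-\alpha)\tilde X_s)|ds$ by $\|f\|_{L^{d+1}}$; the latter handles the $ds$ integrands in $A^2_t$ that involve $\partial_t u$ and $D^2_x u$ (in particular the Hessian-type terms written as a difference along the segment joining $X_s$ and $\tilde X_s$), while the BDG inequality together with $L^{2d+2}$-bounds on $\nabla_x u_n$ takes care of the stochastic integrals in $M_t$. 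The $d|L|_s$ and $d|\tilde L|_s$ integrals in $A^1_t$ cause no trouble because the integrands converge uniformly on $\bar D\times\bar D$ and $E[|L|_{T_1}]+E[|\tilde L|_{T_1}]<\infty$ by Proposition \ref{P4.1}. The bulk of the technical work is the bookkeeping needed to verify that the cross-variation contributions assemble into exactly the displayed form of $A^2_t$.
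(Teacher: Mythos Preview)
Your overall strategy is workable but differs from the paper's and has two loose ends worth flagging.

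First, the paper does \emph{not} approximate $u$ here. Instead it applies the generalized It\^o formula (Lemma~\ref{L4.1}) directly to $u(t,X_t)$; because $u$ solves the PDE~\eqref{1}, the terms $\partial_t u+\tfrac12\Delta_x u+b\cdot\nabla_x u$ cancel and one gets the clean representation
\[
u(t,X_t)=u(0,X_0)+\int_0^t\nabla_x u(s,X_s)\cdot dW_s+\int_0^t n(X_s)\,d|L|_s,
\]
with no $b$ drift. Since $\omega\in C^2(\R^{2d})$, the \emph{classical} It\^o formula then applies to $f_\varepsilon(t,X_t,\tilde X_t)=\varepsilon\,\omega\bigl(\tfrac{u(t,X_t)-u(t,\tilde X_t)}{\varepsilon},n(X_t)\bigr)$ as a $C^2$ function of the semimartingales $u(t,X_t),u(t,\tilde X_t),n(X_t)$. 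For $Z_t$ the paper proceeds exactly as you say (Lemma~\ref{L3.5} plus Lemma~\ref{L4.1} for $\tilde H(t,x)=H(t,u(t,x))$, again with the PDE cancellation), and finishes by integration by parts. So no approximation or limit is needed at all; what buys this is precisely the PDE~\eqref{1}.

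Second, two points in your write-up would need repair. Your claim that $F_{\varepsilon,n}\in C^{1,2,2}$ is false: replacing $u$ by $u_n$ does nothing for $H$, which is only $C^{0,1}_b\cap W^{1,2}_{2d+2}$, so $Z$ built from $H(t,u_n(t,\cdot))$ is not $C^{1,2}$. You would either have to restrict the approximation to the $f_\varepsilon$ factor (where it does give $f_{\varepsilon,n}\in C^{1,2,2}$) and handle $Z_t$ via Lemma~\ref{L4.1} as you already indicate, or additionally mollify $H$. More importantly, you never invoke the PDE~\eqref{1}. Without it, a direct expansion of $F_{\varepsilon,n}(t,X_t,\tilde X_t)$ produces extra $ds$ terms of the form $\partial_{\rho_i}\omega\cdot\bigl(\partial_s u_n+\tfrac12\Delta_x u_n+b\cdot\nabla_x u_n\bigr)$ and their analogues inside $Z$, which are absent from the displayed $A^2_t$; matching $A^2_t$ requires recognizing that these vanish (exactly for $u$, only approximately for $u_n$ since $u_n$ solves the PDE with drift $b_n$, so you would also need $b_n\to b$ in $L^{d+1}$ and Krylov's estimate to kill the discrepancy).
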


\begin{proof}
Assume $(X_t,L_t)$ and $(\tilde X_t,\tilde L_t)$ are two solutions to reflecting SDEs (\ref{1.1}). Applying Lemma \ref{L4.1}, we obtain
\begin{eqnarray} \nonumber
\begin{split}
u(t,X_t)=u(0,X_0)+\int_0^t \nabla_xu(s,X_s) \c dW_s+\int_0^tn(X_s)d|L|_s, \\
u(t,\tilde X_t)=u(0,\tilde X_0)+\int_0^t \nabla_xu(s,\tilde X_s) \c dW_s+\int_0^tn(\tilde X_s)d|\tilde L|_s. \\
\end{split}
\end{eqnarray}

Since $\omega  \in C^2(\R^{2d})$, using the It\^o's formula 
we have

\begin{eqnarray} \label{3.50}
\begin{split}
& \ \ \ \ f_{\e}(t,X_t,\tilde X_t)\\
&=f_{\e}(0,X_0,\tilde X_0) \! +  \!   \int_0^t  \!  \! \sum_{1 \le i \le d} \!  \! \p_{\rho_i}\o (\frac{u(s,X_s) \! - \! u(s,\tilde X_s)}{\e},n(X_s)) (\nabla_x u^i(s,X_s) \! - \! \nabla_x u^i(s,\tilde X_s)) \! \c  \! dW_s \\
& \ \ \ \ +  \int_0^t \sum_{1 \le i \le d}\p_{\rho_i}\o (\frac{u(s,X_s)-u(s,\tilde X_s)}{\e},n(X_s)) (n^i(X_s)d|L|_s-n^i(\tilde X_s)d|\tilde L|_s) \\
& \ \ \ \ + \e \!  \int_0^t  \! \sum_{1 \le i  \le d} \! \p_{\xi_i}\o (\frac{u(s,X_s) \! - \! u(s,\tilde X_s)}{\e},n(X_s))(\nabla_x n^i(X_s) \c dW_s \! + \!  \nabla_x n^i(X_s)  \! \c  \! b(s,X_s)ds)\\
& \ \ \ \ + \e  \! \int_0^t  \! \sum_{1 \le i  \le d} \! \p_{\xi_i}\o (\frac{u(s,X_s)-u(s,\tilde X_s)}{\e},n(X_s)) (\nabla_x n^i(X_s) \c n(X_s)d|L|_s+\frac{1}{2}\Delta_x n^i(X_s)ds) \\
& \ \ \ \ + \frac{1}{2\e} \int_0^t \sum_{1 \le i,j \le d}\p_{\rho_j}\p_{\rho_i}\o (\frac{u(s,X_s)-u(s,\tilde X_s)}{\e},n(X_s)) \\
&\ \ \ \ \ \ \ \ \times (\nabla_x u^i(s,X_s)-\nabla_x u^i(s,\tilde X_s)) \c (\nabla_x u^j(s,X_s)-\nabla_x u^j(s,\tilde X_s))ds \\
& \ \ \ \ + \!  \int_0^t  \! \sum_{1 \le i,j \le d} \! \p_{\xi_j}\p_{\rho_i}\o (\frac{u(s,X_s) \! - \! u(s,\tilde X_s)}{\e},n(X_s)) (\nabla_x u^i(s,X_s) \! - \! \nabla_x u^i(s,\tilde X_s))  \! \c \!  \nabla_x n^j(X_s)ds \\
& \ \ \ \ + \frac{\e}{2} \int_0^t \sum_{1 \le i,j \le d}\p_{\xi_j}\p_{\xi_i}\o (\frac{u(s,X_s)-u(s,\tilde X_s)}{\e},n(X_s))\nabla_x n^i(X_s) \c \nabla_x n^j(X_s)ds. \\
\end{split}
\end{eqnarray}

On the other hand, since $\tilde H(t,x):=H(t,u(t,x))\in W^{1,2}_{2d+2}((0,T_1) \times D)$ by Lemma \ref{L3.5},  we can apply (\ref{1}) and Lemma \ref{L4.1}  to get
\begin{eqnarray} \label{3.51}
\begin{split}
& \ \ \ \ H(t,u(t,X_t))\\
&=H(0,u(0,X_0))+ \int_0^t \p_s \tilde H(s, X_s)ds+ \int_0^t \nabla_x \tilde H(s,X_s) \c dX_s+ \frac{1}{2} \int_0^t \Delta_x \tilde H(s, X_s)ds\\
&=H(0,u(0,X_0))+ \int_0^t [(\p_s H)(s,u(s,X_s))+\sum_{1 \le i \le d} (\p_{x_i} H) (s,u(s,X_s)) \p_s u^i(s,X_s)]ds\\
& \ \ \ \ + \int_0^t \sum_{1 \le i \le d} (\p_{x_i} H)(s,u(s,X_s)) [ \nabla_{x} u^i(s,X_s) \c dW_s+\nabla_{x} u^i(s,X_s) \c b(s,X_s)ds\\
& \ \ \ \ \ \ \ \ + \nabla_{x} u^i(s,X_s) \c n(X_s)d|L|_s]\\
& \ \ \ \ + \frac{1}{2} \int_0^t [\sum_{1 \le i,j \le d} (\p_{x_j} \p_{x_i} H)(s,u(s,X_s))  \nabla_{x}u^i(s,X_s)\c \nabla_{x}u^j(s,X_s)\\
& \ \ \ \ \ \ \ \ +\sum_{1 \le i \le d} (\p_{x_i} H)(s,u(s,X_s))  \Delta_{x}u^i(s,X_s)]ds\\
&=H(0,u(0,X_0))+ \int_0^t  \! (\p_s H)(s,u(s,X_s))ds+ \int_0^t  \! \sum_{1 \le i \le d} \! (\p_{x_i} H)(s,u(s,X_s)) [ \nabla_{x} u^i(s,X_s)  \! \c  \! dW_s\\
& \ \ \ \ + n^i(X_s)d|L|_s]+ \frac{1}{2} \int_0^t \sum_{1 \le i,j \le d} (\p_{x_j} \p_{x_i} H)(s,u(s,X_s))  \nabla_{x}u^i(s,X_s)\c \nabla_{x}u^j(s,X_s)ds,\\
\end{split}
\end{eqnarray}
and similarly,
\begin{eqnarray} \label{3.52}
\begin{split}
& \ \ \ \ H(t,u(t,\tilde X_t))\\
&=H(0,u(0,\tilde X_0))+ \int_0^t \!  (\p_s H)(s,u(s,\tilde X_s))ds+ \int_0^t \!  \sum_{1 \le i \le d} \! (\p_{x_i} H)(s,u(s,\tilde X_s)) [ \nabla_{x} u^i(s,\tilde X_s)  \! \c  \! dW_s\\
& \ \ \ \ + n^i(\tilde X_s)d|\tilde L|_s]+ \frac{1}{2} \int_0^t \sum_{1 \le i,j \le d} (\p_{x_j} \p_{x_i} H)(s,u(s,\tilde X_s))  \nabla_{x}u^i(s,\tilde X_s)\c \nabla_{x}u^j(s,\tilde X_s)ds.\\
\end{split}
\end{eqnarray}

By (\ref{3.50}), (\ref{3.51}), (\ref{3.52}) and the integration by parts, we easily deduce (\ref{4.1}).

\end{proof}

Now we are ready to prove the main result of  the paper:

\begin{thm}
For any $x \in \bar D$, the reflecting SDEs (\ref{1.1}) has a unique strong solution $(X_t,L_t)$ with $X_0=x$.
\end{thm}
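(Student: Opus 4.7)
The plan is to establish pathwise uniqueness of (\ref{1.1}) on $[0,T_1]$, iterate to any finite horizon, and then combine with the weak existence of Proposition \ref{P4.1} via the Yamada--Watanabe theorem. Let $(X_t,L_t)$ and $(\tilde X_t,\tilde L_t)$ be two solutions sharing the same initial point $x\in\bar D$ and Brownian motion. Theorem \ref{T4.1} supplies the decomposition
\begin{equation*}
F_\e(t,X_t,\tilde X_t)=F_\e(0,x,x)+M_t+A^1_t+A^2_t,
\end{equation*}
with $F_\e(0,x,x)=\e\,\o(0,n(x))=\e$ by (\ref{5.15}). I will show (i) $A^1_t\le 0$ for $\l$ chosen large, and (ii) $A^2_t\le\int_0^t F_\e(s,X_s,\tilde X_s)\,dA_s$ for some nondecreasing adapted $A$ with $E[A_{T_1}]<\infty$. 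Stochastic Gronwall (Lemma \ref{L3.11}) then collapses $F_\e$ to order $\e$; the lower bound $F_\e\ge c_0 M_7|X_t-\tilde X_t|^2/\e$ from (\ref{3.36}), with $c_0:=e^{-2\l\|H\|_\infty}$, forces $X\equiv\tilde X$ on $[0,T_1]$ upon $\e\downarrow 0$.

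For (i), the decisive point is that on $\{X_s\in\partial D\}$ one has $\g(s,u(s,X_s))=n(X_s)$ (since $\phi\equiv 1$ near the boundary), whence (\ref{3.84}) gives $\nabla_x H(s,u(s,X_s))\c n(X_s)\ge 1$. Combined with $f_\e\ge M_7|X_s-\tilde X_s|^2/\e$ from (\ref{3.36}), the $H$-driven local-time contribution is at most $-\l M_7\int_0^t Z_s|X_s-\tilde X_s|^2/\e\,d|L|_s$. The remaining local-time terms in $A^1_t$ are estimated through (\ref{3.53}), (\ref{3.54}), and (\ref{3.61}) to be bounded by $c\int_0^t Z_s|X_s-\tilde X_s|^2/\e\,d|L|_s$; taking $\l>c/M_7$ renders the $d|L|_s$-density non-positive, and analogously for $d|\tilde L|_s$.

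For (ii), the terms of $A^2_t$ that already carry $f_\e$ as a factor (those proportional to $\p_s H$, $\p_{x_j}\p_{x_i}H$, or products of $\nabla H$) are absorbed into $\int F_\e\,dA_s$ with an $L^{d+1}$-integrand, using $H\in C^{0,1}_b\cap W^{1,2}_{2d+2}$ together with the bi-Lipschitz bound (\ref{3.2}) to transfer $L^{d+1}$ estimates from $u(s,\cdot)$-coordinates back to $D$. The delicate piece is
\begin{equation*}
\tfrac{1}{2\e}\int_0^t Z_s\,\p_{\rho_j}\p_{\rho_i}\o\bigl(\tfrac{u(s,X_s)-u(s,\tilde X_s)}{\e},n(X_s)\bigr)(\nabla_x u^i(s,X_s)-\nabla_x u^i(s,\tilde X_s))\c(\nabla_x u^j(s,X_s)-\nabla_x u^j(s,\tilde X_s))\,ds,
\end{equation*}
because $\nabla_x u$ is merely $W^{1,2d+2}$ in space, hence not Lipschitz. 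I plan to invoke the Hedberg-type pointwise bound
\begin{equation*}
|\nabla_x u^i(s,X_s)-\nabla_x u^i(s,\tilde X_s)|\le c|X_s-\tilde X_s|\bigl[\mathcal M\|\nabla^2_x u\|(s,X_s)+\mathcal M\|\nabla^2_x u\|(s,\tilde X_s)\bigr],
\end{equation*}
which, together with $|\p_{\rho_j}\p_{\rho_i}\o|\le M_6$ from (\ref{3.38}) and (\ref{3.36}), dominates the term by $\int_0^t F_\e\,dA_s$ with $dA_s\propto\bigl[\mathcal M\|\nabla^2_x u\|^2(s,X_s)+\mathcal M\|\nabla^2_x u\|^2(s,\tilde X_s)+\psi(s)\bigr]ds$, with $\psi$ collecting the bounded contributions. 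The $L^p$-boundedness of $\mathcal M$ and the Krylov estimates (\ref{3.85})--(\ref{3.86}) guarantee $E[A_{T_1}]<\infty$.

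To close the loop, localize by $\tau_M:=T_1\wedge\inf\{t:A_t\ge M\}$, giving $E[e^{pA_{\tau_M}/(1-p)}]\le e^{pM/(1-p)}$; Lemma \ref{L3.11} then yields $[E\sup_{t\le\tau_M}F_\e(t,X_t,\tilde X_t)^q]^{1/q}\le C_M\e$. Sending $\e\downarrow 0$ and then $M\to\infty$ produces $X\equiv\tilde X$ on $[0,T_1]$. Iterating on $[T_1,2T_1],\ldots$ (the construction of $H$, $\o$ and $f_\e$ transfers to each slab because $T_1$ depends only on $D$ and $b$) extends pathwise uniqueness to every $[0,T]$; Yamada--Watanabe combined with Proposition \ref{P4.1} then delivers the unique strong solution. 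The principal obstacle is the absorption of the $\nabla_x u$-difference term in step (ii); all other contributions in $A^1_t$ and $A^2_t$ are tailor-made by the constructions of $H$, $\o$, and $f_\e$ developed in Sections 2--5.
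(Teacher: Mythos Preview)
Your proposal is correct and follows the same architecture as the paper: weak existence plus pathwise uniqueness via Yamada--Watanabe, the It\^o decomposition of $F_\e$ from Theorem \ref{T4.1}, the choice of $\l$ to make $A^1_t\le 0$, absorption of $A^2_t$ into a stochastic Gronwall inequality, and iteration in slabs of length $T_1$. Your observation that $f_\e\ge\e$ (since $\sigma\ge 1$) lets you package everything as $A^2_t\le\int_0^t F_\e\,dA_s$; the paper instead splits via (\ref{3.36}) into a pure-$\e$ part (which becomes $\eta_t$) and a $|X_s-\tilde X_s|^2/\e$ part (which becomes $\int\xi_s\,dC_s$), but the two formulations are equivalent.

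The one genuine technical deviation is your treatment of the $\frac{1}{2\e}\p_{\rho_j}\p_{\rho_i}\o\cdot(\nabla_x u^i(s,X_s)-\nabla_x u^i(s,\tilde X_s))\cdot(\cdots)$ term. You invoke the Hedberg--Haj\l asz pointwise maximal inequality and then the single-process Krylov estimate (\ref{3.85}); the paper instead extends $u$ to $v\in W^{1,2}_{2d+2}((0,T_1)\times\Rd)$, approximates by smooth $v_n$, applies the mean-value theorem along the segment $\a X_s+(1-\a)\tilde X_s$, and uses the two-process Krylov estimate (\ref{3.86}) to control $\int_0^1|\p_{x_j}\p_{x_i}v(s,\a X_s+(1-\a)\tilde X_s)|^2\,d\a$. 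Both routes are standard in the Zvonkin literature; yours requires a Sobolev extension of $\nabla_x u$ to $\Rd$ (which you should make explicit, exactly as the paper does with $v$) and the $L^{d+1}$-boundedness of the spatial Hardy--Littlewood maximal operator, while the paper's route avoids maximal functions at the price of the auxiliary estimate (\ref{3.86}).
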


\begin{proof}
By Proposition \ref{P4.1}, we know that the reflecting SDEs (\ref{1.1}) has a unique weak solution. Hence
by the Yamada-Watanabe theorem, it is sufficient to prove the pathwise uniqueness of the reflecting SDEs (\ref{1.1}).

Assume $(X_t,L_t)$ and $(\tilde X_t,\tilde L_t)$ are two solutions to reflecting SDEs (\ref{1.1}) with $X_0=\tilde X_0=x$. Let $A^1_t$, $A_t^2$ be defined as in (\ref{4.1}).
Then by Proposition \ref{P3.5}, (\ref{3.36}), (\ref{3.53}) and (\ref{3.54}), we have for $t \in [0, T_1]$,
\begin{eqnarray} \label{2-20-2}
\begin{split}
& \ \ \ \ A^1_t\\
&=- \l \int_0^t  \! Z_s f_{\e}(s,X_s,\tilde X_s) [\nabla_{x}H(s,u(s,X_s)) \c n(X_s)d|L|_s \! + \!  \nabla_{x} H(s,u(s,\tilde X_s))\c n(\tilde X_s)d|\tilde L|_s ]\\
& \ \ \ \ + \int_0^t Z_s \nabla_xf_{\e}(s,X_s,\tilde X_s) \c n(X_s) d|L|_s+\int_0^t Z_s \nabla_yf_{\e}(s,X_s,\tilde X_s) \c n(\tilde X_s) d|\tilde L|_s \\
&\le \int_0^t Z_s (M_6\frac{|X_s-\tilde X_s|^2}{\e}- \l M_7\frac{|X_s-\tilde X_s|^2}{\e})  d|L|_s\\
& \ \ \ \ + \int_0^t Z_s  (M_6\frac{|X_s-\tilde X_s|^2}{\e}- \l M_7\frac{|X_s-\tilde X_s|^2}{\e}) d|\tilde L|_s.\\
\end{split}
\end{eqnarray}
Hence we can take $\l:=\frac{M_6}{M_7}$ so that $A^1_t \le 0$ for any $\e>0$.

For the term $A^2_t$, note that $n   \in    C^2_0( \Rd), \ H  \in    C^{0,1}_b([0,T_1]\times \Rd)$,
$u   \in    C^{0,1}_b([0,T_1]\times G')$. By (\ref{3.2}), (\ref{3.61}), (\ref{3.38}), (\ref{3.36}) and the H\"{o}lder inequality, we have for $t \in [0, T_1]$,
\begin{eqnarray} \label{2-20-3}
\begin{split}
& \ \ \ \ A^2_t\\
&\les   \l \int_0^t  (\e+\frac{|X_s-\tilde X_s|^2}{\e}) [|(\p_s H)(s,u(s,X_s))|+|(\p_s H)(s,u(s,\tilde X_s))|]ds\\
& \ \ \ \ + \l \sum_{1 \le i,j \le d}\int_0^t  (\e+\frac{|X_s-\tilde X_s|^2}{\e}) [|(\p_{x_j} \p_{x_i} H)(s,u(s,X_s))|+|(\p_{x_j} \p_{x_i} H)(s,u(s,\tilde X_s))|]ds\\
& \ \ \ \ + \e \int_0^t  |\frac{u(s,X_s)-u(s,\tilde X_s)}{\e}|^2 [|b(s,X_s)|+\frac{1}{2}|\Delta_x n^i(X_s)|]ds\\
& \ \ \ \ + \frac{1}{ \e} \int_0^t   \|\nabla_x u(s,X_s)-\nabla_x u(s,\tilde X_s)\|^2ds \\
& \ \ \ \ +  \int_0^t  |\frac{u(s,X_s)-u(s,\tilde X_s)}{\e}| \|\nabla_x u(s,X_s)-\nabla_x u(s,\tilde X_s)\|ds \\
& \ \ \ \ + \e(1+\l )  \int_0^t |\frac{u(s,X_s)-u(s,\tilde X_s)}{\e}|^2ds + \l^2 \int_0^t  (\e+\frac{|X_s-\tilde X_s|^2}{\e}) ds\\
&\ \ \ \ +\l \int_0^t   |\frac{u(s,X_s)-u(s,\tilde X_s)}{\e}| \|\nabla_x u(s,X_s)-\nabla_x u(s,\tilde X_s)\|  ds \\
&\les   \e \int_0^t [|(\p_s H)(s,u(s,X_s))|+|(\p_s H)(s,u(s,\tilde X_s))|+1]ds\\
&\ \ \ \ + \e  \sum_{1 \le i,j \le d} \int_0^t [|(\p_{x_j} \p_{x_i} H)(s,u(s,X_s))|+|(\p_{x_j} \p_{x_i} H)(s,u(s,\tilde X_s))|]ds\\
&\ \ \ \ + \frac{1}{\e}\int_0^t  |X_s-\tilde X_s|^2 [|(\p_s H)(s,u(s,X_s))|+|(\p_s H)(s,u(s,\tilde X_s))|+| b(s,X_s)|+1] ds \\
& \ \ \ \  +\frac{1}{\e}\sum_{1 \le i,j \le d} \int_0^t |X_s-\tilde X_s|^2 [|(\p_{x_j} \p_{x_i} H)(s,u(s,X_s))|+|(\p_{x_j} \p_{x_i} H)(s,u(s,\tilde X_s))|]ds\\
& \ \ \ \ +\frac{1}{ \e} \int_0^t   \|\nabla_x u(s,X_s)-\nabla_x u(s,\tilde X_s)\|^2ds.\\
\end{split}
\end{eqnarray}
Since $\p D$ is smooth, there exist a function $v \in W_{2d+2}^{1,2}((0,T_1)\times \Rd)$ and a sequence of functions $\{v_n\}_{n \ge 1} \subset C^{1,2}_b((0,T_1)\times \Rd)$ such that $v(t,x)=u(t,x)$ on $(0,T_1)\times D$ and $v_n$ converges to $v$ in $W_{2d+2}^{1,2}((0,T_1)\times \Rd)$. Therefore, $\nabla_x v_n$  converges to $\nabla_x v$ uniformly on $(0,T_1)\times \Rd$ by Sobolev inequality. Hence by (\ref{3.86}) we have
\begin{eqnarray} \label{2-20-4}
\begin{split}
&\ \ \ \ \int_0^t   \|\nabla_x u(s,X_s)-\nabla_x u(s,\tilde X_s)\|^2ds \\
&=\lim_{n \to \infty} \int_0^t   \|\nabla_x v_n(s,X_s)-\nabla_x v_n (s,\tilde X_s)\|^2ds \\
&\le \lim_{n \to \infty} \sum_{1 \le i  \le d} \int_0^t|\int_0^1 \nabla_{x} \p_{x_i}v_n(s,\a X_s+(1-\a)\tilde X_s)\c (X_s-\tilde X_s)d\a|^2ds\\
&\le  \sum_{1 \le i,j \le d}  \int_0^t |X_s-\tilde X_s|^2 \int_0^1 |\p_{x_j}  \p_{x_i}v(s,\a X_s+(1-\a)\tilde X_s)|^2d\a ds,
\end{split}
\end{eqnarray}
and $\int_0^t \int_0^1 |\nabla_x \p_{x_i}v(s,\a X_s+(1-\a)\tilde X_s)|^2d\a ds<\infty$, $P$-$a.e.$.
Combing this with (\ref{3.36}), (\ref{2-20-2}),  (\ref{2-20-3}),  (\ref{2-20-4})  and Theorem \ref{T4.1}, we have
\begin{eqnarray} \label{5.10}
\begin{split}
& \ \ \ \ \frac{1}{\e}|X_s-\tilde X_s|^2 \\
&\les  F_{\e}(t,X_t,\tilde X_t) \\
&\les \e+  M_t+ \e \int_0^t [|(\p_s H)(s,u(s,X_s))|+|(\p_s H)(s,u(s,\tilde X_s))|+1]ds\\
&\ \ \ \ + \e  \sum_{1 \le i,j \le d} \int_0^t [|(\p_{x_j} \p_{x_i} H)(s,u(s,X_s))|+|(\p_{x_j} \p_{x_i} H)(s,u(s,\tilde X_s))|]ds\\
&\ \ \ \ + \frac{1}{\e}\int_0^t  |X_s-\tilde X_s|^2dC_t ,
\end{split}
\end{eqnarray}
where
\begin{eqnarray} \nonumber
\begin{split}
C_t:=&\int_0^t   [ 1+| b(s,X_s)|+|(\p_s H)(s,u(s,X_s))|+|(\p_s H)(s,u(s,\tilde X_s))| \\
&+\sum_{1 \le i,j \le d}( |(\p_{x_j} \p_{x_i} H)(s,u(s,X_s))|+|(\p_{x_j} \p_{x_i} H)(s,u(s,\tilde X_s))| \\
&+\int_0^1 |\p_{x_j} \p_{x_i}v(s,\a X_s+(1-\a)\tilde X_s)|^2d\a)   ] ds.
\end{split}
\end{eqnarray}
Set $\tau_R:=\inf \{t\ge 0:C_t\ge R\} \wedge T_1$. Applying Lemma \ref{L3.11} to (\ref{5.10}) with $p=2q=\frac{1}{2}$, we have
\begin{eqnarray} \label{6.11}
\begin{split}
& \ \ \ \ [E(\sup_{s\in [0,\tau_R]}|X_s-\tilde X_s|^{1/4})]^{4}\\
&\les 16\e^2 {Ee^{ C_{\tau_R}}}\left (1+E\int_0^{T_1} [|(\p_s H)(s,u(s,X_s))|+|(\p_s H)(s,u(s,\tilde X_s))|]ds\right.\\
&\ \ \ \ + \left.\sum_{1 \le i,j \le d} E\int_0^{T_1} [|(\p_{x_j} \p_{x_i} H)(s,u(s,X_s))|+|(\p_{x_j} \p_{x_i} H)(s,u(s,\tilde X_s))|]ds\right)\\
&\le 16\e^2  e^{R}\left(1+E\int_0^{T_1} [|(\p_s H)(s,u(s,X_s))|+|(\p_s H)(s,u(s,\tilde X_s))|]ds\right. \\
&\ \ \ \ \left.+ \sum_{1 \le i,j \le d} E\int_0^{T_1} [|(\p_{x_j} \p_{x_i} H)(s,u(s,X_s))|+|(\p_{x_j} \p_{x_i} H)(s,u(s,\tilde X_s))|]ds\right).
\end{split}
\end{eqnarray}

Note that $(\p_s H)(s,u(s,x)),(\p_{x_j} \p_{x_i} H)(s,u(s,x))\in L^{2d+2}([0,T_1]\times D)$. Hence by (\ref{3.85}),
\begin{eqnarray} \nonumber
\begin{split}
&E\int_0^{T_1} [|(\p_s H)(s,u(s,X_s))|+|(\p_s H)(s,u(s,\tilde X_s))|]ds \\
&+ \sum_{1 \le i,j \le d} E\int_0^{T_1} [|(\p_{x_j} \p_{x_i} H)(s,u(s,X_s))|+|(\p_{x_j} \p_{x_i} H)(s,u(s,\tilde X_s))|]ds<\infty.
\end{split}
\end{eqnarray}
Letting $\e \to 0$ and $R \to \infty$ in (\ref{6.11}), we get $E(\sup_{s\in [0,T_1]}|X_s-\tilde X_s|^{1/4})=0$, which implies $X_t=\tilde X_t$ for all $0\le t\le T_1$, $P$-$a.e.$. Since $T_1$ is independent of the initial value $x$,  using a standard procedure, we can conclude $X_t=\tilde X_t$ for all $t>0$, $P$-$a.e.$. This completes the proof of the theorem.
\end{proof}


\vskip 0.4cm

\section{Appendix}
In this part, we provide the proofs of Proposition \ref{P4.1.1} and \ref{P4.2.1}.
Fix $z:=y(t_0,z_0,\frac{\d_5}{2})$.
Recall that $D(t,\e)$ and $C(z,\d)$ were respectively defined in (\ref{2.19-1}) and (\ref{2.19}), and
$$\O_{\e}=\{(t,x):  t\in ((t_0-\eta_2)\vee 0,(t_0+\eta_2)\wedge T_1), \ x\in C(z,\d_5) \bigcap D(t,\e)\}.$$
Set $\varrho(\e):=(\frac{\e}{2} \sin \frac{\t_0}{2}) \wedge  (\frac{\d_5}{32}\sin \frac{\t_0}{2})$ and
$$\O_{\e}':=\{(t,x):  t\in ((t_0-\eta_2)\vee 0,(t_0+\eta_2)\wedge T_1), \ x\in D(t,\varrho(\e))\}.$$

Before giving the proofs of Proposition \ref{P4.1.1} and \ref{P4.2.1}, we need a simple Lemma.

\begin{lem} \label{L7.1}
For any constant $p>0$ and function $f \in L^p(\tilde D)$, we have
\begin{eqnarray}  \label{7.1}
\begin{split}
& \lim\limits_{n_1,n_2 \to \infty} \| \int_{- \rho_0}^{\rho_0}  |   f(t,y_{n_1}(t,x,r)    )   -  f(t,y_{n_2}(t,x,r) ) |   I_{u(t,D)} (y_{n_1}(t, x, r)  ) \\
 &  \ \ \ \ \ \ \ \  \ \ \ \ \ \ \ \ \  \ \times I_{u(t,D)}  (y_{n_2}(t, x, r) ) dr   \|_{  L^p  (\tilde D)} = 0  .
 \end{split}
\end{eqnarray}
\end{lem}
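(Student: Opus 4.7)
The strategy is a standard density argument combined with the change-of-variable bound of Lemma \ref{L4.2}. Since $D$ is bounded and $u$ is Lipschitz (Proposition \ref{C3.2}), $\tilde D$ is a bounded open subset of $\R^{d+1}$, so $C_c(\tilde D)$ is dense in $L^p(\tilde D)$. I would fix $\varepsilon > 0$ and choose $\tilde f \in C_c(\tilde D)$ with $\|f - \tilde f\|_{L^p(\tilde D)} < \varepsilon$, extended by $0$ to $\R^{d+1}$ so that $\tilde f \in C_c(\R^{d+1})$ is bounded and uniformly continuous. The triangle inequality then splits the integrand in (\ref{7.1}) into the three pieces $|f(t,y_{n_1}) - \tilde f(t,y_{n_1})|$, $|\tilde f(t,y_{n_1}) - \tilde f(t,y_{n_2})|$, $|\tilde f(t,y_{n_2}) - f(t,y_{n_2})|$, producing three $L^p(\tilde D)$ contributions $I$, $II$, $III$ after applying subadditivity (or Minkowski for $p \ge 1$).

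For $I$ and $III$, I would use Minkowski's integral inequality, for $p \ge 1$ (or the subadditivity of $t \mapsto t^p$ for $0 < p \le 1$), to move $\int_{-\rho_0}^{\rho_0} dr$ outside the $L^p$ norm. For fixed $r \in (-\rho_0, \rho_0)$ and $n \ge N_0$, the set $u(t,D)$ is open by Lemma \ref{L3.1} and Proposition \ref{C3.2}, so Lemma \ref{L4.2} applied with $A = u(t,D)$ to the nonnegative integrand $|f(t,\cdot) - \tilde f(t,\cdot)|^p$ yields the pointwise-in-$t$ estimate
\[
\int_{u(t,D)} |f(t, y_n(t,x,r)) - \tilde f(t, y_n(t,x,r))|^p I_{u(t,D)}(y_n(t,x,r))\, dx \le 2 \int_{u(t,D)} |f(t,y) - \tilde f(t,y)|^p \, dy.
\]
Integrating in $t$ gives $\|(f - \tilde f)(\cdot, y_n)\, I_{u(t,D)}(y_n)\|_{L^p(\tilde D)} \le 2^{1/p}\, \varepsilon$, uniformly in $r$ and in $n \ge N_0$. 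Hence $I + III \lesssim \rho_0\, \varepsilon$ uniformly in $n_1,n_2$.

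For the middle contribution $II$, I would exploit the uniform convergence (\ref{2.29}). Since $\tilde D$ is bounded and $\gamma_n$ is uniformly bounded by Proposition \ref{P3.3}, the trajectories $\{y_n(t,x,r) : (t,x)\in\tilde D,\ |r| \le \rho_0,\ n\ge N_0\}$ lie in a fixed compact set on which $\tilde f$ is uniformly continuous. Then (\ref{2.29}) gives $\sup_{(t,x),r} |y_{n_1}(t,x,r) - y_{n_2}(t,x,r)| \to 0$ as $n_1,n_2 \to \infty$, whence $|\tilde f(t,y_{n_1}) - \tilde f(t,y_{n_2})| \to 0$ uniformly on $\tilde D \times (-\rho_0,\rho_0)$. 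Since this domain has finite Lebesgue measure, this uniform convergence forces $II \to 0$.

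Combining the three bounds yields $\limsup_{n_1,n_2 \to \infty} \|\text{LHS of } (\ref{7.1})\|_{L^p(\tilde D)} \lesssim \varepsilon$, and letting $\varepsilon \to 0$ closes the proof. The only delicate point is that the indicators $I_{u(t,D)}(y_{n_i})$ in the statement are exactly what makes the change of variables in Lemma \ref{L4.2} produce a bound by an integral over $u(t,D)$ (rather than over the potentially larger image $y_n(t,u(t,D),r)$), and hence yields a uniform control by $\|f - \tilde f\|_{L^p(\tilde D)}$; without them the approximation step would not close.
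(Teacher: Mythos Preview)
Your proposal is correct and follows essentially the same approach as the paper: both argue by density, approximate $f$ by a continuous function, split via the triangle inequality into two ``$f-\tilde f$'' terms controlled uniformly through the change-of-variable estimate of Lemma~\ref{L4.2} (with the indicator $I_{u(t,D)}$ playing exactly the role you identify), and a middle ``$\tilde f$'' term driven to zero by the uniform convergence (\ref{2.29}). The only cosmetic differences are that the paper takes $\tilde f\in C_b((0,T_1)\times\Rd)$ rather than $C_c(\tilde D)$ and writes the estimates slightly more compactly without separating the cases $p\ge 1$ and $p<1$.
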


\begin{proof}
For any $\e>0$, there exists a function $\tilde f \in C_b((0,T_1)\times \Rd)$ such that $\|  f - \tilde f   \|_{L^p(\tilde D)}^p<\e$. Then by (\ref{2.29}) and
Lemma \ref{L4.2} we have for $n_1,n_2 \ge N_0$,
\begin{eqnarray}  \nonumber
\begin{split}
&\lim_{n_1,n_2 \to \infty} \|\int_{-\rho_0}^{\rho_0} |f(t,y_{n_1}(t,x,r) )-f(t,y_{n_2}(t,x,r))|   I_{u(t,D)}(y_{n_1}(t, x, r))  I_{u(t,D)}(y_{n_2}(t, x, r)) dr \|^p_{L^p(\tilde D)} \\
& \les \lim_{n_1,n_2 \to \infty} \int_{-\rho_0}^{\rho_0} \int_{\tilde D} |f(t,y_{n_1}(t,x,r) )-\tilde f(t,y_{n_1}(t,x,r))|^p   I_{u(t,D)}(y_{n_1}(t, x, r))    dxdt dr \\
& \ \ \ \ + \lim_{n_1,n_2 \to \infty} \int_{-\rho_0}^{\rho_0} \int_{\tilde D} |f(t,y_{n_2}(t,x,r) )-\tilde f(t,y_{n_2}(t,x,r))|^p   I_{u(t,D)}(y_{n_2}(t, x, r))    dxdt dr \\
& \ \ \ \ + \lim_{n_1,n_2 \to \infty} \int_{-\rho_0}^{\rho_0} \int_{\tilde D} |\tilde f(t,y_{n_1}(t,x,r) )-\tilde f(t,y_{n_2}(t,x,r))|^p    dxdt dr \\
& \le 4   \int_{-\rho_0}^{\rho_0}   \int_{\tilde D}    |f(t,x)-\tilde f(t,x)|^p   dxdt dr \le 8 \rho_0 \e.
\end{split}
\end{eqnarray}
Since $\e$ is arbitrary, (\ref{7.1}) follows.
\end{proof}

\vskip 0.5cm

\textbf{Proof of Proposition \ref{P4.1.1}}
\vskip 0.3cm
By Proposition \ref{P3.3} and (\ref{3.29}), to show (\ref{4.49}), we need only to show (\ref{4.48}), $i.e.$ for any  $1\le i,j\le d$,
\begin{eqnarray} \label{2.13}
\psi_{i}^j ( \c ,\c, \G^z(\c,\c))\in W_{2d+2}^{0,1}((((t_0-\eta_2)\vee 0,(t_0+\eta_2)\wedge T_1)\times C(z,\d_5))\bigcap \tilde D).
\end{eqnarray}

For any given $\e>0$, recall that $N_1(\e)$ is given in Lemma \ref{L3.4}.
Firstly, we show that for $n\ge N_1(\e)$, $\psi_{n,i}^j(t,x, \G^z(t,x))\in C_b^{0,1}(\O_{\e})$.

When $(t,x)\in \O_{\e}$ with $\G^z(t,x)\neq 0$, by (\ref{3.48}), we have $y_n(t,x,r)\in D(t,\varrho(\e))$ for $r\in (0, \G^z(t,x)]$. Hence together with Proposition \ref{P3.3} and Lemma \ref{L3.3}, we can see that $\psi_{n,i}^j(t,x, \G^z(t,x))\in C_b^{0,1}(\O_{\e})$ and for $1\le m \le d$,
\begin{eqnarray} \label{3.33}
\begin{split}
\frac{d}{d x_{m}} \psi_{n, i}^{j}\left(t, x, \Gamma^{z}(t, x)\right)&= \int_{0}^{\Gamma^{z}(t, x)} \sum_{1 \leqslant k, l \leqslant d}\p_{y_{l}} \partial_{y_{k}}  \gamma_{n}^{j}(t, y_n(t, x, r))\psi_{n, m}^{l}(t, x, r) \psi_{n, i}^{k}(t, x, r) d r\\
&\ \ \ \ + \int_{0}^{\Gamma^{z}(t, x)}\sum_{1 \leq k \leq d} \partial_{y_{k}} \g_{n}^{j}(t, y_n(t, x, r)) \partial_{x_{m}} \psi_{n, i}^{k}(t, x, r) d r\\
&\ \ \ \ +\sum_{ 1 \le k \leq d} \partial_{y_{k}} \gamma_{n}^{j}\left(t, y_n\left(t, x, \Gamma^{z}(t, x)\right)\right)\psi_{n,i}^{k}\left(t, x, \Gamma^{z}\left(t, x\right)\right)\partial_{x_{m}} \Gamma^{z}(t, x),\\
\end{split}
\end{eqnarray}
where $\frac{d}{d x_{m}} \psi_{n, i}^{j}\left(t, x, \Gamma^{z}(t, x)\right)$ stands for the partial derivative of $\psi_{n, i}^{j}\left(t, x, \Gamma^{z}(t, x)\right)$ $w.r.t.$ $x_m$.


\vskip 0.3cm

Now we show that
\begin{eqnarray}\label{3.80}
\sup_{\e >0} \sup_{n\ge N_1(\e)} \|\int_0^{\G^z(t,x)}|\nabla_x \psi_{n,i}^j(t,x, r)|dr\|_{L^{2d+2}(\O_{\e})}< \infty.
\end{eqnarray}

For $n \ge N_1(\e)$ and $(t,x)\in \O_{\e}$, by Lemma \ref{L4.2} and (\ref{3.48}) we have for $1 \le m,k \le d$,
\begin{eqnarray} \nonumber
\begin{split}
&\ \ \ \  \|\int_0^{\G^z(t,x)}|\p_{y_m} \p_{y_k}\g_n^j(t,y_n(t,x,r))|dr\|_{L^{2d+2}(\O_{\e})}\\
&\les  (\int_{(t_0-\eta_2)\vee 0}^{(t_0+\eta_2)\wedge T_1}\int_{\Rd} | \int_0^{\G^z(t,x)} |\p_{y_m} \p_{y_k}\g_n^j(t,y_n(t,x,r))|^{2d+2}  I_{D(t,\varrho(\e))}(y_n(t,x,r)) dr | dxdt)^{\frac{1}{2d+2}}\\
&\le    (\int_{-\rho_1}^{\rho_1}\int_{(t_0-\eta_2)\vee 0}^{(t_0+\eta_2)\wedge T_1}\int_{\Rd}|\p_{y_m} \p_{y_k}\g_n^j(t,y_n(t,x,r))|^{2d+2}  I_{D(t,\varrho(\e))}(y_n(t,x,r)) dxdtdr)^{\frac{1}{2d+2}}\\
&\les  (\int_{-\rho_1}^{\rho_1}\int_{(t_0-\eta_2)\vee 0}^{(t_0+\eta_2)\wedge T_1}\int_{\Rd}|\p_{y_m} \p_{y_k}\g_n^j(t,x)|^{2d+2} I_{D(t,\varrho(\e))}(x)dxdtdr)^{\frac{1}{2d+2}} \\
&\les   \| \g_n^j \|_{W_{2d+2}^{0,2}(\O_\e')}.\\
\end{split}
\end{eqnarray}
Together with (\ref{3.79}), we deduce that
\begin{eqnarray} \label{7.2}
\begin{split}
\sup_{\e >0} \sup_{n\ge N_1(\e)}  \|\int_0^{\G^z(t,x)}|\p_{y_m} \p_{y_k}\g_n^j(t,y_n(t,x,r))|dr\|_{L^{2d+2}(\O_{\e})}< \infty.
\end{split}
\end{eqnarray}
Applying the Gronwall's inequality to equation (\ref{4.53}), it is easy to see that
\begin{eqnarray} \nonumber
\begin{split}
&\ \ \ \ \sup_{\e >0} \sup_{n\ge N_1(\e)} \sum_{1\le j\le d}|\int_0^{\G^z(t,x)} \|\nabla_x \psi_{n,i}^j(t,x, r)|dr \|_{L^{2d+2}(\O_{\e})} \\
&\les \sup_{\e >0} \sup_{n\ge N_1(\e)}  \sum_{1\le j,m,k\le d} \|\int_0^{\G^z(t,x)}|\p_{y_m} \p_{y_k}\g_n^j(t,y_n(t,x,r))|dr \|_{L^{2d+2}(\O_{\e})} <\infty.
\end{split}
\end{eqnarray}

\vskip 0.3cm

Next we show that for any $\e>0$,
\begin{eqnarray}\label{3.81}
\sup_{n_1,n_2 \to \infty } \|\int_0^{\G^z(t,x)}|\nabla_x \psi_{n_1,i}^j(t,x, r)-\nabla_x \psi_{n_2,i}^j(t,x, r)|dr\|_{L^{2d+2}(\O_{\e})}=0.
\end{eqnarray}

In view of the boundness of $|\psi_{n,i}^j(t,x, r)|$ and $|\nabla_x\g_n^j(t,x)|$, Lemma \ref{L4.2} and (\ref{3.48}), using the Gronwall's inequality we have for $n_1,n_2\ge N_1(\e)$,
\begin{eqnarray} \nonumber
\begin{split}
&\ \ \ \ \sum_{1\le j \le d}  \|\int_0^{\G^z(t,x)}|\nabla_x \psi_{n_1,i}^j(t,x, r)-\nabla_x \psi_{n_2,i}^j(t,x, r)|dr\|_{L^{2d+2}(\O_{\e})}\\
&\les \sum_{1\le j,m,k\le d} \|\int_{-\rho_1}^{\rho_1}|\p_{y_m} \p_{y_k}\g_{n_1}^j(t,y_{n_1}(t,x,r))-\p_{y_m} \p_{y_k}\g^j(t,y_{n_1}(t,x,r)) |\\
&\ \ \ \ \ \ \ \ \ \ \ \ \ \ \ \ \times  I_{D(t,\varrho(\e))}(y_{n_1}(t,x,r))dr\|_{L^{2d+2}(\O_{\e})}\\
&\ \ \ \ + \sum_{1\le j,m,k\le d} \|\int_{-\rho_1}^{\rho_1}|\p_{y_m} \p_{y_k}\g^j(t,y_{n_1}(t,x,r))-\p_{y_m} \p_{y_k}\g^j(t,y_{n_2}(t,x,r)) |\\
&\ \ \ \ \ \ \ \ \ \ \ \ \ \ \ \ \times I_{D(t,\varrho(\e))}(y_{n_1}(t,x,r))I_{D(t,\varrho(\e))}(y_{n_2}(t,x,r))dr\|_{L^{2d+2}(\O_{\e})}\\
&\ \ \ \ + \sum_{1\le j,m,k\le d} \|\int_{-\rho_1}^{\rho_1}|\p_{y_m} \p_{y_k}\g^j(t,y_{n_2}(t,x,r)) |I_{D(t,\varrho(\e))}(y_{n_2}(t,x,r))dr\|_{L^{2d+2}(\O_{\e})}\\
&\ \ \ \ \ \ \ \ \ \ \ \ \ \ \ \ \times \sup_{(t,x)\in \O_{\e},\atop |r|< \rho_1,1\le k \le d}|\psi_{n_1,i}^k(t,x, r)-\psi_{n_2,i}^k(t,x, r)|\\
&\ \ \ \ + \sum_{1\le j,m,k\le d} \|\int_{-\rho_1}^{\rho_1}|\p_{y_m} \p_{y_k}\g_{n_2}^j(t,y_{n_2}(t,x,r))-\p_{y_m} \p_{y_k}\g^j(t,y_{n_2}(t,x,r)) | \\
&\ \ \ \ \ \ \ \ \ \ \ \ \ \ \ \ \times  I_{D(t,\varrho(\e))}(y_{n_2}(t,x,r))dr\|_{L^{2d+2}(\O_{\e})}\\
&\ \ \ \ + \sum_{1\le k\le d} \|\int_0^{\G^z(t,x)}|\nabla_x \psi_{n_1,i}^k(t,x, r)|dr\|_{L^{2d+2}(\O_{\e})}\\
&\ \ \ \ \ \ \ \ \ \ \ \ \ \ \ \ \times   \sup_{(t,x)\in \O_{\e},\atop |r|< \rho_1,1\le j \le d}|\nabla_y \g_{n_1}^j(t,y_{n_1}(t,x,r))-\nabla_y \g_{n_2}^j(t,y_{n_2}(t,x,r))|\\
&\les \sum_{1\le j \le d} ( \|\g_{n_1}^j-\g^j\|_{W_{2d+2}^{0,2}(\O_{\e}')}+ \|\g_{n_2}^j-\g^j\|_{W_{2d+2}^{0,2}(\O_{\e}')})\\
&\ \ \ \ + \sum_{1\le j,m,k\le d} \|\int_{-\rho_1}^{\rho_1}|\p_{y_m} \p_{y_k}\g^j(t,y_{n_1}(t,x,r))-\p_{y_m} \p_{y_k}\g^j(t,y_{n_2}(t,x,r)) |\\
&\ \ \ \ \ \ \ \ \ \ \ \ \ \ \ \ \times I_{D(t,\varrho(\e))}(y_{n_1}(t,x,r))I_{D(t,\varrho(\e))}(y_{n_2}(t,x,r))dr\|_{L^{2d+2}(\O_{\e})}\\
&\ \ \ \ +  \sum_{1\le j \le d} \| \g^j  \|_{W_{2d+2}^{0,2}(\O_{\e}')}\sup_{(t,x)\in \O_{\e},\atop |r|< \rho_1,1\le k \le d}|\psi_{n_1,i}^k(t,x, r)-\psi_{n_2,i}^k(t,x, r)|\\
&\ \ \ \ +  \sum_{1\le k \le d} \|\int_0^{\G^z(t,x)}|\nabla_x \psi_{n_1,i}^k(t,x, r)|dr\|_{L^{2d+2}(\O_{\e})}\\
&\ \ \ \ \ \ \ \ \ \ \ \ \ \ \ \ \times   \sup_{(t,x)\in \O_{\e},\atop |r|< \rho_1,1\le j \le d}|\nabla_y \g_{n_1}^j(t,y_{n_1}(t,x,r))-\nabla_y \g_{n_2}^j(t,y_{n_2}(t,x,r))|,
\end{split}
\end{eqnarray}
Then by Proposition \ref{P3.3}, (\ref{2.29}), (\ref{3.30}), (\ref{3.80}) and Lemma \ref{L7.1}, we get (\ref{3.81}).

\vskip 0.3cm

Next we show that
\begin{eqnarray} \label {3.34}
\lim_{n_1,n_2\to \infty} \|\psi_{n_1,i}^j(t,x, \G^z(t,x))- \psi_{n_2,i}^j(t,x, \G^z(t,x))\|_{W_{2d+2}^{0,1}(\O_{\e})}=0.
\end{eqnarray}

By (\ref{4.4}), (\ref{3.48}), (\ref{3.33}), the boundness of $|\nabla_x \G^z(t,x)|$ and $|\nabla_x \g_n(t,x)|$, for $n_1,n_2\ge N_1(\e)$ and $(t,x)\in \O_{\e}$, we have for $1 \le m \le d$,
\begin{eqnarray} \nonumber
\begin{split}
&\ \ \ \ | \frac{d}{d x_{m}} \psi_{n_1, i}^{j}\left(t, x, \G^{z}(t, x)\right)-\frac{d}{d x_{m}} \psi_{{n_{2}, i}}^{j}\left(t, x, \Gamma^{z}(t, x)\right)|\\
&\le  | \! \! \int_{0}^{\Gamma^{z}(t, x)}  \! \! \!  \sum_{1 \leqslant k, l \leqslant d}  \! |\p_{y_{l}} \partial_{y_{k}}  \g_{n_{1}}^{j}(t, y_{n_1}(t, x, r)) \! -  \! \p_{y_{l}} \partial_{y_{k}}  \g_{n_{2}}^{j}(t, y_{n_1}(t, x, r)) | dr|  \! \! \sup _{(t,x)\in \O_{\e}, |r|< \rho_1, \atop n\ge N_1,1\le m,l \le d} \!  \left|\psi_{n, m}^{l}(t, x, r)\right|^2 \\
&\ \ \ \  +| \! \! \int_{0}^{\Gamma^{z}(t, x)}  \!  \!  \! \! \sum_{1 \leqslant k, l \leqslant d}  \!  \! |\p_{y_{l}} \partial_{y_{k}}   \! \g_{n_{2}}^{j}(t, y_{n_1}(t, x, r)) \! - \!  \p_{y_{l}} \partial_{y_{k}}   \! \g_{n_{2}}^{j}(t, y_{n_2}(t, x, r)) | dr |  \! \! \sup _{(t,x)\in \O_{\e}, |r|< \rho_1, \atop n\ge N_1,1\le m,l \le d} \!  \! \left|\psi_{n, m}^{l}(t, x, r)\right|^2 \\
&\ \ \ \  +| \int_{0}^{\Gamma^{z}(t, x)} \sum_{1 \leqslant k, l \leqslant d} |\p_{y_{l}} \partial_{y_{k}}  \g_{n_{2}}^{j}(t, y_{n_2}(t, x, r))| dr | \sup _{(t,x)\in \O_{\e}, |r|< \rho_1, \atop 1\le  m,l \le d} | \psi_{n_1, m}^{l}(t, x, r)-\psi_{n_2, m}^{l}(t, x, r)| \\
&\ \ \ \ \ \ \ \ \   \times \sup _{(t,x)\in \O_{\e}, |r|< \rho_1, \atop n\ge N_1,1\le m,k \le d}\left|\psi_{n, m}^{k}(t, x, r)\right|\\
&\ \ \ \  + | \int_{0}^{\Gamma^{z}(t, x)} \sum_{1\le k \le d} |\nabla_{x} \psi_{n_{1}, i}^{k}(t, x, r) |  d r | \sup _{(t, y)\in \tilde D} | \nabla_{y}\g_{n_{1}}^{j}(t, y)-\nabla_{y} \gamma_{n_{2}}^{j}(t, y) | \\
&\ \ \ \  + | \int_{0}^{\Gamma^{z}(t, x)} \sum_{1\le k \le d} |\nabla_{x} \psi_{n_{1}, i}^{k}(t, x, r)-\nabla_{x} \psi_{n_{2}, i}^{k}(t, x, r) |  d r | \sup_{(t,y)\in \tilde D,n\ge N_1} | \nabla_{y}\g_{n}^{j}(t, y)|\\
&\ \ \ \  +\sum_{ 1 \le k \leq d} |\partial_{y_{k}} \gamma_{n_1}^{j}\left(t, y_{n_1}\left(t, x, \Gamma^{z}(t, x)\right)\right)\psi_{n_1,i}^{k}\left(t, x, \Gamma^{z}\left(t, x\right)\right)\\
&\ \ \ \  \ \ \ \  \ \ \ \   \ \ \ -\partial_{y_{k}} \gamma_{n_2}^{j}\left(t, y_{n_2}\left(t, x, \Gamma^{z}(t, x)\right)\right)\psi_{n_2,i}^{k}\left(t, x, \Gamma^{z}\left(t, x\right)\right)| |\partial_{x_{m}} \Gamma^{z}(t, x)|\\
&\les  \int_{-\rho_1}^{\rho_1} \sum_{1 \le k, l \le d} |\p_{y_{l}} \partial_{y_{k}}  \g_{n_{1}}^{j}(t, y_{n_1}(t, x, r))- \p_{y_{l}} \partial_{y_{k}}  \g_{n_{2}}^{j}(t, y_{n_1}(t, x, r)) | I_{D(t,\varrho(\e))}(y_{n_1}(t, x, r)) dr \\
&\ \ \ \  +\int_{-\rho_1}^{\rho_1} \sum_{1 \leqslant k, l \leqslant d} |\p_{y_{l}} \partial_{y_{k}}  \g_{n_{2}}^{j}(t, y_{n_1}(t, x, r))- \p_{y_{l}} \partial_{y_{k}}  \g_{n_{2}}^{j}(t, y_{n_2}(t, x, r)) | \\
&\ \ \ \ \ \ \ \ \     \times  I_{D(t,\varrho(\e))}(y_{n_1}(t, x, r))  I_{D(t,\varrho(\e))}(y_{n_2}(t, x, r)) dr \\
&\ \ \ \  +\int_{-\rho_1}^{\rho_1} \sum_{1 \leqslant k, l \leqslant d} |\p_{y_{l}} \partial_{y_{k}}  \g_{n_{2}}^{j}(t, y_{n_2}(t, x, r))| I_{D(t,\varrho(\e))}(y_{n_2}(t, x, r))  dr \\
&\ \ \ \ \ \ \ \ \    \times  \sup _{(t,x)\in \O_{\e}, |r|< \rho_1, \atop 1\le m,l \le d} | \psi_{n_1, m}^{l}(t, x, r)-\psi_{n_2, m}^{l}(t, x, r)|\\
&\ \ \ \  + \sum_{ 1 \le k \leq d} | \int_{0}^{\Gamma^{z}(t, x)} |\nabla_{x} \psi_{n_{1}, i}^{k}(t, x, r) |  d r | \sup _{(t,y)\in \tilde D} | \nabla_{y}\g_{n_{1}}^{j}(t, y)-\nabla_{y} \gamma_{n_{2}}^{j}(t, y) | \\
&\ \ \ \  + \sum_{ 1 \le k \leq d} | \int_{0}^{\Gamma^{z}(t, x)} |\nabla_{x} \psi_{n_{1}, i}^{k}(t, x, r)-\nabla_{x} \psi_{n_{2}, i}^{k}(t, x, r) |  d r | \\
&\ \ \ \  +\sum_{ 1 \le k \leq d} |\partial_{y_{k}} \gamma_{n_1}^{j}\left(t, y_{n_1}\left(t, x, \Gamma^{z}(t, x)\right)\right)\psi_{n_1,i}^{k}\left(t, x, \Gamma^{z}\left(t, x\right)\right)\\
&\ \ \ \ \ \ \ \ \  -\partial_{y_{k}} \gamma_{n_2}^{j}\left(t, y_{n_2}\left(t, x, \Gamma^{z}(t, x)\right)\right)\psi_{n_2,i}^{k}\left(t, x, \Gamma^{z}\left(t, x\right)\right)|. \\
\end{split}
\end{eqnarray}
Combining this with Proposition \ref{P3.3}, Lemma \ref{L4.2}, Lemma \ref{L7.1}, (\ref{2.29}), (\ref{3.30}), (\ref{3.80}) and (\ref{3.81}), we obtain (\ref{3.34}).

\vskip 0.3cm

Finally we show (\ref{2.13}).
 From (\ref{4.4}), (\ref{3.33}), (\ref{3.80}) and (\ref{7.2}), we have
 $$\sup_{\e>0}\sup_{n\ge N_1(\e)}\|\psi_{n,i}^j(t,x, \G^z(t,x))\|_{W_{2d+2}^{0,1}(\O_{\e})}<\infty.$$
 Together with (\ref{3.30}) and (\ref{3.34}), we see that $\psi_{i}^j(t,x, \G^z(t,x))\in {W_{2d+2}^{0,1}(\O_{\e})}$ and
 $$\sup_{\e>0} \|\psi_{i}^j(t,x, \G^z(t,x))\|_{W_{2d+2}^{0,1}(\O_{\e})}<\infty.$$
Since $\bigcup_{\e>0}\O_{\e}=(((t_0-\eta_2)\vee 0,(t_0+\eta_2)\wedge T_1)\times C(z,\d_5))\bigcap \tilde D$, we have (\ref{2.13}).
\hfill $\blacksquare$

\vskip 0.6cm

\textbf{Proof of Proposition \ref{P4.2.1}}
\vskip 0.3cm
We first show that for any $\e>0$,
\begin{eqnarray} \label{3.45}
\lim_{n,m \to \infty}\|\int_{\Gamma_{n}^{z}(t, x)}^{\Gamma_{m}^{z}(t, x)} |(\partial_{t} \gamma_{n} ) (t, y_{n}(t, x, \tau) )| d \tau\|_{L^{2d+2}(\O_{\e})}=0.
\end{eqnarray}

By Lemma \ref{L4.2}, (\ref{3.48}) and the H\"{o}lder inequality, we have for any $M>0$ and $n\ge N_1(\e)$,
\begin{eqnarray} \nonumber
\begin{split}
& \ \ \ \ \|\int_{\Gamma_{n}^{z}(t, x)}^{\Gamma_{m}^{z}(t, x)} |(\partial_{t} \gamma_{n} ) (t, y_{n}(t, x, \tau) )| d \tau\|_{L^{2d+2}(\O_{\e})}\\
& \le \|\int_{\Gamma_{n}^{z}(t, x)}^{\Gamma_{m}^{z}(t, x)}| (\partial_{t} \gamma_{n} ) (t, y_{n}(t, x, \tau) )- (\partial_{t} \g ) (t, y_{n}(t, x, \tau) )| d \tau\|_{L^{2d+2}(\O_{\e})}\\
& \ \ \ \ +\|\int_{\Gamma_{n}^{z}(t, x)}^{\Gamma_{m}^{z}(t, x)}| (\partial_{t} \g ) (t, y_{n}(t, x, \tau) )| d \tau\|_{L^{2d+2}(\O_{\e})}\\
& \le \|\int_{-\rho_1}^{\rho_1}| (\partial_{t} \gamma_{n} ) (t, y_{n}(t, x, \tau) )- (\partial_{t} \g ) (t, y_{n}(t, x, \tau) )|I_{D(t,\varrho(\e))}(y_{n}(t, x, \tau)) d \tau\|_{L^{2d+2}(\O_{\e})}\\
& \ \ \ \ +\|\int_{\Gamma_{n}^{z}(t, x)}^{\Gamma_{m}^{z}(t, x)}| (\partial_{t} \g ) (t, y_{n}(t, x, \tau) )| d \tau\|_{L^{2d+2}(\O_{\e})}\\
&\les  \|\partial_{t} \g_n  -  \partial_{t} \gamma  \|_{L^{2d+2}(\O_{\e}')} \\
& \ \ \ \ +  (\int_{(t_{0}-\eta_{2})\vee 0}^{(t_{0}+\eta_{2}) \wedge T_1}   \int_{C(z, \d_5) \bigcap D(t,\e)}   |   \int_{\Gamma_{n}^{z}(t, x)}^{\Gamma_{m}^{z}(t, x)}| (\partial_{t} \g ) (t, y_{n}(t, x, \tau) )|^{2d+2} d \tau | d x d t)^{\frac{1}{2d+2}} \\
&\le  \| \partial_{t} \g_n- \partial_{t} \gamma \|_{L^{2d+2}(\O_{\e}')}+M (\int_{(t_{0}-\eta_{2})\vee 0}^{(t_{0}+\eta_{2}) \wedge T_1} \int_{C(z, \d_5)} |\int_{\Gamma_{n}^{z}(t, x)}^{\Gamma_{m}^{z}(t, x)}d\tau | d x d t)^{\frac{1}{2d+2}} \\
&\ \ \ \ + (\int_{(t_{0}-\eta_{2})\vee 0}^{(t_{0}+\eta_{2}) \wedge T_1}    \int_{C(z, \d_5) \bigcap D(t,\e)}   \int_{-\rho_1}^{\rho_1}    |   (\partial_{t} \g ) (t, y_{n}(t, x, \tau) )I_{| (\partial_{t} \g ) (t, y_{n}(t, x, \tau) )|>M}   |^{2d+2} \\
&\ \ \ \ \ \ \ \ \times I_{D(t,\varrho(\e))}(y_{n}(t, x, \tau))d \tau d x d t )^{\frac{1}{2d+2}}  \\
&\lesssim  \|\partial_{t} \g_n-\partial_{t} \gamma \|_{L^{2d+2}(\O_{\e}')}+M (\int_{(t_{0}-\eta_{2})\vee 0}^{(t_{0}+\eta_{2}) \wedge T_1} \int_{C(z, \d_5)}|\Gamma_{n}^{z}(t, x)-\Gamma_{m}^{z}(t, x)| d x d t )^{2d+2} \\
&\ \ \ \ +(\int_{-\rho_1}^{\rho_1}\int_{(t_{0}-\eta_{2})\vee 0}^{(t_{0}+\eta_{2}) \wedge T_1} \int_{D(t,\varrho(\e))}| (\partial_{t} \g ) (t, x) I_{|(\p_t \g(t,x))|\ge M}|^{2d+2}  d x d t d \tau)^{\frac{1}{2d+2}} .
\end{split}
\end{eqnarray}
Combining this with Proposition \ref{P3.3} and (\ref{3.17}), we obtain that
\begin{eqnarray} \label{3.22}
\begin{split}
&\ \ \ \ \lim_{n,m \to \infty}\|\int_{\Gamma_{n}^{z}(t, x)}^{\Gamma_{m}^{z}(t, x)} |(\partial_{t} \gamma_{n} ) (t, y_{n}(t, x, \tau) )| d \tau\|_{L^{2d+2}(\O_{\e})} \\
&\lesssim (\int_{(t_{0}-\eta_{2})\vee 0}^{(t_{0}+\eta_{2}) \wedge T_1} \int_{D(t,\varrho(\e))}| (\partial_{t} \g ) (t, x)  I_{|(\p_t \g(t,x))|\ge M}|^{2d+2}  d x d t)^{\frac{1}{2d+2}} .
\end{split}
\end{eqnarray}
Since $\g \in W_{2d+2}^{1,2}(\tilde D)$, letting $M \to \infty$ in (\ref{3.22}), we get (\ref{3.45}).

\vskip 0.3cm

Next we show that for any $\e>0$,
\begin{eqnarray}  \label{2.31}
\begin{split}
\lim_{n,m \to \infty} \|   \int_0^{\Gamma_{m}^{z}(t, x)} | (\partial_{t} \g_n) (t, y_{n}(t, x, \tau)) \! -\! (\partial_{t} \g_m) (t, y_{m}(t, x, \tau))| d \tau \|_{L^{2d+2}(\! \O_{\e}\! )}\!=\!0.
\end{split}
\end{eqnarray}
For $(t,x)\in \O_{\e}$ and $n,m \ge N_1(\e)$, by (\ref{3.48}) we have
\begin{eqnarray} \nonumber
\begin{split}
& \ \ \ \  | \int_0^{\Gamma_{m}^{z}(t, x)} | (\partial_{t} \g_n) (t, y_{n}(t, x, \tau)) \! -\! (\partial_{t} \g_m) (t, y_{m}(t, x, \tau))| d \tau |\\
&\le |\int_0^{\Gamma_{m}^{z}(t, x)} | (\partial_{t} \g_{n}) (t, y_{n}(t, x, \tau)) -(\partial_{t} \g) (t, y_{n}(t, x, \tau))|d\tau  |\\
&\ \ \ \ + |\int_0^{\Gamma_{m}^{z}(t, x)} | (\partial_{t} \g_m) (t, y_{m}(t, x, \tau)) -(\partial_{t} \g) (t, y_{m}(t, x, \tau))| d\tau | \\
&\ \ \ \ + |\int_0^{\Gamma_{m}^{z}(t, x)} | (\partial_{t} \g) (t, y_{n}(t, x, \tau)) -(\partial_{t} \g) (t, y_{m}(t, x, \tau))| d\tau | \\
&\le \int_{-\rho_1}^{\rho_1}| (\partial_{t} \g_{n}) (t, y_{n}(t, x, \tau)) -(\partial_{t} \g) (t, y_{n}(t, x, \tau))|I_{D(t,\varrho(\e))}(y_{n}(t, x, \tau)) d\tau  \\
&\ \ \ \ + \int_{-\rho_1}^{\rho_1}| (\partial_{t} \g_{m}) (t, y_{m}(t, x, \tau)) -(\partial_{t} \g) (t, y_{m}(t, x, \tau))|I_{D(t,\varrho(\e))}(y_{m}(t, x, \tau)) d\tau  \\
&\ \ \ \ + \int_{-\rho_1}^{\rho_1}  | (\partial_{t} \g) (t, y_{n}(t, x, \tau)) -(\partial_{t} \g) (t, y_{m}(t, x, \tau))| I_{D(t,\varrho(\e))}(y_{n}(t, x, \tau)) I_{D(t,\varrho(\e))}(y_{m}(t, x, \tau)) d\tau, \\
\end{split}
\end{eqnarray}
which implies
\begin{eqnarray}  \nonumber
\begin{split}
& \ \ \ \ \|   \int_0^{\Gamma_{m}^{z}(t, x)} | (\partial_{t} \g_n) (t, y_{n}(t, x, \tau)) - (\partial_{t} \g_m) (t, y_{m}(t, x, \tau))| d \tau \|_{L^{2d+2}(  \O_{\e}  )}\\
&\les    (\int_{-\rho_1}^{\rho_1} \int_{(t_0-\eta_2)\vee 0}^{(t_0+\eta_2)\wedge T_1}  \int_{\Rd}| (\partial_{t} \g_{n} ) (t, y_{n}(t, x, \tau) )-(\partial_{t} \g) (t, y_{n}(t, x, \tau))| ^{2d+2} \\
&\ \ \ \ \ \ \ \ \times  I_{D(t,\varrho(\e))}(y_{n}(t, x, \tau))dxdtd\tau)^{\frac{1}{2d+2}} \\
&\ \ \ \ + (\int_{-\rho_1}^{\rho_1} \int_{(t_0-\eta_2)\vee 0}^{(t_0+\eta_2)\wedge T_1}  \int_{\Rd}| (\partial_{t} \g_{m} ) (t, y_{m}(t, x, \tau) )-(\partial_{t} \g) (t, y_{m}(t, x, \tau))| ^{2d+2} \\
&\ \ \ \ \ \ \ \ \times  I_{D(t,\varrho(\e))}(y_{m}(t, x, \tau))dxdtd\tau)^{\frac{1}{2d+2}}  \\
&\ \ \ \ + (\int_{-\rho_1}^{\rho_1} \int_{(t_0-\eta_2)\vee 0}^{(t_0+\eta_2)\wedge T_1}  \int_{\Rd}| (\partial_{t} \g) (t, y_{n}(t, x, \tau)) -(\partial_{t} \g) (t, y_{m}(t, x, \tau))|^{2d+2}\\
&\ \ \ \ \ \ \ \ \times   I_{D(t,\varrho(\e))}(y_{n}(t, x, \tau)) I_{D(t,\varrho(\e))}(y_{m}(t, x, \tau))dxdtd\tau )^{\frac{1}{2d+2}} \\
&\les  \|\partial_{t} \g_{n}  -\partial_{t} \g  \|_{L^{2d+2}(\O_{\e}')}+\|\partial_{t} \g_{m}  -\partial_{t} \g   \|_{L^{2d+2}(\O_{\e}')} \\
&\ \ \ \ + (\int_{-\rho_1}^{\rho_1} \int_{(t_0-\eta_2)\vee 0}^{(t_0+\eta_2)\wedge T_1}  \int_{\Rd}| (\partial_{t} \g) (t, y_{n}(t, x, \tau)) -(\partial_{t} \g) (t, y_{m}(t, x, \tau))|^{2d+2}\\
&\ \ \ \ \ \ \ \ \times   I_{D(t,\varrho(\e))}(y_{n}(t, x, \tau)) I_{D(t,\varrho(\e))}(y_{m}(t, x, \tau))dxdtd\tau )^{\frac{1}{2d+2}} \to 0, \\
\end{split}
\end{eqnarray}
as $n,m \to \infty$ by Proposition \ref{P3.3}, Lemma \ref{L4.2} and Lemma \ref{L7.1}.

\vskip 0.3cm

Now we show (\ref{4.50}), $i.e.$
\begin{eqnarray}\label{7.2.1}
\begin{split}
\sup_{\e>0} \sup_{n\ge N_1(\e)} \| \Lambda_n(t,x,\G_n^z(t,x)) \|_{L^{2d+2}(\O_{\e})}<\infty.
\end{split}
\end{eqnarray}
Applying the Gronwall's inequality to (\ref{3.10}), we get that for $(t,x)\in \O_{\e}$ and $n \ge N_1(\e)$,
\begin{eqnarray} \label{3.18}
\begin{split}
& \ \ \ \ \| \sup_{\tau \in (0,\Gamma_{n}^{z}(t, x)]\cup(\Gamma_{n}^{z}(t, x), \Gamma_{m}^{z}(t, x)]} |\Lambda_{n}(t, x, \tau)| \|_{L^{2d+2}(\O_{\e})}\\
&\les \| \sup_{\tau \in (0,\Gamma_{n}^{z}(t, x)]\cup(\Gamma_{n}^{z}(t, x), \Gamma_{m}^{z}(t, x)]} | \int_0^\tau| (\partial_{t} \g_{n} ) (t, y_{n}(t, x, \tau') )|  d\tau' | \|_{L^{2d+2}(\O_{\e})}\\
&\le \| \int_{-\rho_1}^{\rho_1}| (\partial_{t} \g_{n} ) (t, y_{n}(t, x, \tau') )| I_{D(t,\varrho(\e))}(y_{n}(t, x, \tau'))  d\tau'\|_{L^{2d+2}(\O_{\e})} \\
&\les (\int_{-\rho_1}^{\rho_1} \int_{(t_0-\eta_2)\vee 0}^{(t_0+\eta_2)\wedge T_1}  \int_{\Rd}| (\partial_{t} \g_{n} ) (t, y_{n}(t, x, \tau') )|^{2d+2} I_{D(t,\varrho(\e))}(y_{n}(t, x, \tau')) dxdtd\tau')^{\frac{1}{2d+2}} \\
&\les \| \partial_{t} \g_{n}   \|_{L^{2d+2}(\O'_{\e})} , \\
\end{split}
\end{eqnarray}
where the last inequality follows from Lemma \ref{L4.2}. Together with (\ref{3.79}), we have (\ref{7.2.1}).

\vskip 0.3cm

Next we show that for any $\e>0$,
\begin{eqnarray} \label{2-17-3}
\begin{split}
\lim_{n,m \to \infty} \| \Lambda_{n}(t, x, \Gamma_{m}^{z}(t, x))-\Lambda_m(t, x, \Gamma_{m}^{z}(t, x)) \|_{L^{2d+2}(\O_{\e})}=0.
\end{split}
\end{eqnarray}

Since for $ r \in \R$,
\begin{eqnarray} \nonumber
\begin{split}
& \ \ \ \ |\Lambda_{n}(t, x, r)-\Lambda_m(t, x, r)|\\
&\le | \int_0^r | (\partial_{t} \g_n) (t, y_{n}(t, x, \tau)) \! -\! (\partial_{t} \g_m) (t, y_{m}(t, x, \tau))| d \tau | \\
&\ \ \ \ + | \int_0^r |(\nabla_y \g_n(t,y_n (t, x, \tau))-\nabla_y \g(t,y_n(t,x,\tau))) \c \Lambda_{n}(t, x, \tau)|d\tau | \\
&\ \ \ \ + | \int_0^r|(\nabla_y \g_m(t,y_m (t, x, \tau))-\nabla_y \g(t,y_m(t,x,\tau))) \c \Lambda_{n}(t, x, \tau)|d\tau | \\
&\ \ \ \ + | \int_0^r |(\nabla_y \g(t,y_n(t, x, \tau))-\nabla_y \g(t,y_{m}(t, x, \tau))) \c \Lambda_{n}(t, x, \tau)|d\tau | \\
&\ \ \ \ +\sup_{(t,y) \in [0,T_1] \times \Rd}  |\nabla_y \g_m(t,y)| | \int_0^r|(\Lambda_{n}(t, x, \tau)-\Lambda_m(t, x, \tau))| d\tau |,
\end{split}
\end{eqnarray}
by the Gronwall's inequality and Lemma \ref{L3.4}, we see that for $(t,x)\in \O_{\e}$ and $n,m \ge N_1(\e)$,
\begin{eqnarray} \label{3.42}
\begin{split}
& \ \ \ \ |\Lambda_{n}(t, x, \Gamma_{m}^{z}(t, x))-\Lambda_m(t, x, \Gamma_{m}^{z}(t, x))|\\
&\les | \int_0^{\Gamma_{m}^{z}(t, x)} | (\partial_{t} \g_n) (t, y_{n}(t, x, \tau)) \! -\! (\partial_{t} \g_m) (t, y_{m}(t, x, \tau))| d \tau | \\
&\ \ \ \ +  |\int_0^{\Gamma_{m}^{z}(t, x)}|(\nabla_y \g_n(t,y_n (t, x, \tau))-\nabla_y \g(t,y_n(t,x,\tau))) \c \Lambda_{n}(t, x, \tau)|d\tau|\\
&\ \ \ \ + | \int_0^{\Gamma_{m}^{z}(t, x)}|(\nabla_y \g_m(t,y_m (t, x, \tau))-\nabla_y \g(t,y_m(t,x,\tau))) \c \Lambda_{n}(t, x, \tau)|d\tau|\\
&\ \ \ \ + | \int_0^{\Gamma_{m}^{z}(t, x)}|(\nabla_y \g(t,y_n(t, x, \tau))-\nabla_y \g(t,y_{m}(t, x, \tau))) \c \Lambda_{n}(t, x, \tau)|d\tau | \\
&\le | \int_0^{\Gamma_{m}^{z}(t, x)} | (\partial_{t} \g_n) (t, y_{n}(t, x, \tau)) \! -\! (\partial_{t} \g_m) (t, y_{m}(t, x, \tau))| d \tau | \\
&\ \ \ \ +  \rho_1 \sup_{(t,y) \in [0,T_1]\times \Rd}|\nabla_y \g_n(t,y)-\nabla_y \g(t,y)| \sup_{\tau \in (0, \Gamma_{m}^{z}(t, x)]} |\Lambda_{n}(t, x, \tau)|\\
&\ \ \ \ + \rho_1 \sup_{(t,y) \in [0,T_1]\times \Rd}|\nabla_y \g_m(t,y)-\nabla_y \g(t,y)| \sup_{\tau \in (0,  \Gamma_{m}^{z}(t, x)]} |\Lambda_{n}(t, x, \tau)| \\
&\ \ \ \ +   \rho_1  \sup_{(t,x) \in [0,T_1]\times \Rd , \atop  \tau \in (-\rho_1,\rho_1)}   | \nabla_y   \g(t,y_n(t, x, \tau))   -    \nabla_y    \g(t,y_{m}(t, x, \tau))|   \sup_{\tau \in (0, \Gamma_{m}^{z}(t, x)   ]}  |\Lambda_{n}(t, x, \tau)|.
\end{split}
\end{eqnarray}
Hence by Proposition \ref{P3.3}, (\ref{2.29}), (\ref{2.31}), (\ref{3.18}) and (\ref{3.42}),
\begin{eqnarray}  \label{3.41}  \nonumber
\begin{split}
& \ \ \ \ \|  \Lambda_{n}(t, x, \Gamma_{m}^{z}(t, x))-\Lambda_m(t, x, \Gamma_{m}^{z}(t, x)) \|_{L^{2d+2}(\O_{\e})}\\
&\les  \|     \int_0^{\Gamma_{m}^{z}(t, x)} | (\partial_{t} \g_n) (t, y_{n}(t, x, \tau)) - (\partial_{t} \g_m) (t, y_{m}(t, x, \tau))| d \tau  \|_{L^{2d+2}(  \O_{\e}  )}  \\
&\ \ \ \ +  \sup_{(t,y) \in [0,T_1]\times \Rd}|\nabla_y \g_n(t,y)-\nabla_y \g(t,y)| \| \partial_{t} \g_{n}   \|_{L^{2d+2}(\O'_{\e})} \\
&\ \ \ \ +  \sup_{(t,y) \in [0,T_1]\times \Rd}|\nabla_y \g_m(t,y)-\nabla_y \g(t,y)| \| \partial_{t} \g_{n}    \|_{L^{2d+2}(\O'_{\e})} \\
&\ \ \ \ +    \sup_{(t,x) \in [0,T_1]\times \Rd, \atop \tau \in (-\rho_1,\rho_1)}| \nabla_y \g(t,y_n(t, x, \tau))-\nabla_y \g(t,y_{m}(t, x, \tau))|  \| \partial_{t} \g_{n}    \|_{L^{2d+2}(\O'_{\e})} \\
&\les \|   \int_0^{\Gamma_{m}^{z}(t, x)}  | (\partial_{t} \g_n) (t, y_{n}(t, x, \tau)) - (\partial_{t} \g_m) (t, y_{m}(t, x, \tau))| d \tau \|_{L^{2d+2}(  \O_{\e}  )} \\
&\ \ \ \ +  \sup_{(t,y) \in [0,T_1]\times \Rd}|\nabla_y \g_n(t,y)-\nabla_y \g(t,y)|+  \sup_{(t,y) \in [0,T_1]\times \Rd}|\nabla_y \g_m(t,y)-\nabla_y \g(t,y)|\\
&\ \ \ \ +    \sup_{(t,x) \in [0,T_1]\times \Rd, \atop \tau \in (-\rho_1,\rho_1)}| \nabla_y \g(t,y_n(t, x, \tau))-\nabla_y \g(t,y_{m}(t, x, \tau))| \to 0, \\
\end{split}
\end{eqnarray}
as $n,m \to \infty$.

\vskip 0.3cm

Now we show (\ref{4.51}), $i.e.$
\begin{eqnarray}
\lim_{n,m \to \infty} \| \Lambda_n (t, x, \G_{n}^{z}(t, x) )  -\Lambda_m (t, x, \G_{m}^{z}(t, x) ) \|_{L^{2d+2}(\O_{\e})}=0. \label{2-18-1}
\end{eqnarray}
By (\ref{2-17-3}), to prove (\ref{2-18-1}), we need only to prove that
\begin{eqnarray} \label{2-18-2}  \nonumber
\lim_{n,m \to \infty} \| \Lambda_n (t, x, \G_{n}^{z}(t, x) )  -\Lambda_n (t, x, \G_{m}^{z}(t, x) ) \|_{L^{2d+2}(\O_{\e})}=0.
\end{eqnarray}
By the definition of $\Lambda_n (t,x,r)$, (\ref{3.17}), (\ref{3.45}) and (\ref{3.18}), we have
\begin{eqnarray} \nonumber
\begin{split}
& \ \ \ \ \ \|  \Lambda_n (t, x, \G_{n}^{z}(t, x) )  -\Lambda_n (t, x, \G_{m}^{z}(t, x) ) \|_{L^{2d+2}(\O_{\e})} \\
&=\| \int_{\G_{n}^{z}(t, x)}^{\G_{m}^{z}(t, x)}  ( (\partial_{t} \g_{n} ) (t, y_{n}(t, x, r) )+\partial_{y} \g_{n} (t, y_{n}(t, x, r) )\Lambda_{n}(t, x, r) )dr  \|_{L^{2d+2}(\O_{\e})}\\
&\les \| \int_{\G_{n}^{z}(t, x)}^{\G_{m}^{z}(t, x)}  |(\partial_{t} \g_{n} ) (t, y_{n}(t, x, r) ) |dr  \|_{L^{2d+2}(\O_{\e})}  \\
& \ \ \ \ \ +\| \sup_{r \in  (\Gamma_{n}^{z}(t, x), \Gamma_{m}^{z}(t, x)] } |\Lambda_{n}(t, x, r) |  \|_{L^{2d+2}(\O_{\e})}  \sup_{(t,x)\in \O_{\e}} | \G_{n}^{z}(t, x)-\G_{m}^{z}(t, x) | \to 0,
\end{split}
\end{eqnarray}
as $n,m \to \infty$.

\vskip 0.3cm

Finally we show (\ref{4.52}), $i.e.$
\begin{eqnarray} \label{3.76}
\lim_{n,m\to \infty} \|\p_t\G_n^z(t,x)-\p_t\G_m^z(t,x)\|_{L^{2d+2}(\O_{\e})}=0.
\end{eqnarray}
By (\ref{3.14}) and (\ref{3.13}), we have for $(t,x)\in \O_{\e}$ and $n,m \ge N_1$,
\begin{eqnarray} \label{3.27}   \nonumber
\begin{split}
&\ \ \ \ |\p_t\G_n^z(t,x)-\p_t\G_m^z(t,x)|\\
&\le|\frac{\Lambda_n (t,x,\G^z_n(t,x)) - \Lambda_m(t,x,\G^z_m(t,x))}{\g_n(t,y_n(t,x,\G_n^z(t,x)))\c \g(t_0,z)}|\\
&\ \ \ \ +|(\frac{1}{\g_n(t,y_n(t,x,\G_n^z(t,x)))\c \g(t_0,z)}-\frac{1}{\g_m(t,y_m(t,x,\G_m^z(t,x)))\c \g(t_0,z)}) \Lambda_m(t,x,\G^z_m(t,x))|\\
&\le \cos ^{-1} \theta  |\Lambda_{n} (t, x, \Gamma_{n}^{z}(t, x) )-\Lambda_{m} (t, x, \Gamma_{m}^{z}(t, x) ) | \\
& \ \ \ \ + \cos ^{-2}\theta | \g_n(t,y_n(t,x,\G_n^z(t,x)))-\g_m(t,y_m(t,x,\G_m^z(t,x))) |   | \Lambda_{m}(t,x,\G^z_m(t,x))| \\
& \les  |\Lambda_{n} (t, x, \Gamma_{n}^{z}(t, x) )-\Lambda_{m} (t, x, \Gamma_{m}^{z}(t, x) ) | \\
& \ \ \ \ + |\Lambda_{m} (t,x,\G^z_m(t,x))| \sup_{(t,x)\in \O_{\e}}  | \g_n(t,y_n(t,x,\G^z_n(t,x))) - \g_m(t,y_m(t,x,\G^z_m(t,x)))|.
\end{split}
\end{eqnarray}
So together with (\ref{2-17-2}), (\ref{2.29}), (\ref{3.17}), (\ref{7.2.1}) and (\ref{2-18-1}), we obtain (\ref{3.76}).
\hfill $\blacksquare$

\vskip 0.4cm

\vskip 0.4cm

\noindent{\bf  Acknowledgement.}\   This work is partly
supported by National Natural Science Foundation of China (No. 11671372, No. 11971456,  No. 11721101).

\end{document}